\newtheorem{theorem}{Theorem}[section]
\newtheorem{corollary}{Corollary}
\newtheorem{lemma}[theorem]{Lemma}
\newtheorem{proposition}{Proposition}
\theoremstyle{definition}
\newtheorem{remark}{Remark}
\newcommand{\Margin}[1]{}
\newcommand{\revision}[1]{{#1}}
\let\oldref\ref
\newbox\figurelegendone
\newbox\figurelegendtwo
\newbox\figureone
\newbox\figuretwo
\newbox\figurethree
\newbox\figurefour
\newbox\figurefive
\newbox\textone
\newbox\texttwo
\newdimen\xfiglen \newdimen\yfiglen  \newdimen\textlen
\title[Nonlinearity coefficient identification]
      {On the identification of the nonlinearity parameter in the Westervelt equation\\ from boundary measurements}
\author[Barbara Kaltenbacher and William Rundell]{}
\subjclass{Primary: 35R30, 35K58, 35L72; Secondary:  78A46.}
 \keywords{Coefficient identification, Westervelt equation, ultrasound imaging,
nonlinearity parameter, nonlinear acoustics}
\thanks{Supported by the Austrian Science Fund {\sc fwf} under grant P30054 and the
National Science Foundation through award {\sc dms}-1620138}
\thanks{$^*$ Corresponding author: Barbara Kaltenbacher}
\begin{document}
\maketitle

\centerline{\scshape Barbara Kaltenbacher$^*$}
\medskip
{\footnotesize
 \centerline{Department of Mathematics}
 \centerline{Alpen-Adria-Universit\"at Klagenfurt}
   \centerline{9020 Klagenfurt, Austria}
}

\medskip

\centerline{\scshape William Rundell }
\medskip
{\footnotesize
 \centerline{Department of Mathematics}
\centerline{Texas A\&M University}
   \centerline{Texas 77843, USA}
}

\bigskip

 \centerline{(Communicated by Habib Ammari)}


\begin{abstract}
We consider an undetermined coefficient inverse problem for a non-\\linear partial
differential equation occurring in high intensity ultrasound propagation
as used in acoustic tomography.
In particular, we investigate the recovery of the nonlinearity coefficient
commonly labeled as $B/A$ in the literature which is part of a space
dependent coefficient $\kappa$ in the  Westervelt equation governing nonlinear
acoustics.
Corresponding to the typical measurement setup, the overposed data consists
of time trace measurements on some zero or one dimensional set $\Sigma$
representing the receiving transducer array.
After an analysis of the map from $\kappa$ to the overposed data, we show
injectivity of its linearisation and use this as motivation for several
iterative schemes to recover $\kappa$.
Numerical simulations will also be shown to illustrate the efficiency of the
methods.
\end{abstract}

\section{Introduction}
The use of ultrasound is well established in the imaging of human tissue.
High intensity ultrasound is modeled by nonlinear wave equations, in which
a certain ratio of Taylor expansion coefficients $B/A$ governs the nonlinearity.
Recently, it has been shown that this $B/A$ parameter is sensitive to
differences in the tissue properties,
thus appropriate for characterization of biological tissues, see, e.g.,
\cite{Bjorno1986, BurovGurinovichRudenkoTagunov1994, Cain1986,
IchidaSatoLinzer1983, VarrayBassetTortoliCachard2011,
ZhangChenGong2001, ZhangChenYe1996}.
Therefore, when viewed as a spatially varying coefficient, it can be used for
medical imaging purposes, known as  acoustic nonlinearity parameter tomography.
This parameter appears in the {\sc pde}s describing high intensity ultrasound
propagation, thus the related imaging problem becomes a coefficient
identification for these {\sc pde}s.

We will therefore give a brief introduction into the relevant
models and highlight the one  of our focus:
namely the Westervelt equation.
Then we will specify the possible physical measurements leading
to overposed data and state the resulting inverse problem.

\subsubsection*{The model}
\revision{For a brief derivation of the fundamental acoustic equations, we refer to, e.g., the review \cite{reviewNonlinearAcoustics}.
More details can be found, for example, in \cite{Crighton79, HamiltonBlackstock98}.
\\
\indent The main physical quantities involved in the description of
sound propagation are
\begin{itemize}
\item the acoustic particle velocity $\vec{v}$;
\item the acoustic pressure $p$;
\item the mass density $\varrho$;
\end{itemize}
that can be decomposed into their constant mean and a fluctuating part
\[
\vec{v}=\vec{v}_0+\vec{v}_\sim\,, \quad p=p_0+p_\sim\,, \quad \varrho=\varrho_0+\varrho_\sim,
\]
where $\vec{v}_0=0$ in the absence of a flow.
\\
\indent These quantities are interrelated by balance equations of momentum and mass, as well as the state equation relating the acoustic pressure and density fluctuations $p_\sim$ and $\varrho_\sim$. The latter contains a so-called parameter of nonlinearity $B/A$.
Combining these balance laws, inserting the state equation and and dropping certain terms according to a certain hierarchy, which in nonlinear acoustics is known as Blackstock's scheme \cite{Lighthill56,Blackstock63} one arrives at second order wave equations in terms of the space- and time dependent fluctuating quantities.
\\
\indent This first of all yields Kuznetsov's equation \cite{LesserSeebass68,Kuznetsov71}
\begin{equation}\label{Kuznetsov}
{p_\sim}_{tt}- c^2\triangle p_\sim - b \triangle {p_\sim}_t =
\left(\frac{1}{\varrho_0 c^2}\frac{B}{2A} p_\sim^2 + \varrho_0 |\vec{v}|^2\right)_{tt}
\end{equation}
where $b$ is the diffusivity of sound,
and we have related the velocity to the pressure via the linearised force balance
\begin{equation}\label{eq:forcebal}
\varrho_0\vec{v}_t=-\nabla p_\sim\,.
\end{equation}
}
\Margin{rep C, 2.}

If we ignore local nonlinear effects modeled by the quadratic velocity term, thus approximating
$\rho_0|\vec{v}|^2_{tt}\approx \frac{1}{\rho_0^2c^2}p^2_{tt}$,
we arrive at the Westervelt equation
\begin{equation}\label{Westervelt}
{p_\sim}_{tt}- c^2\triangle p_\sim - b \triangle {p_\sim}_t =
\frac{\beta_a}{\varrho_0 c^2} {p_\sim^2}_{tt}
\end{equation}
with $\beta_a = 1 + B/(2A)$, cf.,
\cite{Westervelt63}.
Under the already made assumption $\nabla\times\vec{v}=0$ on a simply connected domain there exists an acoustic velocity potential $\psi$ with $\vec{v}=-\nabla\psi$, whose constant part by \eqref{eq:forcebal} can be chosen such that
\begin{equation}\label{prhopsitp}
\varrho_0\psi_t= p\,.
\end{equation}
Hence both equations \eqref{Kuznetsov} and \eqref{Westervelt} can also
be written in terms of the  acoustic velocity potential $\psi$
\begin{equation}\label{WestKuz_psi}
\psi_{tt}-c^2\triangle\psi-b\triangle\psi_t=\frac{1}{c^2}\Bigl(\beta_a(\psi_t)^2
+s_{\mbox{\footnotesize WK}} \left[c^2|\nabla\psi|^2-(\psi_t)^2\right]\Bigr)_t
\end{equation}
with $s_{\mbox{\footnotesize WK}}=0$ for \eqref{Westervelt} and $s_{\mbox{\footnotesize WK}}=1$ for \eqref{Kuznetsov}.

\revision{
Further simplifications of the model lead to the Khokhlov-Zabolotskaya-Kuznet-sov (KZK) equation \cite{ZabolotskayaKhokhlov69}
and the well-known Burgers' equation in one space dimension \cite{Burgers74,Evansbuch,Ockendon06}.
}
\Margin{rep C, 2.}

We mention in passing that there exist also more complex and higher order models. Since it takes into account most of the relevant physical effects, the Westervelt equation appears to be the best established model in the physics and engineering literature of nonlinear acoustics. We will therefore also adhere to this model here.

\subsubsection*{The inverse problem}
The imaging task under consideration consists of identifying $\kappa=\kappa(x)$ in the Westervelt equation in pressure formulation (skipping the subscript $\sim$)
\begin{equation}\label{eqn:Westervelt_p}
p_{tt}-c^2\triangle p - b\triangle p_t = \kappa(x)(p^2)_{tt} + r(x,t)
\end{equation}
($\kappa(x)= \frac{1}{\rho_0c^2} \beta_a(x)$) or in velocity potential formulation
\begin{equation}\label{eqn:Westervelt_psi}
\psi_{tt}-c^2\triangle \psi - b\triangle \psi_t = \kappa(x)(\psi_t^2)_t + r(x,t)
\end{equation}
($\kappa(x)= \frac{1}{c^2} \beta_a(x)$)
where $r$ is a given excitation function and pressure and velocity potential are related by
\eqref{prhopsitp}.

The system will typically be at rest initially, leading to homogeneous initial conditions on $p$, $p_t$ or $\psi$, $\psi_t$, respectively.
The spatial domain $\Omega\subseteq\mathbb{R}^d$, $d\in\{1,2,3\}$ on which the {\sc pde}s are supposed to hold will be assumed to be smooth and bounded and the Laplacian equipped with boundary conditions on $\partial\Omega$. For simplicity one might think of homogeneous Dirichlet ones here but note that also other boundary conditions -- in-homogeneous Neumann for modeling excitation by a transducer array, absorbing or impedance conditions for modeling damping or reflections at the boundary -- are relevant in this context. Excitation will here be modeled by an interior space and time dependent source term $r$, see also \cite{periodicWestervelt}.
\revision{
Since the actuating piezoelectric transducers are typically arranged in an array, that is, a surface $\Gamma$ lying in the interior of the computational domain, modeling excitation by means of interior sources appears to be justified as follows.
We can consider $r$ as an approximation of a source $q\cdot\delta_\Gamma$  concentrated on $\Gamma$, with $q=(c^2\tilde{r}+b\tilde{r}_t)$, (that is, $\tilde{r}(t)= \frac{1}{b}\int_0^t\exp(-\frac{c^2}{b}(t-s))\tilde{r}(s)\, ds$, in view of the fact that formally
\begin{eqnarray}
&&\left\{\begin{array}{r} p_{tt}-c^2\triangle p - b\triangle p_t - \kappa(x)(p^2)_{tt} =0\mbox{ in }\Omega\\ p=0\mbox{ on }\partial\Omega\\
{}[[\partial_\nu p]]=\tilde{r}\mbox{ on }\Gamma\end{array}\right.
\label{eqn:interiorsource}\\
&&\Leftrightarrow \
\int_\Omega
\Bigl((p_{tt}- \kappa(x)(p^2)_{tt})v + (c^2\nabla p+b\nabla p_t)\cdot\nabla v\Bigr)\, dx = \int_{\Gamma} (c^2\tilde{r}+b\tilde{r}_t) v\, ds
\nonumber\\
&& \hspace*{7cm}\mbox{ for all } v\in H_0^1(\Omega)
\nonumber\\
&&\Leftrightarrow \
\left\{\begin{array}{r} p_{tt}-c^2\triangle p - b\triangle p_t - \kappa(x)(p^2)_{tt} =(c^2\tilde{r}+b\tilde{r}_t) \delta_{\Gamma}\mbox{ in }\Omega\\ p =0\mbox{ on }\partial\Omega\end{array}\right.
\nonumber
\end{eqnarray}
where $[[\partial_\nu p]]$ denotes the jump of the normal derivative of $p$ over the interface $\Gamma$.
With $\tilde{r}/\rho_0$ being the acceleration of the transducer in normal direction, via the force balance \eqref{eq:forcebal}, the jump condition in \eqref{eqn:interiorsource} after time integration corresponds to a matching of velocities at the solid-fluid interface.
}
\Margin{rep C, 1.}

As a counterpart to this imposed excitation, measurements of the acoustic pressure at an array of transducers or hydrophones are made.
Thus the overposed data consists of time trace measurements at some point $x_0$ or on some surface $\Sigma$ contained in $\overline{\Omega}$, \revision{see also Figure \ref{fig:Sigma} below}
\Margin{rep M, 1.}
\begin{equation}\label{eqn:obs}
g(t)=p(x_0,t) \quad \mbox{ or } \quad g(x,t)=p(x,t) \,,\ x\in\Sigma\,, t\in(0,T).
\end{equation}

Thus the pressure formulation \eqref{eqn:Westervelt_p} appears to be the more direct one in the sense that the observations are just point evaluations of the state, whereas \eqref{eqn:Westervelt_psi} would require time integration of the data in order to relate state and observations via \eqref{prhopsitp}. Moreover the quadratic nonlinearity comes in terms of a higher derivative in \eqref{eqn:Westervelt_psi} than in \eqref{eqn:Westervelt_p}, which makes analysis and numerics slightly more involved.
Henceforth we will focus on the pressure formulation of the Westervelt equation \eqref{eqn:Westervelt_p}.

\revision{Thus the inverse problem under consideration here is the following.
Identify $\kappa=\kappa(x)$ in
\[
\begin{array}{rcll} p_{tt}-c^2\triangle p - b\triangle p_t &=& \kappa(x)(p^2)_{tt} + r&\mbox{ in }\Omega\times(0,T)\\
p &=&0&\mbox{ on }\partial\Omega\times(0,T)\\
p(x,0)=p_t(x,0)&=&0&x\in\Omega
\end{array}
\]
from observations \eqref{eqn:obs}. Here, $c^2$, $b$, and $r=r(x,t)$ are known constants and functions, respectively.
}


Since this paper is a first step into the mathematics of this inverse problem
we provide some background information.
Thus in Section \oldref{sec:analysis_forward}, we give some regularity results
on the forward operator.
Section \oldref{sec:injectivity} deals with uniqueness.
Note that much of the results from recovery of space dependent coefficients
from time trace data in
hyperbolic or parabolic {\sc pde}s, see, e.g., \cite{Isakov:2006} and the
references therein, do not apply here.
These are the main differences:
first, the wave equations appearing as models are strongly damped and therefore
behave to some extent like a parabolic {\sc pde}, see Remark \oldref{rem:parabolic} below;
second, the reformulation as parabolic {\sc pde} contains a nonlocal in time term due to the integration operator;
third is the fact that the base pde is nonlinear and furthermore the
 coefficient  $\kappa$ to be determined is intrinsically coupled to this
nonlinearity.
Thus, as we will see, the statement
``but linear inverse equations behave even better'' certainly holds true.
We also show injectivity of the linearised inverse problem at $\kappa=0$
\revision{in case the excitation is chosen as an appropriate combination
of products of a space and time dependent functions.}
\Margin{rep C, 4.}
In this section we will also comment on ill-posedness of the problem,
which is expected to be exponential.
In Section \oldref{sec:regmeth} we discuss some of the classical regularisation
paradigms in the context of the nonlinearity imaging task.
Section \oldref{sec:reconstructions} is devoted to numerical experiments
demonstrating the above iterative reconstruction methods.

We mention that an inverse source problem related to the linearisation of this
inverse problem with a higher order model of nonlinear acoustics has recently
been considered in \cite{YamamotoBK20}.

\revision{Moreover, we point to \cite{ImanuvilovYamamoto:2019}, where in case of different observations, namely data $p(x, t_0)$ with some $t_0 > 0$, global Lipschitz stability is shown even in the more complicated setting of the Lam\'{e} operator replacing the Laplacian for the corresponding linearised inverse problem (but probably extendable to the nonlinear setting).}
\Margin{rep M, 2.}

\def\Honedot{\dot{H}^1(\Omega)} 
\def\Htwodot{\dot{H}^2(\Omega)} 
\section{Analysis of the forward problem}\label{sec:analysis_forward}

We consider the operator $F:\mathcal{D}(F)\to Y$, $F=\mbox{tr}_\Sigma\circ G$,
$G(\kappa)=p$, where $p$ solves \eqref{eqn:Westervelt_p} with homogeneous
and linear boundary conditions
-- which we will assume to hold for all equations appearing in this section
without explicitly mentioning it, thus we will write $\mathcal{A}$ for
$-\triangle$ (or more generally a second order elliptic differential operator)
equipped with these boundary conditions on a sufficiently smooth boundary
$\partial\Omega$ so that for $s\in[0,2]$,
$\dot{H}^s(\Omega):=\mathcal{D}(\mathcal{A}^{s/2})\subseteq H^s(\Omega)$
with continuous embedding and equivalent norms.
This includes, e.g., the Laplacian with Dirichlet
($\dot{H}^1(\Omega)=H_0^1(\Omega)$) or impedance
($\dot{H}^1(\Omega)=H^1(\Omega)$) boundary conditions,
but excludes pure Neumann conditions.

\subsection{Well-definedness and Fr\'{e}chet differentiability}\label{sec:frechet}

For taking differences between $\tilde{p}=G(\tilde{\kappa})$ and $p=G(\kappa)$ the following identity on the quadratic nonlinearity will be useful.
\[
\begin{aligned}
\tilde{\kappa}(\tilde{p}^2)_{tt}-\kappa(p^2)_{tt}
=& 2\tilde{\kappa}\tilde{p}\tilde{p}_{tt}-2\kappa p p_{tt} + 2 \tilde{\kappa}\tilde{p}_t^2-2\kappa p_t^2 \\
=& 2\underline{d\kappa}\tilde{p}\tilde{p}_{tt} + 2\kappa v\tilde{p}_{tt} + 2\kappa p v_{tt}
+ 2\underline{d\kappa}\,\tilde{p}_t^2 + 2 \kappa (\tilde{p}_t+ p_t) v_t
\end{aligned}
\]
where $\underline{d\kappa}=\tilde{\kappa}-\kappa$, $v = \tilde{p}-p$.
This implies that $v=G(\tilde{\kappa})-G(\kappa)$ solves
\begin{equation}\label{eqn:v}
(1-2\kappa p)v_{tt}+c^2\mathcal{A} v +b\mathcal{A} v_t - 2 \kappa (\tilde{p}_t+ p_t)\, v_t - 2\kappa \tilde{p}_{tt} \, v
= 2\underline{d\kappa}(\tilde{p}\tilde{p}_{tt} + \tilde{p}_t^2)\,,
\end{equation}
and the (so far formal) linearisation $z=G'(\kappa)\underline{d\kappa}$ solves
\begin{equation}\label{eqn:z}
(1-2\kappa p)z_{tt}+c^2\mathcal{A} z +b\mathcal{A} z_t - 4 \kappa p_t\, z_t - 2\kappa p_{tt} \, z
= 2\underline{d\kappa}(p p_{tt} + p_t^2)\,,
\end{equation}
so that the first order Taylor remainder $w=G(\tilde{\kappa})-G(\kappa)-G'(\kappa)(\tilde{\kappa}-\kappa)$ satisfies
\begin{equation}\label{eqn:w}
\begin{aligned}
(1-2\kappa p)w_{tt}&+c^2\mathcal{A} w +b\mathcal{A} w_t - 4 \kappa p_t\, w_t - 2\kappa p_{tt} \, w\\
&= 2\underline{d\kappa}(v \tilde{p}_{tt} + p v_{tt} + (\tilde{p}_t+p_t) v_t ) + 2\kappa (v v_{tt} + v_t^2) \,,
\end{aligned}
\end{equation}
in all three cases with homogeneous initial and boundary conditions.
Therefore the following lemma will be useful for estimating these differences
and establishing Fr\'{e}chet differentiability of the forward map.
Note that the existing energy estimates for the linearised version of the Westervelt
equation provide higher regularity and exponential decay, but their transfer to
space dependent parameters would also involve derivatives of the coefficients and
in particular of $\kappa$. However, we wish to allow $\kappa$ to be discontinuous
(e.g., piecewise constant of piecewise continuous) as relevant for the underlying
imaging task in order to reproduce sharp interfaces between tissue with different
properties. Thus the use of some $L^p$ space for $\kappa$ is essential.
On the other hand, as the estimates below show, actually $L^\infty$ appears to be
the minimal requirement on $\kappa$.

In order to handle both well-definedness and differentiability of $G$, we first of all establish some energy estimates for a linear version of the Westervelt equation.
\revision{
In the estimates below we will make use of continuity of the embeddings of $\Htwodot\to L^\infty(\Omega)$ and $\Honedot\to L^6(\Omega)$, more precisely
\begin{equation}\label{eqn:C1C2}
\begin{aligned}
&\|\phi\|_{L^6(\Omega)}\leq C_1 \|\mathcal{A}^{1/2}\phi\|_{L^2(\Omega)} \,, \quad \phi \in \mathcal{D}(\mathcal{A}^{1/2})=\Honedot  \\
&\|\phi\|_{L^\infty(\Omega)}\leq C_2 \|\mathcal{A}\phi\|_{L^2(\Omega)} \,, \quad \phi \in \mathcal{D}(\mathcal{A})=\Htwodot  \,.
\end{aligned}
\end{equation}
Moreover, we will impose smallness of a certain combination of the coefficients $\alpha,\gamma$
\begin{equation}\label{eqn:smallness}
\bar{\gamma}:= \|\tfrac{d}{dt}\ln(\tfrac{\gamma}{\alpha})\|_{L^1(0,T;L^\infty(\Omega))} <\tfrac{1}{16}
\end{equation}
Note that since these may contain common physical coefficients,  the quotient may enable some cancellations.
}
\Margin{rep C, 3.}
\begin{lemma}\label{lem:enest}
For $\alpha,\beta,\gamma\in L^\infty(0,T;L^\infty(\Omega))$, $\alpha,\beta,\gamma>0$, $\delta\in L^\infty(0,T;L^3(\Omega))$,  $\mu\in L^\infty(0,T;L^2(\Omega))$,
\revision{$\ln(\tfrac{\gamma}{\alpha}) \in W^{1,1}(0,T;L^\infty(\Omega))$
with \eqref{eqn:smallness}}
any solution $u$ to
\begin{equation}\label{eqn:linWest}
\alpha u_{tt}+\beta\mathcal{A} u_t + \gamma \mathcal{A} u + \delta u_t + \mu u = f
\end{equation}
satisfies the estimates
\begin{equation}
\|\mathcal{A}^{1/2}u_t\|_{L^\infty(0,t;L^2(\Omega))}^2
+ \|\sqrt{\tfrac{\beta}{\alpha}}\mathcal{A}u_t\|_{L^2(0,t;L^2(\Omega))}^2
(1-16\bar{\gamma})\|\sqrt{\tfrac{\gamma}{\alpha}}\mathcal{A}u\|_{L^\infty(0,t;L^2(\Omega))}^2\label{eqn:enest}
\end{equation}
\begin{equation*}
\begin{aligned}
&\leq 8\Bigl(
\|\mathcal{A}^{1/2}u_t(0)\|_{L^2(\Omega)}^2 + \|\sqrt{\tfrac{\gamma(0)}{\alpha(0)}}\mathcal{A}u(0)\|_{L^2(\Omega)}^2
+\|\tfrac{1}{\alpha\beta}\|_{L^\infty(0,t;L^\infty(\Omega))} \|f\|_{L^2(0,t;L^2(\Omega))}^2\\
&+ C_1^2\|\tfrac{\delta}{\sqrt{\alpha\beta}}\|_{L^\infty(0,t;L^{3}(\Omega))}^2  \|\mathcal{A}^{1/2}u_t\|_{L^2(0,t;L^2(\Omega))}^2\\
&+C_2^2\|\tfrac{\mu}{\sqrt{\alpha\beta}}\|_{L^\infty(0,t;L^2(\Omega))}^2\|\mathcal{A}u\|_{L^2(0,t;L^2(\Omega))}^2\Bigr)\,,
\end{aligned}
\end{equation*}
\begin{equation}
\|u_{tt}\|_{L^2(0,t;L^2(\Omega))} \
\leq
\|\tfrac{1}{\alpha}\|_{L^\infty(0,t;L^\infty(\Omega))} \|f\|_{L^2(0,t;L^2(\Omega))}\label{eqn:enest_utt} \qquad \qquad \qquad \qquad
\end{equation}
\begin{equation*}
\begin{aligned}
&+ \|\tfrac{\beta}{\alpha}\|_{L^\infty(0,t;L^\infty(\Omega))}^{1/2}
\|\sqrt{\tfrac{\beta}{\alpha}}\mathcal{A}u_t\|_{L^2(0,t;L^2(\Omega))}\\
&+ \|\tfrac{\gamma}{\alpha}\|_{L^\infty(0,t;L^\infty(\Omega))}^{1/2} \sqrt{T}
\|\sqrt{\tfrac{\gamma}{\alpha}}\mathcal{A}u\|_{L^\infty(0,t;L^2(\Omega))}\\
&+ C_1 \|\tfrac{\delta}{\alpha}\|_{L^2(0,t;L^3(\Omega))} \|\mathcal{A}^{1/2}u_t\|_{L^\infty(0,t;L^2(\Omega))} + C_2 \|\tfrac{\mu}{\alpha}\|_{L^\infty(0,t;L^2(\Omega))}^2  \|\mathcal{A}u\|_{L^2(0,t;L^2(\Omega))}\,,
\end{aligned}
\end{equation*}
\revision{with $C_1$, $C_2$ as in \eqref{eqn:C1C2}.}
\end{lemma}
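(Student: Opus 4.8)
The plan is to treat both bounds as a priori higher-order energy estimates. Rigorously these are obtained from a Galerkin discretisation in the eigenbasis of $\mathcal{A}$, in which the test function $\mathcal{A}u_t$ used below is admissible and all manipulations are legitimate; the stated inequalities then survive the passage to the limit by weak lower semicontinuity of the norms. Since every weight in \eqref{eqn:enest}--\eqref{eqn:enest_utt} appears through the quotients $\beta/\alpha$, $\gamma/\alpha$, I would first divide \eqref{eqn:linWest} by the pointwise positive, bounded coefficient $\alpha$ to get $u_{tt}+\tfrac{\beta}{\alpha}\mathcal{A} u_t+\tfrac{\gamma}{\alpha}\mathcal{A} u+\tfrac{\delta}{\alpha}u_t+\tfrac{\mu}{\alpha}u=\tfrac{f}{\alpha}$, take the $L^2(\Omega)$ inner product with $\mathcal{A}u_t$, and integrate over $(0,\tau)$ for an arbitrary $\tau\le t$.

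For the principal terms, self-adjointness of $\mathcal{A}$ and $\mathcal{A}u_t=(\mathcal{A}u)_t$ give $\langle u_{tt},\mathcal{A}u_t\rangle=\tfrac12\tfrac{d}{dt}\|\mathcal{A}^{1/2}u_t\|_{L^2}^2$ and $\langle\tfrac{\beta}{\alpha}\mathcal{A}u_t,\mathcal{A}u_t\rangle=\|\sqrt{\beta/\alpha}\,\mathcal{A}u_t\|_{L^2}^2$, producing the first two left-hand terms of \eqref{eqn:enest}. The stiffness term is the delicate one: because $\gamma/\alpha$ depends on time, $\langle\tfrac{\gamma}{\alpha}\mathcal{A}u,(\mathcal{A}u)_t\rangle=\tfrac12\tfrac{d}{dt}\|\sqrt{\gamma/\alpha}\,\mathcal{A}u\|_{L^2}^2-\tfrac12\langle(\tfrac{\gamma}{\alpha})_t\mathcal{A}u,\mathcal{A}u\rangle$, and I would rewrite the correction via $(\tfrac{\gamma}{\alpha})_t=\tfrac{\gamma}{\alpha}\tfrac{d}{dt}\ln(\tfrac{\gamma}{\alpha})$ so that, after integrating in time and taking the supremum over $\tau$, it is bounded by $\bar\gamma\,\|\sqrt{\gamma/\alpha}\,\mathcal{A}u\|_{L^\infty(0,t;L^2)}^2$ with $\bar\gamma$ as in \eqref{eqn:smallness}. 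Moving this to the left is exactly what creates the coefficient $(1-16\bar\gamma)$ and forces the smallness hypothesis $\bar\gamma<\tfrac1{16}$. The remaining terms $\tfrac{\delta}{\alpha}u_t$, $\tfrac{\mu}{\alpha}u$ and $\tfrac{f}{\alpha}$, all paired against $\mathcal{A}u_t$, are split by Young's inequality into a small multiple of the dissipation $\|\sqrt{\beta/\alpha}\,\mathcal{A}u_t\|_{L^2}^2$ (absorbed on the left) plus remainders, the latter estimated by Hölder in space using \eqref{eqn:C1C2}: $\|u_t\|_{L^6}\le C_1\|\mathcal{A}^{1/2}u_t\|_{L^2}$ for the $\delta$-term (as $\delta\in L^3$) and $\|u\|_{L^\infty}\le C_2\|\mathcal{A}u\|_{L^2}$ for the $\mu$-term (as $\mu\in L^2$). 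Collecting everything, absorbing the dissipation contributions, and taking suprema in $\tau$ gives \eqref{eqn:enest}, with the constant $8$ being the cumulative bookkeeping factor from the Young splittings and from bounding the three nonnegative left-hand quantities by a common right-hand side.

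The bound \eqref{eqn:enest_utt} is then nearly immediate: I would solve the divided equation for $u_{tt}$ and take the $L^2(0,t;L^2(\Omega))$ norm, so that the triangle inequality produces five contributions. The source gives $\|\tfrac1\alpha\|_{L^\infty}\|f\|_{L^2}$; the terms $\tfrac{\beta}{\alpha}\mathcal{A}u_t$ and $\tfrac{\gamma}{\alpha}\mathcal{A}u$ are handled by factoring out $\|\tfrac{\beta}{\alpha}\|_{L^\infty}^{1/2}$, respectively $\|\tfrac{\gamma}{\alpha}\|_{L^\infty}^{1/2}$, leaving precisely the weighted norms already controlled in \eqref{eqn:enest} (the factor $\sqrt{T}$ appearing when passing from $L^2$ to $L^\infty$ in time for the $\gamma$-term); and the two lower-order terms are again bounded by Hölder together with \eqref{eqn:C1C2}.

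The only genuinely delicate point in the whole argument is the time dependence of $\gamma/\alpha$ in the stiffness term, which obstructs a clean energy identity; controlling the ensuing correction is exactly where the assumptions $\ln(\tfrac{\gamma}{\alpha})\in W^{1,1}(0,T;L^\infty(\Omega))$ and \eqref{eqn:smallness} enter, whereas everything else reduces to careful applications of Young's and Hölder's inequalities and the embeddings \eqref{eqn:C1C2}.
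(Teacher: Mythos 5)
Your proposal is correct and follows essentially the same route as the paper: divide by $\alpha$, test with $\mathcal{A}u_t$, handle the time-dependent stiffness weight via $\tfrac{\alpha}{\gamma}\bigl[\tfrac{\gamma}{\alpha}\bigr]_t=\tfrac{d}{dt}\ln(\tfrac{\gamma}{\alpha})$ so that \eqref{eqn:smallness} lets you absorb the correction into the left-hand side, use Young's inequality to absorb half the dissipation and H\"older with \eqref{eqn:C1C2} for the $\delta$- and $\mu$-terms, take suprema in time (the paper obtains the factor $8=2^3$ precisely from $\sup_t\sum_{i=1}^3 a_i(t)\geq 2^{-3}\sum_i\sup_{t_i}a_i(t_i)$, which is the bookkeeping you allude to), and read \eqref{eqn:enest_utt} off the equation solved for $u_{tt}$. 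The Galerkin remark is an addition the paper omits, but otherwise the arguments coincide.
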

\begin{proof}
Dividing \eqref{eqn:linWest} by $\alpha$, multiplying with $\mathcal{A} u_t$ and integrating over $(0,t)\times \Omega$, using the identity
\[
\tfrac{\gamma}{\alpha} \mathcal{A}u \mathcal{A}u_t = \tfrac12 \tfrac{d}{dt} \Bigl(\tfrac{\gamma}{\alpha} (\mathcal{A}u)^2\Bigr) - \tfrac12\Bigl[\tfrac{\gamma}{\alpha}\Bigr]_t (\mathcal{A}u)^2
\]
as well as Young's inequality we obtain
\[
\begin{aligned}
&\tfrac12\|\mathcal{A}^{1/2}u_t(t)\|_{L^2(\Omega)}^2
+ \int_0^t\|\sqrt{\tfrac{\beta}{\alpha}}\mathcal{A}u_t(\tau)\|_{L^2(\Omega)}^2\, d\tau
+ \tfrac12\|\sqrt{\tfrac{\gamma}{\alpha}}\mathcal{A}u(t)\|_{L^2(\Omega)}^2
\\
&=\tfrac12\|\mathcal{A}^{1/2}u_t(0)\|_{L^2(\Omega)}^2 + \tfrac12\|\sqrt{\tfrac{\gamma(0)}{\alpha(0)}}\mathcal{A}u(0)\|_{L^2(\Omega)}^2\\
&\qquad+ \int_0^t \Bigl( (\tfrac{1}{\alpha}f(\tau) - \tfrac{\delta}{\alpha} u_t(\tau) - \tfrac{\mu}{\alpha} u(\tau) ) \mathcal{A}u_t(\tau)
+  \tfrac12\Bigl[\tfrac{\gamma}{\alpha}\Bigr]_t (\mathcal{A}u(\tau))^2\Bigr)\, d\tau\\
&\leq
\tfrac12\|\mathcal{A}^{1/2}u_t(0)\|_{L^2(\Omega)}^2 + \tfrac12\|\sqrt{\tfrac{\gamma(0)}{\alpha(0)}}\mathcal{A}u(0)\|_{L^2(\Omega)}^2\\
&\qquad+\tfrac12\int_0^t\|\sqrt{\tfrac{\alpha}{\beta}}(\tfrac{1}{\alpha}f - \tfrac{\delta}{\alpha} u_t - \tfrac{\mu}{\alpha} u )\|_{L^2(\Omega)}^2\, d\tau
+ \tfrac12 \int_0^t\|\sqrt{\tfrac{\beta}{\alpha}}\mathcal{A}u_t(\tau)\|_{L^2(\Omega)}^2\, d\tau\\
&\qquad + \tfrac12\int_0^t\|\tfrac{\alpha}{\gamma}\, \Bigl[\tfrac{\gamma}{\alpha}\Bigr]_t \|_{L^\infty(\Omega)}\, d\tau
\sup_{\tau\in(0,t)}\|\sqrt{\tfrac{\gamma}{\alpha}}(\mathcal{A}u(\tau))\|_{L^2(\Omega)}^2
\end{aligned}
\]
for any $t\in[0,T]$.
After multiplication by two and moving some terms we get, with $\tfrac{\alpha}{\gamma}\, \Bigl[\tfrac{\gamma}{\alpha}\Bigr]_t = \tfrac{\gamma_t}{\gamma}-\tfrac{\alpha_t}{\alpha}$\revision{$=\tfrac{d}{dt}\ln(\tfrac{\gamma}{\alpha})$},
\[
\begin{aligned}
&\|\mathcal{A}^{1/2}u_t(t)\|_{L^2(\Omega)}^2
+ \|\sqrt{\tfrac{\beta}{\alpha}}\mathcal{A}u_t\|_{L^2(0,t;L^2(\Omega))}^2
+ \|\sqrt{\tfrac{\gamma}{\alpha}}\mathcal{A}u(t)\|_{L^2(\Omega)}^2 \\
&\qquad\qquad- 2\|\revision{\tfrac{d}{dt}\ln(\tfrac{\gamma}{\alpha})}\|_{L^1(0,T;L^\infty(\Omega))} \|\sqrt{\tfrac{\gamma}{\alpha}}\mathcal{A}u\|_{L^\infty(0,t;L^2(\Omega))}^2
\\
&\leq
\|\mathcal{A}^{1/2}u_t(0)\|_{L^2(\Omega)}^2 + \|\sqrt{\tfrac{\gamma(0)}{\alpha(0)}}\mathcal{A}u(0)\|_{L^2(\Omega)}^2
+\|\tfrac{1}{\alpha\beta}\|_{L^\infty(0,t;L^\infty(\Omega))} \|f\|_{L^2(0,t;L^2(\Omega))}^2\\
&\qquad+ \|\tfrac{\delta}{\sqrt{\alpha\beta}}\|_{L^\infty(0,t;L^3(\Omega))}^2  \|u_t\|_{L^2(0,t;L^6(\Omega))}^2
+ \|\tfrac{\mu}{\sqrt{\alpha\beta}}\|_{L^\infty(0,t;L^2(\Omega))}^2  \|u\|_{L^2(0,t;L^\infty(\Omega))}^2\,.
\end{aligned}
\]
Thus, for any $t'\in[0,T]$ taking the supremum over $t\in[0,t']$, using the fact that
\begin{small}$\sup_{t\in[0,t']}\sum_{i=1}^m a_i(t)\geq 2^{-m}\sum_{i=1}^m\sup_{t_i\in[0,t']} a_i(t_i)$\end{small}, and assuming \begin{small}$\|\revision{\tfrac{d}{dt}\ln(\tfrac{\gamma}{\alpha})}\|_{L^1(0,T;L^\infty(\Omega))}$\end{small} to be smaller than $\frac{1}{16}$, we obtain \eqref{eqn:enest}.

To obtain equation~\eqref{eqn:enest_utt}, we just insert
\eqref{eqn:enest} after applying the $L^2(0,T;L^2(\Omega))$ norm to both sides
of the {\sc pde} identity $u_{tt}= \tfrac{1}{\alpha} f
-\frac{\beta}{\alpha}\mathcal{A} u_t - \frac{\gamma}{\alpha} \mathcal{A} u - \frac{\delta}{\alpha} u_t - \frac{\mu}{\alpha} u$.
\end{proof}

We first of all apply this lemma together with a fixed point argument to conclude well-posedness of the nonlinear equation \eqref{eqn:Westervelt_p}, i.e., of
\[
(1-2\kappa p)p_{tt}+c^2\mathcal{A} p +b\mathcal{A} p_t = r+2\kappa p_t^2
\]
by means of Banach's Fixed Point Theorem applied to the operator $\mathcal{T}$
mapping $p$ to a solution $\mathcal{T}(p)=p^+$ of the linear problem
\begin{equation}\label{eqn:calT}
(1-2\kappa p)p^+_{tt}+c^2\mathcal{A} p^+ +b\mathcal{A} p^+_t = r+2\kappa p_t^2\,,
\end{equation}
The difference $\hat{p}^+=\tilde{p}^+-p^+$ between values $\tilde{p}^+=\mathcal{T}(\tilde{p})$ and $p^+=\mathcal{T}(p)$ can be characterized by the {\sc pde}
\begin{equation}\label{eqn:calT_diff}
(1-2\kappa p)\hat{p}^+_{tt}+c^2\mathcal{A} \hat{p}^+ +b\mathcal{A} \hat{p}^+_t
= 2\kappa (\tilde{p}^+_{tt}\, \hat{p} + (p_t+\tilde{p}_t)\, \hat{p}_t)\,,
\end{equation}
where $\hat{p}=\tilde{p}-p$.
Therefore we will make use of Lemma \oldref{lem:enest} with $\alpha=1-2\kappa p$, $\beta\equiv b$, $\gamma\equiv c^2$, $\delta\equiv\mu\equiv0$, both for proving that $\mathcal{T}$ is a self-mapping and for establishing its contractivity.
For this purpose it will be convenient to note down the estimates \eqref{eqn:enest}, \eqref{eqn:enest_utt} in the following somewhat compressed form for this particular setting, assuming additionally that $-\tfrac12\leq 2\kappa p\leq \tfrac12$, which we will actually guarantee by a proper choice of the domain of $\mathcal{T}$, and which implies that $\tfrac12\leq\alpha\leq\tfrac32$.
\begin{eqnarray}
&&\|\mathcal{A}^{1/2}u_t\|_{L^\infty(0,t;L^2(\Omega))}^2
+ \tfrac23 b\|\mathcal{A}u_t\|_{L^2(0,t;L^2(\Omega))}^2
+ \tfrac{2(1-16\bar{\gamma})}{3} c^2\|\mathcal{A}u\|_{L^\infty(0,t;L^2(\Omega))}^2
\nonumber\\
&&\quad\leq 8\Bigl(\|\mathcal{A}^{1/2}u_t(0)\|_{L^2(\Omega)}^2
+2c^2\|\mathcal{A}u(0)\|_{L^2(\Omega)}^2
+\tfrac{2}{b} \|f\|_{L^2(0,t;L^2(\Omega))}^2\Bigr)\,,
\label{eqn:enest_calT}\\
&&\|u_{tt}\|_{L^2(0,t;L^2(\Omega))}
\nonumber\\
&&\quad \leq 2 \|f\|_{L^2(0,t;L^2(\Omega))}
+ \sqrt{2b}\|\sqrt{\tfrac{\beta}{\alpha}}\mathcal{A}u_t\|_{L^2(0,t;L^2(\Omega))}^2
\nonumber\\
&&\quad+\sqrt{2c^2T}\|\sqrt{\tfrac{\gamma}{\alpha}}\mathcal{A}u_t\|_{L^\infty(0,t;L^2(\Omega))}^2
\nonumber\\
&&\quad\leq 2\Bigl( 2 +\sqrt{\tfrac{c^2 T}{(1-16\bar{\gamma})b}}\Bigr)
\|f\|_{L^2(0,t;L^2(\Omega))}\,.
\label{eqn:enest_utt_calT}
\end{eqnarray}
We aim at keeping track of the constants $b$, $c$, $T$, $|\Omega|$ in our
estimates (at least those for establishing $\mathcal{T}$ as a self-mapping),
since in reality they can be of very different orders of magnitude.

In view of the estimates \eqref{eqn:enest_calT}, \eqref{eqn:enest_utt_calT}, we will work on the function space
\[
V = H^2(0,T;L^2(\Omega))\cap W^{1,\infty}(0,T;\Honedot  )\cap H^1(0,T;\Htwodot  )
\]
and consider the fixed point operator defined in \eqref{eqn:calT} $\mathcal{T}:M\to M$ (with the self-mapping property yet to be established) on a bounded subset of $V$
\begin{equation}\label{eqn:M}
\begin{aligned}
M:=\{&u\in V\,:\, \|u\|_{L^\infty(0,T;L^\infty(\Omega))}\leq R_0,\\
&\|\mathcal{A}^{1/2}u_t\|_{L^\infty(0,T;L^2(\Omega))}^2
+ \tfrac23 b\|\mathcal{A}u_t\|_{L^2(0,T;L^2(\Omega))}^2\\
&\qquad\qquad\qquad+ \tfrac{2(1-16\bar{\bar{\gamma}})}{3} c^2\|\mathcal{A}u\|_{L^\infty(0,T;L^2(\Omega))}^2
\leq\frac{R_1^2}{b},\\
&\|u_{tt}\|_{L^2(0,t;L^2(\Omega))}\leq R_2\}
\end{aligned}
\end{equation}
where
\begin{equation}\label{eqn:R0}
R_0\leq\frac{1}{4\|\kappa\|_{L^\infty}}\,, \quad
\bar{\bar{\gamma}}<\frac{1}{16}\,,
\end{equation}
and further conditions will be imposed on $R_1,R_2$, cf. \eqref{eqn:R1}, \eqref{eqn:R2a}, \eqref{eqn:R2b}, \eqref{eqn:R3} below.

Now fix $p\in M$. We are going to prove that then $p^+ = \mathcal{T}p\in M$, that is, $\mathcal{T}$ is a self-mapping on $M$, by applying the estimates \eqref{eqn:enest_calT}, \eqref{eqn:enest_utt_calT} to \eqref{eqn:calT}.
To this end, observe that \eqref{eqn:R0} immediately implies
\[
\tfrac12\leq\alpha\leq\tfrac32 \mbox{ for }\alpha=1-2\kappa p\,.
\]
Moreover,
\[
\bar{\gamma} = \|\tfrac{\alpha_t}{\alpha}\|_{L^1(0,T;L^\infty(\Omega))}
= \|\tfrac{2\kappa p_t}{\alpha}\|_{L^1(0,T;L^\infty(\Omega))}
\leq 4\|\kappa\|_{L^\infty(\Omega)}\|p_t\|_{L^1(0,T;L^\infty(\Omega))}\,,
\]
where by interpolation and with the constant $C_{(1+s)/2}$ of the embedding
$\mathcal{D}(\mathcal{A}^{(1+s)/2})$ $(=\dot{H}^{1+s}(\Omega))\to L^\infty(\Omega)$
for $s>\frac12$ (note that we are trying to be minimal with respect to negative powers of the typically small constant $b$ here)
\begin{equation}\label{eqn:L1}
\begin{aligned}
&\|p_t\|_{L^1(0,T;L^\infty(\Omega))}
\leq C_{(1+s)/2} \sqrt{T}\|\mathcal{A}^{s/2}\mathcal{A}^{1/2}p_t\|_{L^2(0,T;L^2(\Omega))}\\
&\qquad\leq C_{(1+s)/2} \sqrt{T}
\|\mathcal{A}p_t\|_{L^2(0,T;L^2(\Omega))}^s
\|\mathcal{A}^{1/2}p_t\|_{L^2(0,T;L^2(\Omega))}^{1-s}\\
&\qquad\leq C_{(1+s)/2} \sqrt{T}
\|(b\mathcal{A})p_t\|_{L^2(0,T;L^2(\Omega))}^s
\|(b\mathcal{A})^{1/2}p_t\|_{L^2(0,T;L^2(\Omega))}^{1-s}\, b^{-(1+s)/2}\\
&\qquad\leq C_{(1+s)/2} \sqrt{T}
(\sqrt{\tfrac32}R_1)^s
(\sqrt{T}R_1)^{1-s}\, b^{-(1+s)/2}
\end{aligned}
\end{equation}
(where we have used the first two terms in the $R_1$ estimate of \eqref{eqn:M}, after multiplying this estimate with $b$)
so that we get $\bar{\gamma}\leq\bar{\bar{\gamma}}$, provided
\begin{equation}\label{eqn:R1}
4\|\kappa\|_{L^\infty(\Omega)} C_{(1+s)/2}
(\tfrac32)^{s/2} T^{1-s/2}\, R_1\, b^{-(1+s)/2}
\leq\bar{\bar{\gamma}}\,.
\end{equation}
Thus, the assumptions of Lemma \oldref{lem:enest} are satisfied and we can make use of estimates \eqref{eqn:enest_calT}, \eqref{eqn:enest_utt_calT} with $f=r+2\kappa p_t^2$ cf. \eqref{eqn:calT}.
For this purpose we estimate
\[
\|f\|_{L^2(0,T;L^2(\Omega))} \leq
\|r\|_{L^2(0,T;L^2(\Omega))} + 2\|\kappa\|_{L^\infty(\Omega)} \|p_t\|_{L^4(0,T;L^4(\Omega))}^2
\]
where by H\"older's inequality with exponent $\frac32$
\begin{equation}\label{eqn:L4}
\begin{aligned}
\|p_t\|_{L^4(0,T;L^4(\Omega))}^2 =& \Bigl(\int_0^T\int_\Omega p_t^4\, dx\, dt\Bigr)^{1/2}
\leq |\Omega|^{1/6} \Bigl(\int_0^T\|p_t\|_{L^6(\Omega)}^4\, dt\Bigr)^{1/2}\\
\leq& |\Omega|^{1/6} T^{1/2} C_1^2\|\mathcal{A}^{1/2}p_t\|_{L^\infty(0,T;L^2(\Omega))}^2
\leq |\Omega|^{1/6} T^{1/2} C_1^2 \frac{R_1^2}{b}
\end{aligned}
\end{equation}
Thus \eqref{eqn:enest_calT}, \eqref{eqn:enest_utt_calT} yield the $R_1$ and $R_2$ estimates on $p^+$ in \eqref{eqn:M} provided that for
\begin{equation}\label{eqn:fbar}
\bar{f}:= \|r\|_{L^2(0,T;L^2(\Omega))} + 2\|\kappa\|_{L^\infty(\Omega)} |\Omega|^{1/6} T^{1/2} C_1^2 \frac{R_1^2}{b}
\end{equation}
we can guarantee
\begin{equation}\label{eqn:R2a}
8\Bigl(
\|\mathcal{A}^{1/2}u_1\|_{L^2(\Omega)}^2 + 2c^2\|\mathcal{A}u_0\|_{L^2(\Omega)}^2
+ \frac{2}{b}\bar{f}^2\Bigr) \leq \frac{R_1^2}{b}
\end{equation}
and
\begin{equation}\label{eqn:R2b}
2\Bigl( 2 +\sqrt{\tfrac{c^2 T}{(1-16\bar{\gamma})b}}\Bigr)\bar{f} \leq R_2\,.
\end{equation}
Finally, the $R_0$ estimate on $p^+$ in \eqref{eqn:M} follows from the $R_1$ estimate on $p^+$ in \eqref{eqn:M} (which we have just established) provided that
\begin{equation}\label{eqn:R3}
C_2 \Bigl({\frac{3}{2(1-16\bar{\bar{\gamma}})bc^2}} R_1\Bigr)^{1/2} \leq R_0\,,
\end{equation}
since $\|p^+\|_{L^\infty(0,T;L^\infty(\Omega))}\leq C_2 \|\mathcal{A}p^+\|_{L^\infty(0,T;L^2(\Omega))}$.
Thus we have shown that $\mathcal{T}$ is a self-mapping, provided \eqref{eqn:R0}, \eqref{eqn:R1}, \eqref{eqn:R2a}, \eqref{eqn:R2b}, \eqref{eqn:R3} hold. These can be achieved by making $\|\kappa\|_{L^\infty(\Omega)}$ small:
Choose $R_1 > 4 \|r\|_{L^2(0,T;L^2(\Omega))}$, set $R_0:=C_2 \sqrt{\frac{3}{2(1-16\bar{\bar{\gamma}})bc^2}}$\\ $R_1$,
$R_2:=2\Bigl( 2 +\sqrt{\tfrac{c^2 T}{(1-16\bar{\gamma})b}}\Bigr)\bar{f}$, so that we have \eqref{eqn:R2b}, \eqref{eqn:R3};
then possibly decrease $\|\kappa\|_{L^\infty(\Omega)}$ to achieve \eqref{eqn:R0}, \eqref{eqn:R1}, \eqref{eqn:R2a}.
These requirements also show that smallness of $b$ can to some extent be compensated by smallness of $T$, cf. \eqref{eqn:R1}, \eqref{eqn:fbar}, \eqref{eqn:R2b}.

We now proceed to derive contractivity of $\mathcal{T}$ by applying the
estimates \eqref{eqn:enest_calT}, \eqref{eqn:enest_utt_calT} to the {\sc pde}
\eqref{eqn:calT_diff}.
In the above, we have already shown that for any $p\in M$,
under conditions \eqref{eqn:R0}, \eqref{eqn:R1}, \eqref{eqn:R2a},
\eqref{eqn:R2b}, \eqref{eqn:R3}, the coefficients $\alpha=1-2\kappa p_t$,
$\beta\equiv b$, $\gamma\equiv c^2$, $\delta\equiv0$, $\mu\equiv0$ satisfy the
assumptions of Lemma \oldref{lem:enest} with $\frac12\leq\alpha\leq\frac32$,
$\bar{\gamma}\leq\bar{\bar{\gamma}}<\frac{1}{16}$, so that the estimates
\eqref{eqn:enest_calT}, \eqref{eqn:enest_utt_calT} with $u(0)=0$, $u_t(0)=0$
apply to \eqref{eqn:calT_diff}. Using them together with Gronwall's inequality
yields existence of  constants $C,\;\tilde{C}>0$
(depending only on the constants $R_0,R_1,R_2,\bar{\bar{\gamma}}$
in the definition of $M$ as well as $b,c^2$ and $T$), such that
\[
\begin{aligned}
&\|\hat{p}^+\|_V
\leq C \|2\kappa (\tilde{p}^+_{tt}\, \hat{p} + (p_t+\tilde{p}_t)\, \hat{p}_t)\|_{L^2(0,T;L^2(\Omega))}\\
&\qquad\leq 2C \|\kappa\|_{L^\infty(\Omega)}
\Bigl(\|\tilde{p}^+_{tt}\|_{L^2(0,T;L^2(\Omega))}\|\hat{p}\|_{L^\infty(0,T;L^\infty(\Omega))}
\\&\qquad\qquad\qquad\qquad+ \|p_t+\tilde{p}_t\|_{L^4(0,T;L^4(\Omega))} \|\hat{p}_t\|_{L^4(0,T;L^4(\Omega))}\Bigr)\\
&\qquad\leq 2C \|\kappa\|_{L^\infty(\Omega)}
\Bigl(R_2C_2\|\mathcal{A}\hat{p}\|_{L^\infty(0,T;L^2(\Omega))}
\\&\qquad\qquad\qquad\qquad+ 2|\Omega|^{1/6} T^{1/2} C_1^2 \frac{R_1}{\sqrt{b}} \|\mathcal{A}^{1/2}\hat{p}_t\|_{L^\infty(0,T;L^2(\Omega))}\Bigr)\\
&\qquad\leq \tilde{C} \|\kappa\|_{L^\infty(\Omega)} \|\hat{p}\|_V
\end{aligned}
\]
by \eqref{eqn:C1C2}, \eqref{eqn:M} and an estimate analogous to \eqref{eqn:L4}.
For $\|\kappa\|_{L^\infty(\Omega)}$ small enough this implies contractivity.

\begin{proposition} \label{prop:forward_welldef}
For any $u_0\in \Htwodot $, $u_1\in \Honedot  $, $r\in L^2(0,T;L^2(\Omega))$, $T>0$ and any $C^{1,1}$ domain $\Omega$ there exists $R>0$ such that for all $\kappa\in L^\infty(\Omega)$, $\|\kappa\|_{L^\infty(\Omega)}<R$ the quasilinear {\sc pde} \eqref{eqn:Westervelt_p} with homogeneous Dirichlet boundary conditions and initial conditions $u(0)=u_0$, $u_t(0)=u_1$ is uniquely solvable on the space $M$ defined in \eqref{eqn:M}.

Hence, for any $C^{1,1}$ manifold $\Sigma\subseteq\overline{\Omega}$,
the forward operator $F:\mathcal{D}(F)\to Y$, $F(\kappa)=\mbox{tr}_\Sigma p$
is well-defined and bounded on $\mathcal{D}(F)=B_R(0)\subseteq L^\infty(\Omega)$
and maps into any space $Y$ in which the space
$W^{1,\infty}(0,T;H^{1/2}(\Sigma))\cap H^1(0,T;H^{3/2}(\Sigma))$ is continuously embedded.
\end{proposition}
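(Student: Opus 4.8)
The plan is to obtain the unique solvability through Banach's Fixed Point Theorem applied to the operator $\mathcal{T}$ from \eqref{eqn:calT}, and then to read off the mapping properties of $F$ by factoring the solution map through the trace. The analytic substance is in fact already assembled above, so the proof is mostly a matter of organising it. The first ingredient I would pin down is that $M$ from \eqref{eqn:M} is a complete metric space: it is a bounded, convex subset of the Banach space $V$, and each of its defining bounds is preserved under strong $V$-convergence. Indeed, convergence in $V$ entails convergence of $\mathcal{A}^{1/2}u_t$ in $L^\infty(0,T;L^2(\Omega))$, of $\mathcal{A}u_t$ and $u_{tt}$ in $L^2(0,T;L^2(\Omega))$, and, via the embedding $H^1(0,T;\Htwodot)\hookrightarrow C([0,T];\Htwodot)$, of $\mathcal{A}u$ in $L^\infty(0,T;L^2(\Omega))$; since all the norms occurring in \eqref{eqn:M} are continuous along strongly convergent sequences (and the $R_0$ bound is implied by the $R_1$ bound through \eqref{eqn:R3}), the set $M$ is closed, hence complete.

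Next I would fix the constants exactly as prescribed in the discussion preceding the statement: choose $R_1,R_2$ and then $R>0$ small enough that, for every $\kappa$ with $\|\kappa\|_{L^\infty(\Omega)}<R$, the conditions \eqref{eqn:R0}--\eqref{eqn:R3} are satisfied. The computations carried out above then show simultaneously that $\mathcal{T}\colon M\to M$ is a well-defined self-mapping, and that the difference estimate for \eqref{eqn:calT_diff} gives $\|\mathcal{T}\tilde{p}-\mathcal{T}p\|_V\leq \tilde{C}\|\kappa\|_{L^\infty(\Omega)}\,\|\tilde{p}-p\|_V$, a genuine contraction in the full $V$-norm once $R$ is shrunk so that $\tilde{C}R<1$. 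Banach's Fixed Point Theorem then yields a unique fixed point $p\in M$, which by construction solves \eqref{eqn:Westervelt_p} with the prescribed data; this defines the solution map $G(\kappa)=p$ on $\mathcal{D}(F)=B_R(0)\subseteq L^\infty(\Omega)$, with image contained in the bounded set $M$.

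For the second assertion I would factor $F=\mbox{tr}_\Sigma\circ G$ and control the trace. Since $p=G(\kappa)\in M\subseteq V$, one has $p,p_t\in L^\infty(0,T;\Honedot)$ and $p,p_t\in L^2(0,T;\Htwodot)$. Applying the spatial trace theorem for the codimension-one $C^{1,1}$ manifold $\Sigma$ (continuity of $\mbox{tr}_\Sigma\colon H^2(\Omega)\to H^{3/2}(\Sigma)$ and $H^1(\Omega)\to H^{1/2}(\Sigma)$) pointwise in $t$ and commuting the trace with $\partial_t$, I obtain $\mbox{tr}_\Sigma p\in W^{1,\infty}(0,T;H^{1/2}(\Sigma))$ from the $W^{1,\infty}(0,T;\Honedot)$ regularity and $\mbox{tr}_\Sigma p\in H^1(0,T;H^{3/2}(\Sigma))$ from the $H^1(0,T;\Htwodot)$ regularity, with norms dominated by the radii defining $M$. (For a lower-dimensional receiver set, such as the single point $x_0$ in \eqref{eqn:obs}, the same conclusion follows instead from the Sobolev embedding $\Htwodot\hookrightarrow L^\infty(\Omega)$ in \eqref{eqn:C1C2}.) The assumed continuous embedding of $W^{1,\infty}(0,T;H^{1/2}(\Sigma))\cap H^1(0,T;H^{3/2}(\Sigma))$ into $Y$ then shows that $F$ is well-defined on $B_R(0)$ with bounded range in $Y$.

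The heavy lifting, namely Lemma \oldref{lem:enest} and the self-mapping and contraction estimates, is already behind us, so the only points that genuinely require care are the closedness of $M$ under $V$-limits and the bookkeeping that transfers the scalar spatial trace theorem to the time-dependent setting. The latter is where I would be most careful: it is precisely the commuting of the trace with time differentiation and the matching of the $L^\infty$-in-time versus $L^2$-in-time components of $V$ with those of the target that produces the asserted space $W^{1,\infty}(0,T;H^{1/2}(\Sigma))\cap H^1(0,T;H^{3/2}(\Sigma))$.
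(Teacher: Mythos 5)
Your proposal is correct and follows essentially the same route as the paper: the paper's proof of this proposition is precisely the preceding discussion, in which Lemma \oldref{lem:enest} is applied to \eqref{eqn:calT} and \eqref{eqn:calT_diff} to show that $\mathcal{T}$ is a self-mapping on $M$ under \eqref{eqn:R0}--\eqref{eqn:R3} and a contraction for $\|\kappa\|_{L^\infty(\Omega)}$ small, after which Banach's Fixed Point Theorem and the trace theorem on $\Sigma$ give the two assertions. The only additions you make are explicit bookkeeping the paper leaves implicit (closedness of $M$ in $V$ and the transfer of the spatial trace theorem to the time-dependent setting), both of which are correct.
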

Since measurements are done on values and not on derivatives of $p$, a natural image space $Y$ of $F$ is actually $L^Q(0,T;L^P(\Omega))$ for some $P,Q\in[1,\infty]$.

\medskip

To prove Fr\'{e}chet differentiability of $F$, we apply the estimates from Lemma \oldref{lem:enest} together with Gronwall's inequality to  \eqref{eqn:v}, \eqref{eqn:z}, \eqref{eqn:w} with $\alpha=1-\kappa p$, $\beta\equiv b$, $\gamma\equiv c^2$,
\[
\delta = \begin{cases}
2\kappa(\tilde{p}_t+p_t) \mbox{ for } \eqref{eqn:v}\\
4\kappa p_t \mbox{ for } \eqref{eqn:z}, \eqref{eqn:w}
\end{cases}\,, \quad
\mu = \begin{cases}
2\kappa\tilde{p}_{tt} \mbox{ for } \eqref{eqn:v}\\
2\kappa p_{tt} \mbox{ for } \eqref{eqn:z}, \eqref{eqn:w}
\end{cases}\,,
\]
\[
f=\begin{cases}
2\underline{d\kappa}(\tilde{p}\tilde{p}_{tt} + \tilde{p}_t^2) \mbox{ for } \eqref{eqn:v}\\
2\underline{d\kappa}(p p_{tt} + p_t^2)  \mbox{ for } \eqref{eqn:z}\\
2\underline{d\kappa}(v \tilde{p}_{tt} + p v_{tt} + (\tilde{p}_t+p_t) v_t ) + 2\kappa (v v_{tt} + v_t^2)
 \mbox{ for } \eqref{eqn:w}\end{cases}\,.
\]
This together with estimates similar to those in the proof of
Proposition~\oldref{prop:forward_welldef} yields existence of a constant
$C>0$ (depending only on the constants $R_0,R_1,R_2,\bar{\bar{\gamma}}$
in the definition of $M$ as well as $b,\;c^2$, the time horizon $T$, and the
radius $R$ according to Proposition \oldref{prop:forward_welldef}), such that
for any $\kappa\in B_R(0)\subseteq L^\infty(\Omega)$
\[
\begin{aligned}
&\|v\|_V\leq C\|\underline{d\kappa}\|_{L^\infty(\Omega)}\,, \quad
\|z\|_V\leq C\|\underline{d\kappa}\|_{L^\infty(\Omega)}\,, \\
&\|w\|_V\leq C\Bigl(\|\underline{d\kappa}\|_{L^\infty(\Omega)}\|v\|_V+ R\|v\|_V^2)\leq C^2(1+CR)\|\underline{d\kappa}\|_{L^\infty(\Omega)}^2 \,.
\end{aligned}
\]

\begin{proposition} \label{prop:forward_diff}
For any $u_0\in \Htwodot $, $u_1\in \Honedot  $, $r\in L^2(0,T;L^2(\Omega))$, $T>0$ any $C^{1,1}$ domain $\Omega$ and any $C^{1,1}$ manifold $\Sigma\subseteq\overline{\Omega}$, with $R$ as in Proposition \oldref{prop:forward_welldef}, the forward operator $F:\mathcal{D}(F)\to Y$, $F(\kappa)=\mbox{tr}_\Sigma p$ is Fr\'{e}chet differentiable on $\mathcal{D}(F)=B_R(0)\subseteq L^\infty(\Omega)$ with derivative given by $F'(\kappa)\underline{d\kappa} = \mbox{tr}_\Sigma z$ for $z$ solving \eqref{eqn:z} with homogeneous initial and boundary conditions.
\end{proposition}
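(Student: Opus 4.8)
The plan is to treat the three a priori estimates just derived (for $v$, $z$, and $w$) as the analytic substance and to assemble them into the three standard ingredients of Fr\'echet differentiability: well-definedness of the candidate derivative, its boundedness and linearity, and a quadratic bound on the Taylor remainder. First I would make sense of the candidate derivative, i.e.\ show that for each $\underline{d\kappa}\in L^\infty(\Omega)$ the linear equation \eqref{eqn:z} has a unique solution $z\in V$, so that $L\underline{d\kappa}:=\mbox{tr}_\Sigma z$ is well-defined. The coefficients $\alpha=1-2\kappa p$ (with $\tfrac12\le\alpha\le\tfrac32$ on $M$), $\beta\equiv b$, $\gamma\equiv c^2$, $\delta=4\kappa p_t$, $\mu=2\kappa p_{tt}$ meet the hypotheses of Lemma \oldref{lem:enest}: indeed $\delta\in L^\infty(0,T;L^3(\Omega))$ since $p_t\in L^\infty(0,T;\Honedot)\hookrightarrow L^\infty(0,T;L^6(\Omega))$, and the smallness requirement \eqref{eqn:smallness} holds exactly as in the self-mapping argument preceding Proposition \oldref{prop:forward_welldef}. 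Uniqueness is then immediate from the energy estimate with vanishing data, and existence follows by a routine Galerkin approximation whose a priori bound passes to the limit.

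The one delicate point, and the step I expect to be the main obstacle, is that $\mu=2\kappa p_{tt}$ lies only in $L^2(0,T;L^2(\Omega))$ rather than in the space $L^\infty(0,T;L^2(\Omega))$ nominally used in Lemma \oldref{lem:enest}. This is precisely the situation for which the lemma is meant to be paired with Gronwall's inequality: keeping the term $\int_0^t(\tfrac{\mu}{\alpha}u)\mathcal{A}u_t$ and estimating it by $\tfrac12\|\sqrt{\beta/\alpha}\mathcal{A}u_t\|_{L^2(\Omega)}^2+\tfrac12\|\tfrac{\mu}{\sqrt{\alpha\beta}}\|_{L^2(\Omega)}^2\|u\|_{L^\infty(\Omega)}^2$, the dissipative part is absorbed into the left-hand side while the remaining factor carries the $L^1$-in-time weight $\|\tfrac{\mu}{\sqrt{\alpha\beta}}(\tau)\|_{L^2(\Omega)}^2$, which Gronwall's inequality tolerates. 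This is what yields the bound $\|z\|_V\le C\|\underline{d\kappa}\|_{L^\infty(\Omega)}$ already recorded above, and the identical device handles the corresponding $\mu$-terms in \eqref{eqn:v} and \eqref{eqn:w}.

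Linearity of $L$ is transparent from \eqref{eqn:z}, which depends linearly on the pair $(\underline{d\kappa},z)$: the source $2\underline{d\kappa}(pp_{tt}+p_t^2)$ is linear in $\underline{d\kappa}$ and the operator on the left is linear in $z$. Boundedness of $L:L^\infty(\Omega)\to Y$ then follows by composing $\|z\|_V\le C\|\underline{d\kappa}\|_{L^\infty(\Omega)}$ with the continuity of the trace map $\mbox{tr}_\Sigma:V\to W^{1,\infty}(0,T;H^{1/2}(\Sigma))\cap H^1(0,T;H^{3/2}(\Sigma))\hookrightarrow Y$ already invoked in Proposition \oldref{prop:forward_welldef}.

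Finally I would verify the defining limit. For $\|\underline{d\kappa}\|_{L^\infty(\Omega)}$ small enough, $\tilde\kappa=\kappa+\underline{d\kappa}$ still lies in the open ball $B_R(0)=\mathcal{D}(F)$, so $\tilde p=G(\tilde\kappa)$ exists and, by construction, $w=\tilde p-p-z$ solves \eqref{eqn:w} with homogeneous initial and boundary data; hence $F(\tilde\kappa)-F(\kappa)-L\underline{d\kappa}=\mbox{tr}_\Sigma w$. The quadratic bound $\|w\|_V\le C^2(1+CR)\|\underline{d\kappa}\|_{L^\infty(\Omega)}^2$ obtained above, together with trace continuity, gives $\|\mbox{tr}_\Sigma w\|_Y\le C'\|\underline{d\kappa}\|_{L^\infty(\Omega)}^2=o(\|\underline{d\kappa}\|_{L^\infty(\Omega)})$, which is exactly Fr\'echet differentiability with $F'(\kappa)=L$, i.e.\ $F'(\kappa)\underline{d\kappa}=\mbox{tr}_\Sigma z$. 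Everything beyond the $\mu$-regularity issue is bookkeeping with the estimates already in hand.
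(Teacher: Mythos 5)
Your proposal is correct and follows essentially the same route as the paper: apply Lemma \oldref{lem:enest} with $\alpha=1-2\kappa p$, $\beta\equiv b$, $\gamma\equiv c^2$ and the stated $\delta,\mu,f$ to the equations \eqref{eqn:v}, \eqref{eqn:z}, \eqref{eqn:w}, combine with Gronwall's inequality to get $\|v\|_V,\|z\|_V\le C\|\underline{d\kappa}\|_{L^\infty(\Omega)}$ and $\|w\|_V\le C^2(1+CR)\|\underline{d\kappa}\|_{L^\infty(\Omega)}^2$, and conclude by continuity of $\mbox{tr}_\Sigma$. Your explicit treatment of the fact that $\mu=2\kappa p_{tt}$ is only $L^2$ rather than $L^\infty$ in time — absorbing half the dissipation and running Gronwall with an $L^1$-in-time weight — is exactly the step the paper leaves implicit in the phrase ``together with Gronwall's inequality,'' and it is handled correctly.
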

\begin{remark}\label{rem:contFprime}
Applying a similar reasoning to the difference $\bar{w}=\tilde{z}-z$ between $G'(\tilde{\kappa})=\tilde{z}$ and $G'(\kappa)=z$, which then with $\underline{d\kappa}=\tilde{\kappa}-\kappa$ and $v$ as in \eqref{eqn:v} satisfies (similarly to $w$ in \eqref{eqn:w})
\begin{equation}\label{eqn:wbar}
\begin{aligned}
(1-2\kappa p)\bar{w}_{tt}&+c^2\mathcal{A} \bar{w} +b\mathcal{A} \bar{w}_t - 4 \kappa p_t\, \bar{w}_t - 2\kappa p_{tt} \, \bar{w}\\
&= 2\underline{d\kappa}(v \tilde{p}_{tt} + p v_{tt} + (\tilde{p}_t+p_t) v_t ) + 2\kappa (\tilde{z} v_{tt} + 2v_t\tilde{z}_t)
\end{aligned}
\end{equation}
we obtain Lipschitz continuity of $F'$ on $B_R(0)\subseteq L^\infty(\Omega)$.
\end{remark}

\subsection{Adjoint of $F'(\kappa)$}\label{sec:adj}
The goal of this section is to compute the adjoint of $F'(\kappa)$, which is
needed, for example, for formulating Landweber iteration,
a Gauss-Newton method, or the necessary optimality conditions for Tikhonov
regularisation, as well as for assessing source conditions.
From the implementation point of view (see Section \oldref{sec:regmeth}),
we prefer a Hilbert space setting to a general Banach space one and therefore
use
\[
X=\dot{H}^s(\Omega)\,, \quad Y=L^2(0,T;L^2(\Sigma))\,,
\]
where $\dot{H}^s(\Omega)=\mathcal{D}(\mathcal{A}^{s/2})$ with $s\geq0$ appropriately chosen, see Section \oldref{sec:regmeth}.

The adjoint must satisfy the identity
\begin{equation}\label{eqn:adj_def}
\langle \underline{d\kappa}, F'(\kappa)^*y\rangle_X \stackrel{!}{=} \langle F'(\kappa)\underline{d\kappa}, y\rangle_Y =
\int_0^T\int_\Sigma z\, y \, dS \, dt
\end{equation}
for any $\underline{d\kappa}\in X$, $y\in Y$.
Taking into account \eqref{eqn:z} and the fact that $(1-2\kappa p)z_{tt} - 4 \kappa p_t\, z_t - 2\kappa p_{tt} \, z = \Bigl((1-2\kappa p)z\Bigr)_{tt}$, this leads us to defining an adjoint state as the solution $a$ to
\begin{equation*}
a(T)=0, \ a'(T)=0 \mbox{ and for all } t\in(0,T)\,: \  a(t)\in \Honedot   \mbox{ and for all } \phi\in \Honedot  :
\end{equation*}
\begin{equation}\label{eqn:a_weak}
\int_\Omega\Bigl[ (1-2\kappa p) a_{tt} \, \phi - (b \nabla a_t - c^2 \nabla a)\cdot \nabla \phi\Bigr]\, dx = \int_\Sigma y\, \phi\, dS\,,
\end{equation}


\noindent which is the variational form of
\begin{equation}\label{eqn:a}
\begin{aligned}
(1-2\kappa p) a_{tt} - b\mathcal{A} a_t + c^2\mathcal{A} a = 0 \mbox{ in }\Omega,\
[[\partial_{\nu_\Sigma} (b\,a_t - c^2a)]] = y \mbox{ on }\Sigma
\end{aligned}
\end{equation}
with homogeneous end and boundary conditions.

\smallskip

To see that \eqref{eqn:a_weak} is the weak form of \eqref{eqn:a}, we consider a partition of $\Omega$ into two subdomains, whose interface contains $\Sigma$,
see Figure \oldref{fig:Sigma},
with $\Omega_{in}$, $\Omega_{out}$ open, $\Omega_{in}\cap\Omega_{out}=\emptyset$,
$\overline{\Omega}_{in}\cup\Omega_{out}=\overline{\Omega}$,
$\Sigma\subseteq \overline{\Omega}_{in}\cup\overline{\Omega}_{out}$,
so that the outer normal vector $\nu$ coincides with $\nu_\Sigma$ on $\Sigma\cup\overline{\Omega}_{in}$ and with $-\nu_\Sigma$ on $\Sigma\cup\overline{\Omega}_{out}$ and integrate by parts to obtain
\font\smallsymbol = cmmi8

\xfiglen=0.7 true in
\yfiglen=0.7 true in
\textlen=\hsize \advance\textlen by 1.75\xfiglen
%

\setbox\figureone=\vbox{\hsize=\xfiglen
\beginpicture
  \setcoordinatesystem units <\xfiglen,\yfiglen>  point at 0.0 0.0
  \setplotarea x from -1 to 1, y from -0.7 to 0.6
\linethickness = 0.6pt
   \circulararc 90 degrees from 0.2 0 center at -0.3 0
   \ellipticalarc axes ratio 2:1.3 360 degrees from 1.1 0.0 center at 0.0 0.0
   \arrow <4pt> [0.5,0.7] from 0.1 0.3 to 0.3 0.5
\put {$\Sigma$} at 0.3 0.2
\put {$\nu_\Sigma$} at 0.1 0.5
\put {$\Omega_{\rm in}$} at -0.3 0.0
\put {$\Omega_{\rm out}$} at 0.6 -0.2
\setdashes <3pt> 
   \circulararc 270 degrees from -0.3 0.5 center at -0.3 0
\endpicture
}

\begin{figure}
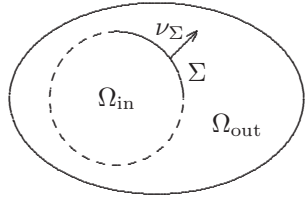

\hbox to \hsize{\hss\copy\figureone\hss}
\caption{\small {\bf The surface $\Sigma$}}
\label{fig:Sigma}
\end{figure}
\[
\begin{aligned}
&\int_\Omega \nabla a\cdot\nabla\phi \, dx =
\int_{\Omega_{in}} \nabla a\cdot\nabla\phi \, dx + \int_{\Omega_{out}} \nabla a\cdot\nabla\phi \, dx\\
&= \int_{\Omega_{in}} \mathcal{A} a \, \phi \, dx + \int_{\Omega_{out}} \mathcal{A} a \, \phi \, dx
- \int_{\partial\Omega_{in}}\partial_\nu a \, \phi\, dS - \int_{\partial\Omega_{out}}\partial_\nu a \, \phi\, dS\\
&= \int_\Omega \mathcal{A} a\,\phi \, dx - \int_\Sigma [[\partial_{\nu_\Sigma} a]] \, \phi\, dS\,,
\end{aligned}
\]
where ${\displaystyle [[\partial_{\nu_\Sigma} a]](x)
=\Bigl(\lim_{\tilde{x}\to x,\, \tilde{x}\in\Omega_{in}}\nabla a - \lim_{\tilde{x}\to x,\, \tilde{x}\in\Omega_{out}}\nabla a\Bigr)\cdot \nu_\Sigma}$.

With $a$ according to \eqref{eqn:a}, we can indeed establish the identity \eqref{eqn:adj_def}: Inserting \eqref{eqn:a_weak} with $\phi=z(t)$ into \eqref{eqn:adj_def} and integrating over $(0,T)$, using integration by parts and the initial/end conditions on $z$ and $a$, respectively, we obtain
\[
\begin{aligned}
\langle F'(\kappa)\underline{d\kappa}, y\rangle_Y =&
\int_0^T\int_\Omega\Bigl[ (1-2\kappa p) a_{tt} \, z - (b \nabla a_t - c^2 a)\cdot \nabla z\Bigr]\, dx\, dt\\
=&
\int_0^T\int_\Omega\Bigl[ ((1-2\kappa p) z)_{tt} \, a + (b \nabla z_t + c^2 z)\cdot \nabla a\Bigr]\, dx\, dt\\
=& \int_0^T\int_\Omega\underline{d\kappa}(p^2)_{tt}\, a \, dx\, dt
=\int_\Omega\underline{d\kappa} \int_0^T(p^2)_{tt}\, a \, dt\, dx\\
=&\int_\Omega\mathcal{A}^{s/2} \underline{d\kappa} \, \int_\Omega\mathcal{A}^{s/2}\Bigl[\mathcal{A}^{-s}\Bigl(\int_0^T(p^2)_{tt}\, a \, dt\Bigr)\Bigr]\, dx\,,
\end{aligned}
\]
where we have also used the weak form of \eqref{eqn:z} with $a$ as a test function.
Thus with $\langle \kappa, \mu\rangle_X=: \int_\Omega \mathcal{A}^{s/2} \kappa\, \mathcal{A}^{s/2} \mu\,dx$ (where we can replace $\mathcal{A}^{s/2}$ by $\nabla$ in case $s=1$ and $\mathcal{A}$ is equipped with homogeneous Dirichlet conditions), we get the following.
\begin{proposition}\label{prop:adj}
Under the assumptions of Proposition \oldref{prop:forward_diff}, the \begin{small}$\dot{H}^s(\Omega) - L^2(0,T;L^2(\Sigma))$\end{small} adjoint of $F'(\kappa)$ is given by $F'(\kappa)^*y = \mathcal{A}^{-s}\Bigl(\int_0^T(p^2)_{tt}\, a \, dt\Bigr)$, where $a$ solves \eqref{eqn:a} with $a(T)=a_t(T)=0$.
\end{proposition}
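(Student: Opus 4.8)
The plan is to verify the defining relation \eqref{eqn:adj_def} directly, promoting the formal computation displayed just before the statement to a rigorous one. Throughout, $z$ denotes the solution of the linearised equation \eqref{eqn:z} with homogeneous initial and boundary conditions, so that $F'(\kappa)\underline{d\kappa}=\mathrm{tr}_\Sigma z$ by Proposition \oldref{prop:forward_diff}. The first task is to establish existence and sufficient regularity of the adjoint state $a$ solving the backward problem \eqref{eqn:a} with homogeneous end conditions $a(T)=a_t(T)=0$, equivalently the variational problem \eqref{eqn:a_weak}. After the time reversal $\tau=T-t$ this becomes a forward linear strongly damped wave equation of exactly the type covered by Lemma \oldref{lem:enest} (with $\alpha=1-2\kappa p$, $\beta\equiv b$, $\gamma\equiv c^2$), the only new feature being that the forcing acts through the interface/jump condition $[[\partial_{\nu_\Sigma}(b\,a_t - c^2 a)]]=y$ on $\Sigma$ rather than as an interior source. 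For $y\in L^2(0,T;L^2(\Sigma))$ I would construct $a$ through the weak form \eqref{eqn:a_weak}, interpreting the right-hand side $\int_\Sigma y\,\phi\,dS$ via the trace embeddings consistent with Proposition \oldref{prop:forward_welldef}, and obtain the corresponding energy bounds by a Galerkin/duality argument.

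Given such an $a$, the core computation is to take $\phi=z(t)$ in \eqref{eqn:a_weak} and integrate over $(0,T)$, which yields
\[
\langle F'(\kappa)\underline{d\kappa},y\rangle_Y = \int_0^T\int_\Omega \bigl[(1-2\kappa p)\,a_{tt}\,z - (b\nabla a_t - c^2\nabla a)\cdot\nabla z\bigr]\,dx\,dt.
\]
I then transfer all derivatives from $a$ to $z$: integrating by parts twice in time on the first term, where the boundary-in-time contributions vanish because $a(T)=a_t(T)=0$ and $z(0)=z_t(0)=0$, while self-adjointness of $\mathcal{A}$ handles the spatial term. The decisive algebraic point is the identity $(1-2\kappa p)z_{tt}-4\kappa p_t z_t - 2\kappa p_{tt}z = \bigl((1-2\kappa p)z\bigr)_{tt}$, which lets me recognise the result as the weak form of \eqref{eqn:z} tested against $a$; since $2(p\,p_{tt}+p_t^2)=(p^2)_{tt}$, the source term emerges and gives
\[
\langle F'(\kappa)\underline{d\kappa},y\rangle_Y = \int_\Omega \underline{d\kappa}\,\Bigl(\int_0^T (p^2)_{tt}\,a\,dt\Bigr)\,dx.
\]

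It remains to express this $L^2(\Omega)$ pairing through the $X=\dot{H}^s(\Omega)$ inner product. Writing $h:=\int_0^T(p^2)_{tt}\,a\,dt$ and using self-adjointness of the fractional powers of $\mathcal{A}$,
\[
\int_\Omega \underline{d\kappa}\,h\,dx = \int_\Omega \mathcal{A}^{s/2}\underline{d\kappa}\,\mathcal{A}^{s/2}(\mathcal{A}^{-s}h)\,dx = \langle \underline{d\kappa},\,\mathcal{A}^{-s}h\rangle_X,
\]
which identifies $F'(\kappa)^*y=\mathcal{A}^{-s}h$ and closes the argument.

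I expect the main obstacle to be not the formal identity, which is essentially already displayed, but its rigorous justification: establishing well-posedness and adequate regularity of the adjoint problem \eqref{eqn:a} with its $\Sigma$-concentrated data, and guaranteeing that $a$ lies in a space dual to the one in which $z$ naturally resides, so that every integration by parts — in particular the spatial one producing the jump term on $\Sigma$ — is licit. The cleanest route is a density argument: prove the identity first for smooth $y$, for which all manipulations are unproblematic, and then pass to the limit using the energy estimates for $a$ together with the boundedness and continuity of $F'(\kappa)$ from Proposition \oldref{prop:forward_diff}.
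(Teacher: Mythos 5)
Your proposal follows essentially the same route as the paper: test the weak form \eqref{eqn:a_weak} with $\phi=z(t)$, integrate by parts in time using the homogeneous initial conditions on $z$ and end conditions on $a$, invoke the identity $(1-2\kappa p)z_{tt}-4\kappa p_t z_t-2\kappa p_{tt}z=\bigl((1-2\kappa p)z\bigr)_{tt}$ together with the weak form of \eqref{eqn:z} to produce $\int_\Omega\underline{d\kappa}\int_0^T(p^2)_{tt}\,a\,dt\,dx$, and then rewrite this $L^2$ pairing as the $\dot H^s(\Omega)$ inner product via $\mathcal{A}^{-s}$. Your additional remarks on well-posedness of the backward problem (via time reversal and Lemma \oldref{lem:enest}) and the density argument in $y$ are sensible supplements to what the paper leaves implicit, but do not change the argument.
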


\subsection{Weak sequential closedness}

Application of convergence results for variational regularisation methods like Tikhonov, Ivanov, or Morozov regularisation \cite{SeidmanVogel89,EKN89,HKPS07,Ivanov62,KKlassen17,LorenzWorliczek13,Morozov66} requires $F$ to be weak* sequentially closed, i.e.,
\begin{equation}\label{eqn:Fseqclos}
\Bigl(\kappa_j\stackrel{*}{\rightharpoonup} \kappa \mbox{ in }X\ \mbox{ and }
F(\kappa_j)\stackrel{*}{\rightarrow} y \mbox{ in }Y\Bigr) \ \Longrightarrow \
\Bigl(\kappa \in \mathcal{D}(F) \mbox{ and } F(\kappa)=y\Bigr)\,.
\end{equation}
Here we return to $X=L^\infty(\Omega)$, which
also implies \eqref{eqn:Fseqclos} with  $X=\dot{H}^s(\Omega)$ for $s>d/2$.
We verify \eqref{eqn:Fseqclos} by showing
\begin{proposition}\label{prop:Fseqclos}
For any $u_0\in\Htwodot $, $u_1\in \Honedot  $, $r\in L^2(0,T;L^2(\Omega))$, $T>0$ any $C^{1,1}$ domain $\Omega$ and $R$, $\mathcal{D}(F)$ as in Proposition \oldref{prop:forward_welldef}, the operator $G:\mathcal{D}(F)\to Y$, $G(\kappa)=p$  solving \eqref{eqn:Westervelt_p} satisfies
\begin{equation}\label{eqn:Gseqclos}
\Bigl(\kappa_j\stackrel{*}{\rightharpoonup} \kappa \mbox{ in }X\ \mbox{ and }
G(\kappa_j)\stackrel{*}{\rightharpoonup} g \mbox{ in }V\Bigr) \ \Longrightarrow \
\Bigl(\kappa \in \mathcal{D}(F) \mbox{ and } G(\kappa)=g\Bigr)\,.
\end{equation}
Thus for any $C^{1,1}$ manifold $\Sigma\subseteq\overline{\Omega}$, $Y$ as in Proposition \oldref{prop:forward_welldef}, the forward operator $F:\mathcal{D}(F)\to Y$, $F(\kappa)=\mbox{tr}_\Sigma p$ satisfies \eqref{eqn:Fseqclos}.
\end{proposition}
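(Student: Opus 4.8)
The plan is to establish \eqref{eqn:Gseqclos} for $G$ and then transfer the conclusion to $F=\mbox{tr}_\Sigma\circ G$ by continuity of the trace. First I would record uniform bounds: since each $\kappa_j\in\mathcal{D}(F)$, Proposition~\oldref{prop:forward_welldef} puts $p_j:=G(\kappa_j)$ into $M$, so $(p_j)$ is bounded in $V$; in particular $p_j$ is bounded in $L^\infty(0,T;\Htwodot )$, ${p_j}_t$ in $L^\infty(0,T;\Honedot )\cap L^2(0,T;\Htwodot )$, and ${p_j}_{tt}$ in $L^2(0,T;L^2(\Omega))$. The weak$^*$ limit $g\in V$ is given by hypothesis; the extra ingredient I need is \emph{strong} convergence, which I would extract from the Aubin--Lions--Simon lemma. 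Using the compact embeddings $\Htwodot \hookrightarrow\hookrightarrow\dot{H}^s(\Omega)\hookrightarrow\Honedot $ for $s\in[1,2)$ together with the bounds above, this yields, along a subsequence,
\[
p_j\to g \ \mbox{ in } C([0,T];\dot{H}^s(\Omega)),\qquad {p_j}_t\to g_t \ \mbox{ in } C([0,T];L^2(\Omega))\cap L^2(0,T;\Honedot ).
\]
Choosing $s>d/2$ (admissible since $d\le3$) and invoking the Sobolev embedding $\dot{H}^s(\Omega)\hookrightarrow L^\infty(\Omega)$, I obtain in particular $p_j\to g$ in $C([0,T];L^\infty(\Omega))$.

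Next I would pass to the limit in the weak form of \eqref{eqn:Westervelt_p}, testing against separable functions $\phi(x)\eta(t)$ with $\phi\in\Honedot $, $\eta\in C_c^\infty(0,T)$, and integrating by parts in time so that \emph{all} time derivatives fall on $\eta$. This is the decisive manoeuvre, as it removes ${p_j}_{tt}$ from the nonlinear term, leaving
\[
\int_0^T\!\!\int_\Omega\Bigl(p_j\,\phi\,\eta''-c^2\,\nabla p_j\cdot\nabla\phi\,\eta-b\,\nabla p_j\cdot\nabla\phi\,\eta'-\kappa_j\,p_j^2\,\phi\,\eta''-r\,\phi\,\eta\Bigr)\,dx\,dt=0.
\]
The linear terms converge since $p_j\rightharpoonup g$ and $\nabla p_j\rightharpoonup\nabla g$ weakly in $L^2(0,T;L^2(\Omega))$. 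For the nonlinear term I would split
\[
\int_0^T\!\!\int_\Omega\kappa_j\,p_j^2\,\phi\,\eta''\,dx\,dt=\int_0^T\!\!\int_\Omega\kappa_j\,\bigl(p_j^2-g^2\bigr)\,\phi\,\eta''\,dx\,dt+\int_\Omega\kappa_j\,\Psi\,dx,
\]
with $\Psi(x):=\phi(x)\int_0^T g^2\,\eta''\,dt\in L^\infty(\Omega)$. The first summand vanishes in the limit because $\|p_j^2-g^2\|_{L^\infty}\to0$ and $\|\kappa_j\|_{L^\infty}<R$, while the second converges to $\int_\Omega\kappa\,\Psi\,dx$ by the weak$^*$ convergence $\kappa_j\stackrel{*}{\rightharpoonup}\kappa$ paired against $\Psi\in L^1(\Omega)$. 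Hence $g$ solves the weak form of \eqref{eqn:Westervelt_p} with coefficient $\kappa$.

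It then remains to match the side conditions and identify the limit. The strong convergences $p_j\to g$ in $C([0,T];\dot{H}^s(\Omega))$ and ${p_j}_t\to g_t$ in $C([0,T];L^2(\Omega))$ carry the homogeneous initial data over to $g$, and the homogeneous Dirichlet condition is inherited via $g\in L^2(0,T;\Honedot )$. Weak$^*$ lower semicontinuity of the norms in the definition \eqref{eqn:M} of $M$, together with $p_j\in M$, gives $g\in M$, and the same lower semicontinuity for $\|\cdot\|_{L^\infty(\Omega)}$ gives $\|\kappa\|_{L^\infty(\Omega)}\le R$, so $\kappa\in\mathcal{D}(F)$ (working, as is standard for closedness statements of this kind, with the closed ball). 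Thus $g\in M$ solves \eqref{eqn:Westervelt_p} for the coefficient $\kappa$, and the uniqueness assertion of Proposition~\oldref{prop:forward_welldef} forces $g=G(\kappa)$, which is \eqref{eqn:Gseqclos}. Finally, since $\mbox{tr}_\Sigma$ is linear and bounded from $V$ into $Y$, it is weak$^*$-to-weak$^*$ continuous, whence $F(\kappa_j)=\mbox{tr}_\Sigma p_j\rightharpoonup\mbox{tr}_\Sigma g=F(\kappa)$; comparing with the hypothesis $F(\kappa_j)\stackrel{*}{\rightarrow}y$ yields $F(\kappa)=y$ and therefore \eqref{eqn:Fseqclos}.

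The hard part is the nonlinear product $\kappa_j(p_j^2)_{tt}$: a priori only $\kappa_j\stackrel{*}{\rightharpoonup}\kappa$ and $(p_j^2)_{tt}\rightharpoonup(g^2)_{tt}$ are available, and a weak$^*$ times weak product need not converge to the product of the limits. I expect the two devices above to resolve exactly this difficulty: the Aubin--Lions compactness upgrades $p_j$ to strong convergence in $C([0,T];L^\infty(\Omega))$, and the integration by parts in time shifts both time derivatives onto the test function, so that the passage to the limit in the nonlinearity only requires strong convergence of $p_j^2$ tested against the weak$^*$-convergent coefficients $\kappa_j$.
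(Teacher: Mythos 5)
Your argument is correct, but it follows a genuinely different route from the paper's. The paper does not pass to the limit in the equation for $p_j$ itself; instead it writes the {\sc pde} satisfied by the difference $v_j=G(\kappa_j)-G(\kappa)$ (analogous to \eqref{eqn:v}), shows that the weak* limit $\underline{dp}=g-G(\kappa)$ satisfies the corresponding \emph{linear homogeneous} equation with homogeneous initial data, and then concludes $\underline{dp}=0$ from the energy estimates of Lemma \oldref{lem:enest} together with Gronwall's inequality --- so the final identification step is a linear uniqueness argument rather than, as in your version, an appeal to the uniqueness clause of Proposition \oldref{prop:forward_welldef} on $M$ (which obliges you to verify $g\in M$ by lower semicontinuity, a step the paper does not need). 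The compactness input also differs: the paper uses only the compact embedding $V\hookrightarrow\hookrightarrow W^{1,4}(0,T;L^4(\Omega))$ and works with strong $L^4(0,T;L^4(\Omega))$ convergence of $\tilde p_j$ and $v_j$, whereas you invoke Aubin--Lions--Simon to upgrade to $C([0,T];L^\infty(\Omega))$ convergence; your stronger mode of convergence makes the nonlinear limit passage slightly cleaner, while the paper's weaker one suffices because the products are only ever paired with $L^2$-in-space test data. The decisive manoeuvre is the same in both proofs: integrating by parts in time to shift the derivatives in $\kappa_j(p_j^2)_{tt}$ (respectively $\underline{d\kappa}_j(\tilde p_j^2)_{tt}$ and $\kappa\tilde p_{j,tt}v_j$) onto the smooth test function, and then splitting off the term in which the weak* convergent coefficient is paired with a fixed $L^1$ function. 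Your approach buys a more self-contained treatment of the claim $\kappa\in\mathcal{D}(F)$ (the paper tacitly presupposes $p=G(\kappa)$ exists, and neither proof fully resolves the open-versus-closed ball issue you flag); the paper's approach buys a shorter endgame by reusing Lemma \oldref{lem:enest} instead of re-entering the fixed-point machinery. Two trivial slips on your side that do not affect the argument: the sign of the $c^2$ term in your displayed weak form, and the reference to ``homogeneous'' initial data where the proposition allows general $u_0,u_1$.
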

\begin{proof}
Abbreviating $\tilde{p}_j=G(\kappa_j)$, $p=G(\kappa)$, $v_j=\tilde{p}_j-p$, $\underline{d\kappa}_j=\kappa_j-\kappa$, we get, analogously to \eqref{eqn:v} that for any $\phi\in C_0^\infty(\Omega)$, $\psi\in C_0^\infty(0,T)$
\begin{equation}\label{eqn:vj}
\begin{aligned}
&\int_0^T\int_\Omega\psi(t)\phi(x)\Bigl(
(1-2\kappa p)v_{j,tt}+c^2\mathcal{A} v +b\mathcal{A} v_{j,t} \\
&\hspace*{3cm}- 2 \kappa (\tilde{p}_{j,t}+ p_t)\, v_{j,t} - 2\kappa \tilde{p}_{j,tt} \, v_j
- \underline{d\kappa}_j(\tilde{p}_j^2)_{tt}\Bigr)\, dx\, dt=0\,.
\end{aligned}
\end{equation}
For the linear (with respect to the sequences) terms we clearly get convergence from $v_j\stackrel{*}{\rightharpoonup} g-p=:\underline{dp}$  in $V$, that is,
\[
\begin{aligned}
&\int_0^T\int_\Omega\psi(t)\phi(x)\Bigl(
(1-2\kappa p)v_{j,tt}+c^2\mathcal{A} v +b\mathcal{A} v_{j,t} - 2 \kappa p_t\, v_{j,t} - 2\kappa p_{tt} \, v_j\Bigr)\, dx\, dt\\
&\ \to \
\int_0^T\int_\Omega\psi(t)\phi(x)\Bigl(
(1-2\kappa p)\underline{dp}_{tt}+c^2\mathcal{A} \underline{dp} +b\mathcal{A} \underline{dp}_t - 2 \kappa p_t\, \underline{dp}_t - 2\kappa p_{tt} \, \underline{dp}\Bigr)\, dx\, dt
\,.
\end{aligned}
\]
It remains to consider the nonlinear terms.
\\
\indent For $\int_0^T\int_\Omega\psi(t)\phi(x) \kappa \tilde{p}_{j,t}\, v_{j,t} \, dx\, dt$,
we can make use of compactness of the embedding $V\to W^{1,4}(0,T;L^4(\Omega))$
so that there exists a subsequence $j_\ell$ along which the convergence
$\tilde{p}_{j,t}\to g_t$ and $v_j\to \underline{dp}$ is actually strong in
$L^4(0,T;L^4(\Omega))$ and we get
\[
\begin{aligned}
&\Big|\int_0^T\int_\Omega\psi(t)\phi(x) \kappa \Bigl(\tilde{p}_{j_\ell,t}\, v_{j_\ell,t} - g_t\underline{dp}_t\Bigr)\, dx\, dt\Big|\\
&\leq \|\kappa\|_{L^\infty(\Omega)} \|\phi\|_{L^2(\Omega)} \|\psi\|_{L^2(0,T)}
\Bigl(
\|\tilde{p}_{j_\ell,t}\|_{L^4(0,T;L^4(\Omega))} \|v_{j_\ell,t} - \underline{dp}_t\|_{L^4(0,T;L^4(\Omega))}\\
&\qquad\qquad\qquad\qquad\qquad\qquad
+ \|\tilde{p}_{j_\ell,t} - g_t\|_{L^4(0,T;L^4(\Omega))} \|\underline{dp}_t\|_{L^4(0,T;L^4(\Omega))}\Bigr)
\to0
\end{aligned}
\]
as $\ell\to\infty$. A subsequence-subsequence argument yields convergence of
$\int_0^T\int_\Omega\psi(t)\phi$\\$(x) \kappa \tilde{p}_{j,t}\, v_{j,t} \, dx\, dt$
to $\int_0^T\int_\Omega\psi(t)\phi(x) \kappa g_t\, \underline{dp} \, dx\, dt$.

We cannot make immediate use of such a compactness argument for
$\int_0^T\int_\Omega\psi(t)\phi$\\$(x) \kappa \tilde{p}_{j,tt} \, v_j \, dx\, dt$,
since $\tilde{p}_{j,tt}$ is just in $L^2(0,T;L^2(\Omega))$ and no more.
Instead we integrate by parts with respect to time to obtain,
for any $W^{1,4}(0,T;L^4(\Omega))$ convergent subsequence
\[
\begin{aligned}
&\int_0^T\int_\Omega\psi(t)\phi(x) \kappa \Bigl(\tilde{p}_{j_\ell,tt}\, v_{j_\ell} - g_{tt}\underline{dp}\Bigr)\, dx\, dt\\
&\quad=\int_0^T\int_\Omega\Bigl\{
\psi(t)\phi(x) \kappa \Bigl(\tilde{p}_{j_\ell,t}\, v_{j_\ell,t} - g_t\underline{dp}_t\Bigr)
+\psi'(t)\phi(x) \kappa \Bigl(\tilde{p}_{j_\ell,t}\, v_j - g_t\underline{dp}\Bigr)
\Bigr\}\, dx\, dt\,,
\end{aligned}
\]
where the first term can be tackled with exactly the same
$L^4(0,T;L^4(\Omega))$ convergence argument as above, and the second term can
be estimated even slightly easier in an analogous way:
\[
\begin{aligned}
&\Big|\int_0^T\int_\Omega\psi'(t)\phi(x) \kappa \Bigl(\tilde{p}_{j_\ell,t}\, v_{j_\ell} - g_t\underline{dp}\Bigr)\, dx\, dt\Big|\\
&\quad\leq \|\kappa\|_{L^\infty(\Omega)} \|\phi\|_{L^2(\Omega)} \|\psi'\|_{L^2(0,T)}
\Bigl(
\|\tilde{p}_{j_\ell,t}\|_{L^4(0,T;L^4(\Omega))} \|v_{j_\ell} - \underline{dp}\|_{L^4(0,T;L^4(\Omega))}\\
&\qquad\qquad\qquad
+ \|\tilde{p}_{j_\ell,t} - g_t\|_{L^4(0,T;L^4(\Omega))} \|\underline{dp}\|_{L^4(0,T;L^4(\Omega))}\Bigr)
\to0\qquad \mbox{as\ }
\ell\to\infty.
\end{aligned}
\]
Similarly, for
$\int_0^T\int_\Omega\psi(t)\phi(x) \underline{d\kappa}_j(\tilde{p}_j^2)_{tt} \, dx\, dt$ we get
\[
\begin{aligned}
&\Big|\int_0^T\int_\Omega\psi(t)\phi(x) \underline{d\kappa}_j (\tilde{p}_j^2)_{tt}\, dx\, dt\Big|
=\Big|\int_0^T\int_\Omega\psi''(t)\phi(x) \underline{d\kappa}_j \tilde{p}_j^2\, dx\, dt\Big|\\
&\leq\Big|\int_0^T\int_\Omega\psi''(t)\phi(x) \underline{d\kappa}_j g^2\, dx\, dt\Big|\\
&\qquad+\|\psi''\|_{L^\infty(0,T)} \|\phi\|_{L^\infty(\Omega)} \|\underline{d\kappa}_j\|_{L^\infty(\Omega)}
\int_0^T\int_\Omega |\tilde{p}_j^2-g^2|\, dx\, dt\\
\end{aligned}
\]
where the first term tends to zero by weak convergence of $\underline{d\kappa}_j$ to zero and the second one by $L^4(0,T;L^4(\Omega))$ convergence of $\tilde{p}_j$ to $g$ along subsequences.

Taking the limit $j\to\infty$ in \eqref{eqn:vj} and making use of the fact that $\phi\in C_0^\infty(\Omega)$, $\psi\in C_0^\infty(0,T)$ are arbitrary, we therefore get
\[
(1-2\kappa p)\underline{dp}_{tt}+c^2\mathcal{A} \underline{dp} +b\mathcal{A} \underline{dp}_t - 2 \kappa (p_t+g_t)\, \underline{dp}_t - 2\kappa g_{tt} \, \underline{dp}
= 0\,.
\]
(This identity actually holds in $L^2(0,T;L^2(\Omega)$.)
Moreover, like $\tilde{p}_j$ and $p$, also $g$ satisfies the initial conditions $g(0)=p_0$, $g_t(0)=p_1$, so $\underline{dp}=g-p$ satisfies homogeneous initial conditions.
Lemma \oldref{lem:enest} -- which is applicable due to $v,g\in V$, $p\in M$ -- together with Gronwall's inequality therefore yields $\underline{dp}=0$, thus $g=p$.

The conclusion \eqref{eqn:Fseqclos} on $F$ follows by boundedness of the trace operator $\mbox{tr}_\Sigma $, which is linear and therefore weak* continuous.
\end{proof}

\begin{remark}\label{rem:parabolic}
We mention in passing that due to the strong damping present in the equation, one might, alternatively to the second order wave equation, use a first order in time heat equation type reformulation.
Integrating with respect to time we get, in place of \eqref{eqn:Westervelt_p}, 
the parabolic {\sc pde} with memory
\begin{equation}\label{eqn:Westervelt_p_int}
p_t - b\triangle p -c^2\triangle \int_0^t p(\tau)\, d\tau = \kappa(x)(p^2)_t + R(x,t)
\end{equation}
where $R(x,t)=\int_0^t r(x,\tau)\, d\tau$.
Note that the right hand side also contains the first time derivative of the state, so we rewrite the equation as
\[
(1-2\kappa(x)p) p_t - b\triangle p -c^2\int_0^t\triangle p(\tau)\, d\tau =  R(x,t)
\]
Although the underlying {\sc pde} is clearly supposed to model wave propagation,
this parabolic reformulation is justified mathematically by the fact that due to the strong damping, the
linearisation of \eqref{eqn:Westervelt_p} is known to give rise to an analytic
semigroup and maximal parabolic regularity in the appropriate spaces, cf.
\cite{KL09,MeyerWilke11}.
We make use of this reformulation mainly for numerical computations where it allows
us to adapt a highly efficient Crank-Nicolson solver for the forward problem.
\end{remark}

\section{Uniqueness and ill-posedness}\label{sec:injectivity}

In either of the formulations the nonlocal parabolic \eqref{eqn:Westervelt_p_int}
or the hyperbolic \eqref{eqn:Westervelt_p} a time trace data can be expected to give valuable
information on unknown spatially-dependent coefficients occurring in the
operator.  For linear equations this idea goes back to at least the work of
Pierce \cite{Pierce:1979}, where the unique determination of a potential
$q$ in $\,u_t -u_{xx} + q(x) u = 0$ could be obtained from measurements of the
overposed value of $h(t) = u(1,t)$ give Neumann base conditions at that point.
The technique was to convert the problem to one of Sturm-Liouville type and
thus the method worked for all the physically important coefficients in a
parabolic equation in one space dimension by means of the Liouville transform.
As we will see later in this section the recovery of the spectral information
relies on an analytic continuation argument.
In its practical aspect it amounts to recovering the coefficients
and exponents in a Dirichlet series representing an analytic function
(which is merely sampled at a discrete set of points).
Even after this has been accomplished what must be recovered is the variation
of the actual sequence of eigenvalues of the operator from their expected
asymptotic form given by the Weyl formula.
However,  this means the information content is a decreasing sequence
(at least in $\ell^2$ but with additional smoothness on the
coefficients can have a much faster decay) that has to be
recovered when masked by an increasing sequence that is quadratically growing.
See,~\cite{RundellSacks:1992a} for details.
Thus even in the case of a linear equation the recovery of a spatial
coefficient from time trace data is exponentially ill-conditioned and
any constants that one might hope work in ones favour turn out to be in
opposition.

However the method breaks down completely in the case of nonlinear equations.
In particular, our unknown coefficient $\kappa$ appears intrinsically coupled
to the nonlinearity.
Thus instead we shall make do with a local injectivity argument for the
recovery of $\kappa$ leaving the broader issue of actual uniqueness
of the inverse problem for future work.
Although this is certainly a lesser result,
it does provide hope and insight on possible uniqueness
for the original nonlinear inverse problems and is also crucial for
linearisation methods such as  Newton and Halley.
In particular, we shall show that it leads to the frozen Newton and
frozen Halley schemes being well-defined and justifies our reconstruction
methods to be presented in Section~\oldref{sec:reconstructions}.

From \eqref{eqn:z}, \eqref{eqn:Westervelt_p} we have $F'(0)\underline{d\kappa} = \mbox{tr}_\Sigma z_0$ where $p_0$ and $z_0$ solve
\begin{equation}\label{eqn:p0}
p_{0,tt}+c^2\mathcal{A} p_0 + b\mathcal{A} p_{0,t} = r
\end{equation}
and
\begin{equation}\label{eqn:z0}
z_{0,tt}+c^2\mathcal{A} z_0 +b\mathcal{A} z_{0,t}
 = \underline{d\kappa}(p_0^2)_{tt}\,,
\end{equation}
both with homogeneous initial conditions.

In setting up an experiment designed to recover $\kappa$,
since the initial conditions are zero, we can use non-homogeneous boundary
conditions to drive the system or a nonzero forcing function $r(x,t)$.
We assume here that the latter has been taken and specifically that $r$
has the form
\revision{
\begin{equation}\label{eqn:choicer}
r(x,t) = f(x) \beta''(t) + \mathcal{A}f(x)(c^2\beta(t) + b\beta'(t))
\end{equation}
with some function $f$ in the domain of $\mathcal{A}$ vanishing only on a set of
measure zero and some twice differentiable function $\beta$ of time such that $(\beta^2)''(t_0)\not=0$ for some $t_0>0$.
Considering, for example the simple case $\beta(t)=t$, one can see that due to
the properties of solutions to Poisson's equation (that is, $\triangle f\leq0$ in $int(\Gamma)\subset\Omega$ and  $f > 0$ on $\Gamma$ implies $f>0$ in $int(\Gamma)$ by the maximum principle), this is possible even with
excitations concentrated on a surface $\Gamma$ like those pointed to in the introduction.
With \eqref{eqn:choicer}, the solution $p_0$ of equation~\eqref{eqn:p0} is clearly given by
$p_0(x,t) = f(x) \beta(t)$.
}
With this, now $\underline{d\kappa}(p_0^2)_{tt}$ can be written in the form
\begin{equation}\label{eqn:p0_fac2}
\bigl(\underline{d\kappa}(x)p_0^2(x,t)\bigr)_{tt} = \sum_{j=1}^\infty a_j\phi_j(x)\psi_j(t)
\end{equation}
\noindent where $a_j$ are the coefficients of $\underline{d\kappa} f$ with respect to the basis $(\phi_j)_{\j\in\mathbb{N}}$, $\psi_j=(\beta^2)''$,
and the eigensystem $(\lambda_j,\phi_j)_{j\in\mathbb{N}}$ of the elliptic operator
$\mathcal{A}$ and the functions $\psi_j$ are known and we normalise $\phi_j$
to have $L^2$ norm unity.
\revision{Note that $r$ is the chosen excitation of the system. So the above condition \eqref{eqn:choicer} -- although it might appear as a restriction at first glance -- actually gives a hint on how to select the input $r$ to enable sufficient sensitivity of the observations with respect to the searched for coefficient.
Also note that condition \eqref{eqn:p0_fac2} might be achieved even in more general settings than \eqref{eqn:choicer}.
}
\Margin{rep C, 4.}

We can rewrite equation~\eqref{eqn:z0} as

\begin{equation}\label{eqn:z0j2}
z_j''(t)+c^2\lambda_j z_j(t) +b\lambda_j z_j'(t)
 = a_j \psi_j(t)\,, \quad t>0\,, \qquad z_j(0)=0\,, \ z_j'(0)=0
\end{equation}
for all $j\in\mathbb{N}$, where
\[
z_0(x,t)=\sum_{j=1}^\infty z_j(t)\phi_j(x)\, .
\qquad
\]
Applying the Laplace transform to both sides of \eqref{eqn:z0j2} yields
\begin{equation}\label{eqn:z0jhat2}
\hat{z}_j(s)= \hat w_j(s) a_j \widehat{\psi}_j(s), \qquad
\mbox{where }\ \hat w_j(s) = \frac{1}{s^2+c^2\lambda_j  +b\lambda_j s}
\,, \qquad s\in\mathbb{C}\,,
\end{equation}
where we have used homogeneity of the initial conditions.
The poles of the function $\hat{w}_j(s)=\frac{1}{s^2+c^2\lambda_j  +b\lambda_j s}$ can be explicitly computed
\[
p_{j,\pm} = \tfrac12\left(-b\pm\sqrt{b^2-\tfrac{4c^2}{\lambda_j}}\right)\lambda_j
= \tfrac{2c^2}{-b\mp\sqrt{b^2-\tfrac{4c^2}{\lambda_j}}}
\]
are single, real up to finitely many complex conjugate pairs and they lie strictly in the left half of the complex plane, accumulating only at $-\tfrac{c^2}{b}$ and $-\infty$, more precisely,
\[
p_{j,+} \to -\tfrac{c^2}{b}\,, \quad p_{j,-} \to -\infty \mbox{ with } \frac{p_{j,-}}{\lambda_j}\to-b
\mbox{ as }j\to\infty.
\]

Moreover, they are different for different $\lambda_j$, i.e.,
\begin{equation}
p_{j,+}=p_{k,+} \ \Rightarrow \ \lambda_j=\lambda_k\,.
\end{equation}
Indeed, if $p_{j,+}=p_{k,+}$ then $-\lambda_k(c^2+bp_{k,+})=p_{k,+}^2=p_{j,+}^2=-\lambda_j(c^2+bp_{j,+})=-\lambda_j(c^2+bp_{k,+})$, thus $(\lambda_j-\lambda_k)(c^2+bp_{k,+})=0$ where $(c^2+bp_{k,+})$ does not vanish for otherwise $p_{k,+}^2$ would vanish.

Thus, assuming that $F'(0)\underline{d\kappa} = \mbox{tr}_\Sigma z_0=0$ implies that
\[
0 = \hat{z}_0(x_0,s) = \sum_{j=1}^\infty a_j \phi_j(x_0) \hat{w}_j(s) \widehat{\psi}_j(s)\,, \qquad
\mbox{ for all }s\in\mathbb{C}\,, \ x_0\in  \Sigma\,.
\]
Considering the residues at some pole $p_{m,+}$ yields
\[
\begin{aligned}
0 &= \mbox{Res}(\hat{z}_0(x_0;p_{m,+})
= \sum_{j=1}^\infty a_j \phi_j(x_0) \lim_{s\to p_{m,+}}(s-p_{m,+}) \hat{w}_j(s) \widehat{\psi}_j(s)\\
&=  \sum_{k\in K_m} a_k \phi_k(x_0) \mbox{Res}(\hat{w}_k;p_{m,+}) \widehat{\psi}_k(p_{m,+})
=  \frac{\widehat{\psi}_k(p_{m,+})}{\sqrt{b^2-\tfrac{4c^2}{\lambda_m}}\lambda_m} \sum_{k\in K_m} a_k \phi_k(x_0) \,,
\end{aligned}
\]
where $K_m\subseteq\mathbb{N}$ is an enumeration of the eigenspace basis $(\phi_k)_{k\in K_m}$ corresponding to the eigenvalue $\lambda_m$.
Assuming now that
\begin{equation}\label{eqn:ass_inj_psi}
\widehat{\psi}_k(p_{m,+})\not=0
\end{equation}
and there exists points $x_{0,m,1}, \ldots x_{0,m,N_m}\in \Sigma$, $N_m\geq \# K_m$ such that
\begin{equation}\label{eqn:ass_inj_Sigma}
\mbox{the matrix }\phi_k(x_{0,m,i})_{k\in K_m,i\in\{1,\ldots,N_m\}} \mbox{ has full rank }\# K_m
\end{equation}
we can conclude that $a_k=0$ for all $k\in K_m$.
The same argument goes through with $p_{m,-}$ in place of $p_{m,+}$.


Now since $(p_0^2)_{tt}(t_0)=f (\beta^2)''(t_0)$ only vanishes on a set of measure zero
we can conclude that $\underline{d\kappa}=0$ almost everywhere.

\begin{theorem}\label{th:lininj}
Under the above assumptions \eqref{eqn:p0_fac2}, \eqref{eqn:ass_inj_psi}, \eqref{eqn:ass_inj_Sigma} for all $m\in\mathbb{N}$, $k\in K_m$, the linearised derivative at $\kappa=0$, $F'(0)$ is injective.
\end{theorem}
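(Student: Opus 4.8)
The plan is to show that the hypothesis $F'(0)\underline{d\kappa}=0$, i.e.\ $\text{tr}_\Sigma z_0\equiv 0$, forces every coefficient $a_k$ of $\underline{d\kappa}\,f$ in the eigenbasis to vanish, and then to recover $\underline{d\kappa}=0$ from the fact that $f$ vanishes only on a null set. The engine of the argument is to read off the coefficients $a_k$ one eigenspace at a time from the explicit pole structure of the modal Laplace transforms, rather than to attempt a direct space–time inversion.

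Concretely, I would first expand $z_0(x,t)=\sum_j z_j(t)\phi_j(x)$ and use the separable excitation \eqref{eqn:choicer}, which makes $p_0(x,t)=f(x)\beta(t)$ and puts the source in \eqref{eqn:z0} into the product form \eqref{eqn:p0_fac2}. Each modal amplitude then satisfies the scalar ODE \eqref{eqn:z0j2}, and a Laplace transform in time yields the factored representation $\hat z_j(s)=\hat w_j(s)\,a_j\,\hat\psi_j(s)$ with $\hat w_j(s)=(s^2+c^2\lambda_j+b\lambda_j s)^{-1}$. The vanishing of the trace thus becomes the identity
\[
\hat z_0(x_0,s)=\sum_{j=1}^\infty a_j\,\phi_j(x_0)\,\hat w_j(s)\,\hat\psi_j(s)=0,
\qquad x_0\in\Sigma .
\]

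Next I would extract the $a_k$ by residues. The poles $p_{m,\pm}$ of $\hat w_m$ lie strictly in the left half-plane, accumulate only at $-c^2/b$ and $-\infty$, and are distinct for distinct eigenvalues, so each $p_{m,+}$ is an isolated pole seen only by the modes in the eigenspace $K_m$. Computing $\text{Res}(\hat z_0(x_0;\cdot);p_{m,+})$ therefore collapses the series to a nonzero scalar times $\sum_{k\in K_m}a_k\phi_k(x_0)$; here the potentially vanishing factor $\hat\psi_k(p_{m,+})$ is nonzero precisely by the non-degeneracy assumption \eqref{eqn:ass_inj_psi}, so $\sum_{k\in K_m}a_k\phi_k(x_0)=0$ for every $x_0\in\Sigma$. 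Evaluating this relation at the points supplied by \eqref{eqn:ass_inj_Sigma} gives a full-rank linear system in $(a_k)_{k\in K_m}$, whence $a_k=0$; letting $m$ range over $\mathbb{N}$ yields $a_k=0$ for all $k$, i.e.\ $\underline{d\kappa}\,f=0$ in $L^2(\Omega)$, and the a.e.\ non-vanishing of $f$ then finishes the proof.

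I expect the hard part to be the analytic justification of the two transitions into the Laplace domain, rather than the algebra above. The observation lives only on the finite window $(0,T)$, so to legitimately equate $\hat z_0(x_0,\cdot)$ with zero I would first argue that $z_0(x_0,\cdot)$ is analytic in $t$ — which holds when the excitation is analytic, e.g.\ $\beta(t)=t$ as in the canonical example — so that its vanishing on $(0,T)$ propagates to all $t>0$; absent analyticity one instead reads the statement as data on $(0,\infty)$. The second subtle point is interchanging the infinite modal sum with the residue operation: because the poles cluster at $-c^2/b$, I would verify uniform convergence of the series on a fixed small circle about each isolated $p_{m,+}$, using the $\ell^2$ decay of $(a_j)$ inherited from $\underline{d\kappa}\,f\in L^2(\Omega)$ together with the bounds on $\hat w_j$ and $\hat\psi_j$ away from the poles, so that term-by-term residue extraction is valid. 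Once these two points are secured, the spectral separation and the two rank/non-vanishing conditions deliver injectivity directly.
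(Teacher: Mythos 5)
Your proposal follows essentially the same route as the paper: the separable excitation \eqref{eqn:choicer} giving $p_0=f\beta$, the modal decomposition and Laplace transform leading to $\hat z_j=\hat w_j a_j\hat\psi_j$, residue extraction at the isolated poles $p_{m,\pm}$ (using their distinctness across eigenvalues), and then assumptions \eqref{eqn:ass_inj_psi} and \eqref{eqn:ass_inj_Sigma} to conclude $a_k=0$ and finally $\underline{d\kappa}=0$ from the a.e.\ non-vanishing of $f$. Your two flagged technical points (analytic continuation of the time trace beyond $(0,T)$ and the interchange of the modal sum with the residue operation near the accumulation point $-c^2/b$) are genuine gaps that the paper itself leaves implicit, and your proposed resolutions are reasonable.
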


In particular, \eqref{eqn:ass_inj_Sigma} is satisfied in the spatially 1-dimensional case $\Sigma=\{x_0\}$, where all eigenvalues of $\mathcal{A}$ are single, i.e., $\# K_m=1$ for all $m$, provided none of the eigenfunctions vanish at $x_0$; this can be achieved by taking $x_0$ on the boundary and where $\phi_j$ is subject to non-Dirichlet conditions.

\begin{remark}\label{remark:dont_need_eigenvalues}
Note that the very reasonable assumption that the eigenvalues of the
operator $\mathcal{A}$ are known is in fact partially redundant:
these can be obtained from the above argument in the case of one spatial
dimension and up to multiplicity in the case of higher space dimensions.
\end{remark}
\begin{remark}\label{remark:get_b_c}
Although the ability to obtain the pole locations and hence $\{\lambda_j\}$
given the values of $b$ and $c^2$
from the functions $\hat w_j(s)$ appearing in equation~\eqref{eqn:z0jhat2}
is redundant, we can nevertheless use the fact that $\{\lambda_j\}$ is
already assumed known to obtain the values of both $b$ and $c$ from
the Laplace transform of the overposed data $h(t)$.
Once again, this is an argument based on analytic continuation of
an analytic function and hence severely ill-conditioned.
\end{remark}

\section{Regularization methods}\label{sec:regmeth}

In this section we revisit some of the commonly used regularisation paradigms and discuss their application to the inverse problems of nonlinearity imaging.
To this end, we will write the inverse problem as an operator equation
\[
F(\kappa)=h
\]
where $F:X\to L^2(0,T;L^2(\Sigma))$ is the forward operator analyzed in Section \oldref{sec:analysis_forward} and we consider the choices $X=\dot{H}^s(\Omega)$ or $X=L^\infty(\Omega)$. In place of the exact time trace data $h$, we are given a noisy version $h^\delta$ or actually usually only its sample at a finite number of time instances, which is the setting we use in our numerical experiments in Section \oldref{sec:reconstructions}. The superscript $\delta$ indicates the noise level in
\begin{equation}\label{eqn:delta}
\|F(\kappa_{act})-h^\delta\|\leq\delta
\end{equation}
where $\kappa_{act}$ denotes an exact solution of the inverse problem, i.e., such that $F(\kappa_{act})=h$.

\subsection{Tikhonov regularisation}
\Margin{rep C, 7.}
Certainly the most well-known, most widely used, and -- via the choice of data misfit and regularisation functionals -- most versatile approach is Tikhonov-Philips regularisation.

Using norms for both functionals we define a regulariser $\kappa_\alpha$ by minimizing $J_\alpha$ given by
\[
J_\alpha(\kappa)=\frac12\|F(\kappa)-h^\delta\|_{L^2(0,T;L^2(\Sigma))}^2+\frac{\alpha}{2}\|\kappa-\kappa_0\|_X^2\,,
\]
where $\kappa_0$ is an {\it a priori\/} guess and we either choose
$X=\dot{H}^s(\Omega)$ with $s>\frac{d}{2}$ to be a Hilbert space that is
embedded in $L^\infty(\Omega)$, or directly use the Banach space
$X=L^\infty(\Omega)$, although this makes the minimization of $J_\alpha$
more challenging, due to additional factors of nonlinearity and nonsmoothness.

The actual computation of the Tikhonov regulariser will usually be based on
descent methods using the gradient of the cost function $J_\alpha$.
The use of the Hessian in principle gives better search directions but comes
at a high computational cost, as we also point out in
Section~\oldref{sec:reconstructions} in the context of Halley's method.
The gradient of $J_\alpha$ in the case of $X=\dot{H}^s(\Omega)$ is given by
\[
J_\alpha'(\kappa)=F'(\kappa)^*(F(\kappa)-h)+\alpha\revision{(\kappa-\kappa_0)}\,,
\]
\Margin{rep C, 5.}
with the Hilbert space adjoint as in Proposition \oldref{prop:adj},
\revision{see also the lines following \eqref{eqn:LW} for its implementation in the context of the inverse problem under consideration here.}
For the use of $X=L^\infty(\Omega)$, considering the special case $s=0$
in Proposition \oldref{prop:adj}, we immediately obtain the
$L^\infty(\Omega)$-$L^2(0,T;L^2(\Omega))$ Banach space adjoint of $F'(\kappa)$,
which by  applying a duality mapping on $L^\infty(\Omega)$, in principle allows
one to construct the gradient of $J_\alpha$.
\revision{
However, convergence of gradient type methods in nonreflexive Banach spaces is in general a highly nontrivial question.
If $X$ is the dual of a separable space, as is the case for $X=L^\infty(\Omega)$, the Tikhonov minimization problem can be tackled by means of a duality approach. This also naturally leads to an appropriate discretization that lends itself to iterative minimization by, e.g., a semismooth Newton method, see \cite{CK11,CK18}.
}
\Margin{rep C, 6.}

Propositions \oldref{prop:forward_welldef}, \oldref{prop:Fseqclos} together with \cite[Theorem 1]{SeidmanVogel89}, \cite[Theorem 2.3]{EKN89} or \cite[Theorems 3.1, 3.5]{HKPS07} respectively, yield well-definedness and convergence of Tikhonov regularisation.

\begin{corollary}
Tikhonov regularisation with $X=\dot{H}^s(\Omega)$, $s>\frac{d}{2}$ or $X=L^\infty(\Omega)$ is well-defined and converges in the sense that for data $h^\delta$ satisfying
\begin{equation*} \|h^\delta-\mbox{tr}_\Sigma p_{act}\|_{L^2(0,T;L^2(\Sigma))}\leq\delta
\end{equation*}
 and
\begin{equation*}
\alpha(\delta)\to0, \frac{\delta}{\alpha(\delta)}\to0
\end{equation*}
we have subsequential\footnote{every sequence has a convergent subsequence and the limit of every subsequence is a solution to the inverse problem; in case of uniqueness of $\kappa_{act}$, the whole sequence converges} convergence of $\kappa_{\alpha(\delta)}$ to $\kappa_{act}$ as $\delta\to0$ where convergence takes place strongly in case of $X=\dot{H}^s(\Omega)$, and weakly* in case of $X=L^\infty(\Omega)$.
\end{corollary}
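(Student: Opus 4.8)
The plan is to verify that the two structural properties established earlier---well-definedness and boundedness of $F$ on $\mathcal{D}(F)=B_R(0)$ (Proposition~\oldref{prop:forward_welldef}) and its weak$^*$ sequential closedness (Proposition~\oldref{prop:Fseqclos})---are exactly the hypotheses required to invoke the abstract convergence theory for variational regularisation in \cite{SeidmanVogel89,EKN89,HKPS07}, and then to read off the asserted mode of convergence in each of the two choices of $X$. The common topological setting is the weak$^*$ topology: for $X=L^\infty(\Omega)=(L^1(\Omega))^*$ the predual $L^1(\Omega)$ is separable, so by Banach--Alaoglu bounded sets are weak$^*$ sequentially compact; for $X=\dot{H}^s(\Omega)$ with $s>d/2$ we are in a reflexive Hilbert space, weak and weak$^*$ convergence coincide, and the continuous Sobolev embedding $\dot{H}^s(\Omega)\hookrightarrow L^\infty(\Omega)$ transfers weak convergence in $X$ to weak$^*$ convergence in $L^\infty(\Omega)$, so that Proposition~\oldref{prop:Fseqclos} remains applicable, as already noted after its statement.

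First I would establish existence of a minimiser $\kappa_\alpha$ of $J_\alpha$, i.e.\ well-definedness. Taking a minimising sequence $(\kappa_n)\subseteq\mathcal{D}(F)$, boundedness of $J_\alpha(\kappa_n)$ bounds the penalty $\|\kappa_n-\kappa_0\|_X^2$, hence $(\kappa_n)$ is bounded in $X$; weak$^*$ sequential compactness then yields a subsequence $\kappa_n\stackrel{*}{\rightharpoonup}\bar\kappa$, and since $B_R(0)$ is a weak$^*$ closed ball we have $\bar\kappa\in\mathcal{D}(F)$. By Proposition~\oldref{prop:forward_welldef} the images $F(\kappa_n)$ are bounded in $Y$, so along a further subsequence $F(\kappa_n)\stackrel{*}{\rightharpoonup}g$, and Proposition~\oldref{prop:Fseqclos} forces $F(\bar\kappa)=g$. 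Weak$^*$ lower semicontinuity of the norm applied to both the data misfit and the penalty then gives $J_\alpha(\bar\kappa)\le\liminf_n J_\alpha(\kappa_n)=\inf J_\alpha$, so $\bar\kappa$ is a minimiser.

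Next I would carry out the standard convergence argument as $\delta\to0$. Comparing the minimiser with the exact solution yields
\begin{equation*}
\|F(\kappa_{\alpha(\delta)})-h^\delta\|_Y^2+\alpha(\delta)\|\kappa_{\alpha(\delta)}-\kappa_0\|_X^2
\le \delta^2+\alpha(\delta)\|\kappa_{act}-\kappa_0\|_X^2\,.
\end{equation*}
The first term is bounded by $\delta^2+\alpha(\delta)\|\kappa_{act}-\kappa_0\|_X^2\to0$, forcing the residual to zero, while dividing by $\alpha(\delta)$ and using $\delta^2/\alpha(\delta)=\delta\cdot\delta/\alpha(\delta)\to0$ (a consequence of the parameter choice $\delta/\alpha(\delta)\to0$) keeps $\|\kappa_{\alpha(\delta)}-\kappa_0\|_X$ bounded. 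Extracting a weak$^*$ convergent subsequence $\kappa_{\alpha(\delta)}\stackrel{*}{\rightharpoonup}\kappa^*\in\mathcal{D}(F)$, weak$^*$ closedness of $F$ together with $\|F(\kappa_{\alpha(\delta)})-h^\delta\|_Y\to0$ identifies $F(\kappa^*)=\mbox{tr}_\Sigma p_{act}$, so $\kappa^*$ solves the inverse problem, and lower semicontinuity of $\|\cdot-\kappa_0\|_X$ shows it is a $\kappa_0$-minimum-distance solution.

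The main subtlety---and the point where the two cases genuinely differ---is the mode of convergence. In the Hilbert case $X=\dot{H}^s(\Omega)$ the minimum-distance characterisation upgrades the norm inequality $\|\kappa^*-\kappa_0\|_X\le\liminf\|\kappa_{\alpha(\delta)}-\kappa_0\|_X$ to convergence of norms, and weak convergence together with convergence of norms yields strong convergence by the Radon--Riesz property of Hilbert spaces. In the nonreflexive case $X=L^\infty(\Omega)$ no such property is available, so only weak$^*$ convergence can be claimed, precisely as stated. The genuinely hard ingredient is not any of these routine steps but the weak$^*$ sequential closedness of $F$ delivered by Proposition~\oldref{prop:Fseqclos}; everything else is the by-now-classical verification of the hypotheses of \cite[Theorem~1]{SeidmanVogel89}, \cite[Theorem~2.3]{EKN89} and \cite[Theorems~3.1,~3.5]{HKPS07}.
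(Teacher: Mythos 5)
Your proposal is correct and follows essentially the same route as the paper, which simply invokes Propositions~\oldref{prop:forward_welldef} and~\oldref{prop:Fseqclos} together with the cited abstract results of Seidman--Vogel, Engl--Kunisch--Neubauer and Hofmann--Kaltenbacher--P\"oschl--Scherzer; you have merely unpacked what those theorems do (weak$^*$ compactness of bounded sets, weak$^*$ sequential closedness of $F$, the standard $J_\alpha$ comparison, and the Radon--Riesz upgrade to strong convergence in the Hilbert case). No gaps.
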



\medskip

An alternative option is to use $X=L^2(\Omega)$ and constrain minimization to a subset $\{\kappa\in L^2(\Omega)\, : \, \underline{\kappa}\leq\kappa\leq\overline{\kappa}\}$ of $L^\infty(\Omega)$, see \cite{Neubauer88}.

\subsection{Iterative methods}

As already mentioned in the previous section, any variational regularisation method such as Tikhonov, Ivanov or Morozov regularisation and versions thereof, that are based on minimization of some cost functional, require employment of some iterative descent algorithm for their numerical implementation.
Alternatively, one can directly use iterative solution methods, equipped with some regularisation, as reconstruction methods.

Due to the above mentioned difficulties with constructing duality mappings in $L^\infty(\Omega)$, we here consider iterative methods in a Hilbert space setting $X=\dot{H}^s(\Omega)$ only.

\subsubsection{Landweber iteration}\label{sec:LW}
The most simple iterative approach consists of performing gradient descent for the least squares functional $\frac12\|F(\kappa)-h^\delta\|^2$, which results in Landweber iteration
\begin{equation}\label{eqn:LW}
\begin{aligned}
&\kappa_{n+1}=\kappa_n+\mu_n F'(\kappa_n)^*\bigl(h^\delta-F(\kappa_n)\bigr).
\end{aligned}
\end{equation}

Therefore, one step of Landweber iteration (see, e.g., \cite{HNS95}) reads as follows
\begin{itemize}
\item given $\kappa=\kappa_n$, solve \eqref{eqn:Westervelt_p} to obtain $p$;
\item set $y=h^\delta-p$;
\item solve \eqref{eqn:a} to obtain $a$ and set $\delta\kappa=\mathcal{A}^{-s}\int_0^Ta(t)(p^2)_{tt}(t)\, dt$;
\item choose a stepsize $\mu_n$ and set $\kappa_{n+1}=\kappa_n+\mu_n\delta\kappa$.
\end{itemize}

Freezing the derivative at a fixed argument, e.g., the starting value, still yields a fixed point iteration, which one might call {\it frozen Landweber}:
\begin{equation}\label{eqn:frozenLW}
\begin{aligned}
&\kappa_{n+1}=\kappa_n+\mu_n F'(\kappa_0)^* (h^\delta-F(\kappa_n))
\,.
\end{aligned}
\end{equation}

By Propositions \oldref{prop:forward_welldef}, \oldref{prop:forward_diff}, \oldref{prop:adj}, Landweber iteration is well-defined.
As they stand, the Landweber steps do not require further regularisation (although there do exist modified versions that also contain regularisation terms, see, e.g., \cite{Sche98}).
Still, in order to avoid explosion of the noise propagated through the iterations, one has to stop after an appropriately chosen number of steps.
A well-established criterion for this purpose is the discrepancy principle, which terminates the iteration as soon as the residual is of the order of the noise level.
\begin{equation}\label{eqn:discrprinc}
k_*=\min\{k\in \mathbb{N}\, : \, \|F(\kappa)-h^\delta\|\leq\tau\delta\}\,,
\end{equation}
for some fixed safety factor $\tau>1$. (In our computations we used $\tau=2$ which is known to be the minimal choice for nonlinear problems \cite{HNS95}.)
The availability of the noise level $\delta$ in practical applications might be
debatable; yet its necessity for establishing convergence guarantees is known
(as Bakushinski's veto \cite{Bakushinski1984}, see also \cite[Theorem 3.3]{EnglHankeNeubauer:1996}).
The stepsize $\mu_k$ might be chosen as a sufficiently small constant cf. \cite{HNS95}, or adapted in each step, cf., e.g., \cite{NeSc95}.

Landweber iteration is known to be notoriously slow. However, with the ingredients above, also state-of the art accelerated versions of Landweber iteration, such as the steepest descent or the minimal error method \cite{NeSc95} or Nesterov iteration and versions thereof \cite{HubmerRamlau2018, Neubauer2017} can be implemented in a straightforward manner.

Also note that alternatively to choosing $s$ is sufficiently large to
enforce continuity of the embedding $H^s(\Omega)\to L^\infty(\Omega)$,
we can just take $s=0$, that is, $X=L^2(\Omega)$, and project the regularised
iterates into an $L^\infty(\Omega)$ ball in order to guarantee well-definedness
of $F$ and $F'$ according to Propositions~\oldref{prop:forward_welldef},
\oldref{prop:forward_diff}.

\subsubsection{Newton's method}\label{sec:Newton}
Significantly faster methods result from using the first order Taylor expansion of the forward operator which results in Newton's method
\begin{equation}\label{eqn:Newton}
\begin{aligned}
&\kappa_{n+1}=\kappa_n+F'(\kappa_n)^{-1} (h-F(\kappa_n))
\end{aligned}
\end{equation}
or its frozen version
\begin{equation}\label{eqn:frozenNewton}
\begin{aligned}
&\kappa_{n+1}=\kappa_n+F'(\kappa_0)^{-1} (h-F(\kappa_n))
\,.
\end{aligned}
\end{equation}
In the context of nonlinearity imaging, $F(\kappa_n)=\mbox{tr}_\Sigma p$ with $p$ solving \eqref{eqn:Westervelt_p} and $F'(\kappa_n)[\underline{d\kappa}]=\mbox{tr}_\Sigma z$ with $z$ solving \eqref{eqn:z}.
In particular, in the frozen Newton case at $\kappa=0$, both equations needed to evaluate $F'(0)[\underline{d\kappa}]=\mbox{tr}_\Sigma z_0$, namely \eqref{eqn:p0} and \eqref{eqn:z0} are linear, while of course we still have to solve the nonlinear {\sc pde} to obtain $F(\kappa_n)$.

By Propositions \oldref{prop:forward_welldef}, \oldref{prop:forward_diff}, and Theorem \oldref{th:lininj}, Newton's method is well-defined; however, in case of noisy data $h^\delta\approx h$ it needs regularisation.
Regularized versions of Newton usually rely on Tikhonov's method applied to the linearised problem, which in a Hilbert space setting leads to the Levenberg-Marquardt method
\begin{equation}\label{eqn:regNewton}
\begin{aligned}
&\kappa_{n+1}=\kappa_n+(F'(\kappa_n)^*F'(\kappa_n)+\alpha_nI)^{-1} F'(\kappa_n)^*(h^\delta-F(\kappa_n))
\end{aligned}
\end{equation}
or its frozen version
\begin{equation}\label{eqn:regfrozenNewton}
\begin{aligned}
&\kappa_{n+1}=\kappa_n+(F'(\kappa_0)^*F'(\kappa_0)+\alpha_nI)^{-1} F'(\kappa_0)^*(h^\delta-F(\kappa_n))
\,.
\end{aligned}
\end{equation}
cf., e.g., \cite{Hanke97,Ried01} or the iteratively regularised Gauss-Newton method, cf., e.g., \cite{Baku92,BNS97,Hoha97}.

Here $\alpha_k$ is a sequence of regularisation parameters that can be chosen a posteriori according to some inexact Newton strategy (see \cite{Hanke97}) or simply as a geometrically decaying sequence $\alpha_n=\theta^n\alpha_0$.
The overall iteration again needs to be appropriately stopped, e.g. by the discrepancy principle \eqref{eqn:discrprinc}.

\subsubsection{Halley's method}\label{sec:Halley}
An even faster iterative method can be achieved by including Hessian information on the forward operator, which leads to Halley's method:
\begin{equation}\label{eqn:Halley}
\begin{aligned}
&\kappa_{n+\frac12}=\kappa_n+F'(\kappa_n)^{-1} (h-F(\kappa_n))\\
&\kappa_{n+1}=\kappa_n+(F'(\kappa_n)+\tfrac12 F''(\kappa_n)[\kappa_{n+\frac12}-\kappa_n,\cdot])^{-1} (h-F(\kappa_n))\,.
\end{aligned}
\end{equation}
Here it can also make sense to freeze evaluation of $F'$ and $F''$ to some
point $\kappa_0$:
\begin{equation}\label{eqn:frozenHalley}
\begin{aligned}
&\kappa_{n+\frac12}=\kappa_n+F'(\kappa_0)^{-1} (h-F(\kappa_n))\\
&\kappa_{n+1}=\kappa_n+(F'(\kappa_0)+\tfrac12 F''(\kappa_0)[\kappa_{n+\frac12}-\kappa_n,\cdot])^{-1} (h-F(\kappa_n))\,.
\end{aligned}
\end{equation}
In this case, the function values and derivatives of $F$ can be computed as
follows
\begin{itemize}
\item
$F(\kappa_n)=\mbox{tr}_\Sigma p$ with $p$ solving \eqref{eqn:Westervelt_p};
\item
$F'(\kappa_0)[\underline{d\kappa}]=\mbox{tr}_\Sigma z$ with $z$ solving \eqref{eqn:z} with $\kappa=\kappa_0$, $p=p_0$;
\item
$F''(\kappa_0)[\underline{d\kappa}^{(1)},\underline{d\kappa}^{(2)}]=\mbox{tr}_\Sigma w$ with $w$ solving
\begin{equation}\label{eqn:Fpp}
\Bigl((1-2\kappa_0 p_0) w\Bigr)_{tt} + b\mathcal{A} w_t +c^2\mathcal{A} w =
2\Bigl( \kappa_0 \, z^{(1)} \, z^{(2)} + p_0 ( \underline{d\kappa}^{(1)} z^{(2)} + \underline{d\kappa}^{(2)} z^{(1)})\Bigr)_{tt}\,,
\end{equation}
and $z^{(i)}$ satisfying \eqref{eqn:z} with $\kappa=\kappa_0$, $p=p_0$, $\underline{d\kappa}=\underline{d\kappa}^{(i)}$.
\end{itemize}
In particular note that the evaluation of $F'$ and $F''$ at $\kappa_n$ lead to linear initial boundary value problems for the same {\sc pde} just with different right hand sides.

Halley's method also needs to be regularised when used with noisy data,
cf. \cite{HeRu00,BK15}.

\bigskip

\begin{remark}\label{rem:conv_iter}
Some remarks on convergence of iterative regularisation methods for the inverse
problem under consideration are in order.
Restrictions on the nonlinearity of $F$ such as the tangential cone condition
are not likely to hold here, in view of the fact that only boundary
observations are available.
Thus, convergence of Newton or gradient type (Landweber) regularisation methods cannot be proven without assuming further regularity.
\revision{According to, e.g., \cite[Theorems 3.17 and 4.12]{KNS08}, for the regularised Landweber iteration and the iteratively regularized Gauss-Newton method, respectively,}
\Margin{rep C, 8.}
there is still the option of obtaining convergence (with rates) under a source condition
$\kappa_{act}-\kappa_0=F'(\kappa)^*w$ for some $w\in Y$ assuming (instead of
the tangential cone condition) only Lipschitz continuity of $F'$, cf. Remark \oldref{rem:contFprime}.
However due to the severe ill-posedness, that is, the infinite smoothing
properties of $F'(\kappa)^*$, this would only yield convergence (with rates)
in case of infinitely smooth initial error
$\kappa_{act}-\kappa_0=F'(\kappa)^*w$.

\end{remark}

\section{Reconstructions}\label{sec:reconstructions}

In this section we show reconstructions of $\kappa$ from time trace data.
The spatial set will be the interval $[0,1]$ and we will take the measurement
point of $h(t)=p(1,t)$ to be the right-hand endpoint $x=1$.

Our numerical implementation uses \eqref{eqn:Westervelt_p}
in the transformed version as in \eqref{eqn:Westervelt_p_int}
and so treat it as a parabolic equation with nonlocal memory
term $c^2\int_0^t \triangle p(\tau)\,d\tau$.
A Crank-Nicolson integrator was used with an inner iteration loop
to handle the nonlinear term $\,-2\kappa\,p\,p_t$.
A Neumann boundary condition was imposed at the right hand endpoint;
the left hand condition could be Dirichlet, Neumann or impedance type.
Typically, in the physical model one would have zero initial conditions but
this isn't necessary for the mathematical formulation.

Data consisted of the time trace measurement $h(t) = p(1,t)$.
As a practical matter we used the above mentioned solver to obtain this data
and collected a sample at $50$ equally spaced points on the interval $[0,T]$.
Uniformly distributed random noise was then added to these values to obtain
$h_{\rm meas}(t)$.
This was then pre-filtered by smoothing and up-resolving to the working
resolution of the number of time points taken ($\sim$400 for the interval
$t\in [0,1]$) for the direct solver used in the inversion routine.
Our reconstructions are mainly
based on two noise levels: $0.1\%$ and $1\%$, which in view of the exponential
ill-posedness appear to be in the most reasonable range.

The unknown $\kappa$ was represented in terms of given basis functions.
Since we wish to make no constraints on the form of
$\kappa$ other than sufficient regularity and positivity,
we do not choose a basis with in-built restrictions as would be
obtained from an eigenfunction expansion.
Instead we used a radial basis set consisting of either
shifted Gaussian functions $b_j(x) := e^{-(x-x_j)^2/\sigma}$
centered at nodal points  $\{x_j\}$
and with width specified by the parameter $\sigma$ or,
since we also wish to reconstruct non-smooth $\kappa$,
we also used chapeau piecewise linear functions.
Many of the figures shown below use the latter type and the interval
$x\in [0,1]$ contained $41$ such basis functions.
We also show a reconstruction
of a piecewise constant $\kappa$ since this situation is physically
meaningful and for this case we also adopted a Haar basis.
In all cases the starting approximation for the iterative methods used
was the value $\kappa=0$.

The assumption of $1-2\kappa p$ being positive and bounded away from zero is easily fulfilled in practice, see, e.g., \cite[Section 2.1]{MNWW19}. Also in our numerical experiments we chose coefficient functions $\kappa$ and excitations $r$ such that this condition holds.

From a physical standpoint $\kappa(x)$ should be nonnegative and in some cases
(for example in Figure~\oldref{fig:Newton_iteration_kappa6}) we forced this
situation by truncating all negative values to zero at the end of
each iteration of the scheme.
Typically, the need for this decreased as the iteration scheme progressed.

The Newton scheme as described in Section~\oldref{sec:Newton}, in particular
the frozen version about $\kappa=0$, performed reliably and
converged rapidly for a wide range of trial $\kappa$ functions.
Regularization was by the Tikhonov method with regularisation parameter
$\alpha_k=0.5^k\alpha_0$ and as a stopping criterion we used the
Discrepancy Principle~\eqref{eqn:discrprinc}.

The Landweber scheme from Section~\oldref{sec:LW} was also effective and
able to give reconstructions under higher noise levels than Newton.
We do not dwell on this situation as the convergence rate was indeed
notoriously slow and we did not implement any
of the possible acceleration schemes mentioned earlier.

We also show reconstructions using Halley's method, again frozen about a fixed
$\kappa$ value (here $\kappa=0$) taking the predictor-corrector approach
of \cite{HeRu00} and as shown in equation~\eqref{eqn:frozenHalley}.

\input colordvi
%
%
%
\font\tenrm=cmr10
\font\teni=cmmi10 \skewchar\teni='177
\font\tensy=cmsy10 \skewchar\tensy='60
\font\tenex=cmex10
\font\tenit=cmti10
\font\tensl=cmsl10
\font\tenbf=cmbx10
\font\tentt=cmtt10
\font\ninerm=cmr9
\font\ninei=cmmi9 \skewchar\ninei='177
\font\ninesy=cmsy9 \skewchar\ninesy='60
\font\nineit=cmti9
\font\ninesl=cmsl9
\font\ninebf=cmbx9
\font\ninett=cmtt9
\font\eightrm=cmr8
\font\eighti=cmmi8 \skewchar\eighti='177
\font\eightsy=cmsy8 \skewchar\eightsy='60
\font\eightit=cmti8
\font\eightsl=cmsl8
\font\eightbf=cmbx8
\font\eighttt=cmtt8
\font\sevenrm=cmr7
\font\seveni=cmmi7 \skewchar\seveni='177
\font\sevensy=cmsy7 \skewchar\sevensy='60
\font\sevenbf=cmbx7
\font\sevenit=cmmi7
\font\sevensl=cmmi7
\font\seventt=cmr7
\font\sixrm=cmr6
\font\sixi=cmmi6 \skewchar\sixi='177
\font\sixsy=cmsy6 \skewchar\sixsy='60
\font\sixbf=cmbx6
\font\fiverm=cmr5
\font\fivei=cmmi5 \skewchar\fivei='177
\font\fivesy=cmsy5 \skewchar\fivesy='60
\font\fivebf=cmbx5
\def\tenpoint{\def\rm{\fam0\tenrm}%
        \textfont0=\tenrm \scriptfont0=\sevenrm \scriptscriptfont0=\fiverm
        \textfont1=\teni \scriptfont1=\seveni \scriptscriptfont1=\fivei
        \textfont2=\tensy \scriptfont2=\sevensy \scriptscriptfont2=\fivesy
        \textfont3=\tenex \scriptfont3=\tenex \scriptscriptfont3=\tenex
        \def\it{\fam\itfam\tenit}%
        \textfont\itfam=\tenit
        \def\sl{\fam\slfam\tensl}%
        \textfont\slfam=\tensl
        \def\bf{\fam\bffam\tenbf}%
        \textfont\bffam=\tenbf \scriptfont\bffam=\sevenbf
                \scriptscriptfont\bffam=\fivebf
        \def\tt{\fam\ttfam\tentt}%
        \textfont\ttfam=\tentt
        \normalbaselineskip=12pt%
        \let\sc=\eightrm        
        \setbox\strutbox=\hbox{\vrule height8.5pt depth3.5pt width0pt}%
        \normalbaselines\rm}
\def\ninepoint{\def\rm{\fam0\ninerm}%
        \textfont0=\ninerm \scriptfont0=\sixrm \scriptscriptfont0=\fiverm
        \textfont1=\ninei \scriptfont1=\sixi \scriptscriptfont1=\fivei
        \textfont2=\ninesy \scriptfont2=\sixsy \scriptscriptfont2=\fivesy
        \textfont3=\tenex \scriptfont3=\tenex \scriptscriptfont3=\tenex
        \def\it{\fam\itfam\nineit}%
        \textfont\itfam=\nineit
        \def\sl{\fam\slfam\ninesl}%
        \textfont\slfam=\ninesl
        \def\bf{\fam\bffam\ninebf}%
        \textfont\bffam=\ninebf \scriptfont\bffam=\sixbf
                \scriptscriptfont\bffam=\fivebf
        \def\tt{\fam\ttfam\ninett}%
        \textfont\ttfam=\ninett
        \normalbaselineskip=11pt%
        \let\sc=\sevenrm        
        \setbox\strutbox=\hbox{\vrule height8pt depth3pt width0pt}%
        \normalbaselines\rm}
\def\eightpoint{\def\rm{\fam0\eightrm}%
        \textfont0=\eightrm \scriptfont0=\sixrm \scriptscriptfont0=\fiverm
        \textfont1=\eighti \scriptfont1=\sixi \scriptscriptfont1=\fivei
        \textfont2=\eightsy \scriptfont2=\sixsy \scriptscriptfont2=\fivesy
        \textfont3=\tenex \scriptfont3=\tenex \scriptscriptfont3=\tenex
        \def\it{\fam\itfam\eightit}%
        \textfont\itfam=\eightit
        \def\sl{\fam\slfam\eightsl}%
        \textfont\slfam=\eightsl
        \def\bf{\fam\bffam\eightbf}%
        \textfont\bffam=\eightbf \scriptfont\bffam=\sixbf
                \scriptscriptfont\bffam=\fivebf
        \def\tt{\fam\ttfam\eighttt}%
        \textfont\ttfam=\eighttt
        \normalbaselineskip=9pt%
        \let\sc=\sixrm  
        \setbox\strutbox=\hbox{\vrule height7pt depth2pt width0pt}%
        \normalbaselines\rm}
\def\sevenpoint{\def\rm{\fam0\sevenrm}%
        \textfont0=\sevenrm \scriptfont0=\fiverm \scriptscriptfont0=\fiverm
        \textfont1=\seveni \scriptfont1=\fivei \scriptscriptfont1=\fivei
        \textfont2=\sevensy \scriptfont2=\fivesy \scriptscriptfont2=\fivesy
        \textfont3=\tenex \scriptfont3=\tenex \scriptscriptfont3=\tenex
        \def\it{\fam\itfam\sevenit}%
        \textfont\itfam=\sevenit
        \def\sl{\fam\slfam\sevensl}%
        \textfont\slfam=\sevensl
        \def\bf{\fam\bffam\sevenbf}%
        \textfont\bffam=\sevenbf \scriptfont\bffam=\fivebf
                \scriptscriptfont\bffam=\fivebf
        \def\tt{\fam\ttfam\seventt}%
        \textfont\ttfam=\seventt
        \normalbaselineskip=8pt%
        \let\sc=\fiverm  
        \setbox\strutbox=\hbox{\vrule height6pt depth2pt width0pt}%
        \normalbaselines\rm}

\font\smallsymbol = cmmi8
\newdimen\xfiglen \newdimen\yfiglen
\xfiglen=2 true in
\yfiglen=10 true in
\newbox\figurelegend
\newbox\figurelegendtwo
\newbox\figurelegendthree
\newbox\figurelegendfour
\newbox\figurelegendfive
\newbox\figureone
\newbox\figuretwo
\newbox\figurethree
\newbox\figurefour
\newbox\figurefive
\newbox\figuresix
\newbox\figureseven
\newbox\figureeight
\newbox\figurenine
\newbox\figureten
\newbox\figureleven
\newbox\figuretwelve
\newbox\figurethirteen
\newbox\figurefourteen
\newbox\figurefifteen


\setbox\figurelegend=\hbox{
\small
\beginpicture
  \setcoordinatesystem units <0.25\xfiglen,0.05\yfiglen> 
  \setplotarea x from 0 to 0.8, y from 0 to 0.6
\linethickness=0.5pt
\sevenrm
\footnotesize
  \setsolid
  \putrule from 0 0.6 to 0.2 0.6 
  \put {$\kappa_{\rm act}$} [l] at 0.3 0.6
  \setdashes <3pt>
  \Blue{\putrule from 0 0.4 to 0.2 0.4 }\relax
  \put {\Blue{Iter 1}} [l] at 0.3 0.4
  \setdots <2pt>
  \Green{\putrule from 0 0.2 to 0.2 0.2 }\relax
  \put {\Green{Iter 2}} [l] at 0.3 0.2
  \setsolid
  \Red{\putrule from 0 0.0 to 0.2 0.0 }\relax
  \put {\Red{Iter 3}} [l] at 0.3 0.0
\endpicture
}
\setbox\figurelegendtwo=\hbox{
\small
\beginpicture
  \setcoordinatesystem units <0.4\xfiglen,0.12\yfiglen> 
  \setplotarea x from 0 to 1, y from 0 to 1
\linethickness=0.5pt
\footnotesize
  \setsolid
  \putrule from 0 1 to 0.3 1 
  \put {$\kappa_{\rm act}$} [l] at 0.4 1
\setplotsymbol ({\tiny{\Red{$\circ$}}})
\plotsymbolspacing 8pt
\plot 0 0.9  0.1 0.9  0.2 0.9  0.3 0.9 0.4 0.9 /  \put {Iter 1} [l] at 0.6 0.9
\setplotsymbol ({\tiny{\Green{$\circ$}}})
\plotsymbolspacing 8pt
\plot 0 0.8  0.1 0.8  0.2 0.8  0.3 0.8  0.4 0.8 / \put {Iter 10} [l] at 0.6 0.8
\setplotsymbol ({\tiny{\Blue{$\circ$}}})
\plotsymbolspacing 8pt
\plot 0 0.7  0.1 0.7  0.2 0.7  0.3 0.7 0.4 0.7 /  \put {Iter 100} [l] at 0.6 0.7
\setplotsymbol ({\tiny{\Red{$\star$}}})
\plotsymbolspacing 8pt
\plot 0 0.6  0.1 0.6  0.2 0.6  0.3 0.6  0.4 0.6 / \put {Iter 200} [l] at 0.6 0.6
\setplotsymbol ({\tiny{\Green{$\star$}}})
\plotsymbolspacing 8pt
\plot 0 0.5  0.1 0.5  0.2 0.5  0.3 0.5  0.4 0.5 / \put {Iter 500} [l] at 0.6 0.5
\setplotsymbol ({\tiny{\Blue{$\star$}}})
\plotsymbolspacing 8pt
\plot 0 0.4  0.1 0.4  0.2 0.4  0.3 0.4 0.4 0.4 / \put {Iter 1000} [l] at 0.6 0.4
\setplotsymbol ({\tiny{\Red{$\bullet$}}})
\plotsymbolspacing 8pt
\plot 0 0.3  0.1 0.3  0.2 0.3  0.3 0.3 0.4 0.3 / \put {Iter 2000} [l] at 0.6 0.3
\setplotsymbol ({\tiny{\Green{$\bullet$}}})
\plotsymbolspacing 8pt
\plot 0 0.2  0.1 0.2  0.2 0.2  0.3 0.2 0.4 0.2 / \put {Iter 3000} [l] at 0.6 0.2
\setplotsymbol ({\tiny{\Blue{$\bullet$}}})
\plotsymbolspacing 8pt
\plot 0 0.1  0.1 0.1  0.2 0.1  0.3 0.1  0.4 0.1 /\put {Iter 5000} [l] at 0.6 0.1
\endpicture
}
\setbox\figurelegendthree=\hbox{
\beginpicture
  \setcoordinatesystem units <0.25\xfiglen,0.01\yfiglen> 
  \setplotarea x from 0 to 1, y from 0 to 5
\linethickness=0.5pt
\scriptsize
  \setsolid
  \putrule from 0 5 to 0.3 5 
  \put {$\kappa_{\rm act}$} [l] at 0.4 5
  \setdots <2pt>
  \Blue{\putrule from 0 4 to 0.3 4 }\relax
  \put {it 1} [l] at 0.4 4
  \setsolid
  \LimeGreen{\putrule from 0 3 to 0.3 3 }\relax
  \put {it 10} [l] at 0.4 3
  \Orange{\putrule from 0 2 to 0.3 2 }\relax
  \put {it 100} [l] at 0.4 2
  \setdashes <3pt>
  \Brown{\putrule from 0 1 to 0.3 1 }\relax
  \put {it 200} [l] at 0.4 1
  \setsolid
  \BrickRed{\putrule from 0 0 to 0.3 0 }\relax
  \put {it 500} [l] at 0.4 0
\endpicture
}
\setbox\figurelegendfour=\hbox{
\beginpicture
  \setcoordinatesystem units <0.25\xfiglen,0.01\yfiglen> 
  \setplotarea x from 0 to 1, y from 2 to 5
\linethickness=0.5pt
\scriptsize
  \setsolid
  \putrule from 0 5 to 0.3 5 
  \put {$\kappa_{\rm act}$} [l] at 0.4 5
  \setsolid
  \Blue{\putrule from 0 4 to 0.3 4 }\relax
  \put {it 1000} [l] at 0.4 4
  \setsolid
  \Red{\putrule from 0 3 to 0.3 3 }\relax
  \put {it 2000} [l] at 0.4 3
  \Goldenrod{\putrule from 0 2 to 0.3 2 }\relax
  \put {it 5000} [l] at 0.4 2
\endpicture
}
\setbox\figurelegendfive=\hbox{
\beginpicture
  \setcoordinatesystem units <0.25\xfiglen,0.008\yfiglen> 
  \setplotarea x from 0 to 1, y from 0 to 5
\linethickness=0.5pt
\scriptsize
  \setsolid
  \putrule from 0 5 to 0.3 5 
  \put {$\kappa_{\rm act}$} [l] at 0.4 5
  \setsolid
  \Blue{\putrule from 0 4 to 0.3 4 }\relax
  \put {it 1} [l] at 0.4 4
  \setdots <2pt>
  \Orange{\putrule from 0 3 to 0.3 3 }\relax
  \put {it 10} [l] at 0.4 3
  \setsolid
  \OliveGreen{\putrule from 0 2 to 0.3 2 }\relax
  \put {it 100} [l] at 0.4 2
  \Goldenrod{\putrule from 0 1 to 0.3 1 }\relax
  \put {it 1000} [l] at 0.4 1
  \Red{\putrule from 0 0 to 0.3 0 }\relax
  \put {it 10000} [l] at 0.4 0
\endpicture
}
%
%
\yfiglen=16 true in
\setbox\figureone=\vbox{\hsize=\xfiglen
\beginpicture
\eightrm
  \setcoordinatesystem units <\xfiglen,\yfiglen> 
  \setplotarea x from 0 to 1, y from 0 to 0.13
  \axis bottom shiftedto y=0 ticks short numbered from 0 to 1 by 0.2 /
  \axis left ticks short numbered from 0 to 0.12 by 0.02 /
\put{\copy\figurelegend} [rt] at 0.9 0.13
\footnotesize
\put {$\kappa(x)$} [lt] at 0.02 0.13
\small
\linethickness=0.5pt
\setplotsymbol ({\sevenrm .})
\setquadratic
\setlinear
\setsolid
\Black{\relax  
\plot
         0         0
    0.0100         0
    0.0200         0
    0.0300         0
    0.0400         0
    0.0500         0
    0.0600         0
    0.0700         0
    0.0800         0
    0.0900         0
    0.1000         0
    0.1100    0.0003
    0.1200    0.0012
    0.1300    0.0027
    0.1400    0.0047
    0.1500    0.0073
    0.1600    0.0104
    0.1700    0.0140
    0.1800    0.0181
    0.1900    0.0227
    0.2000    0.0277
    0.2100    0.0331
    0.2200    0.0389
    0.2300    0.0450
    0.2400    0.0514
    0.2500    0.0580
    0.2600    0.0647
    0.2700    0.0716
    0.2800    0.0784
    0.2900    0.0851
    0.3000    0.0917
    0.3100    0.0979
    0.3200    0.1038
    0.3300    0.1092
    0.3400    0.1140
    0.3500    0.1182
    0.3600    0.1215
    0.3700    0.1239
    0.3800    0.1254
    0.3900    0.1259
    0.4000    0.1254
    0.4100    0.1237
    0.4200    0.1209
    0.4300    0.1171
    0.4400    0.1122
    0.4500    0.1064
    0.4600    0.0996
    0.4700    0.0922
    0.4800    0.0841
    0.4900    0.0755
    0.5000    0.0667
    0.5100    0.0578
    0.5200    0.0490
    0.5300    0.0405
    0.5400    0.0326
    0.5500    0.0253
    0.5600    0.0190
    0.5700    0.0138
    0.5800    0.0097
    0.5900    0.0068
    0.6000    0.0053
    0.6100    0.0051
    0.6200    0.0062
    0.6300    0.0084
    0.6400    0.0117
    0.6500    0.0159
    0.6600    0.0207
    0.6700    0.0260
    0.6800    0.0315
    0.6900    0.0369
    0.7000    0.0419
    0.7100    0.0464
    0.7200    0.0500
    0.7300    0.0526
    0.7400    0.0541
    0.7500    0.0544
    0.7600    0.0534
    0.7700    0.0512
    0.7800    0.0479
    0.7900    0.0437
    0.8000    0.0387
    0.8100    0.0332
    0.8200    0.0275
    0.8300    0.0218
    0.8400    0.0164
    0.8500    0.0116
    0.8600    0.0074
    0.8700    0.0041
    0.8800    0.0018
    0.8900    0.0004
    0.9000         0
    0.9100         0
    0.9200         0
    0.9300         0
    0.9400         0
    0.9500         0
    0.9600         0
    0.9700         0
    0.9800         0
    0.9900         0
    1.0000         0
 /\relax}\relax
\setdashes <3pt>
\Blue{\relax
\plot
  0.0000   0.0000
  0.0150   0.0001
  0.0300   0.0002
  0.0450   0.0004
  0.0600   0.0010
  0.0750   0.0016
  0.0900   0.0030
  0.1050   0.0046
  0.1200   0.0068
  0.1350   0.0098
  0.1500   0.0131
  0.1650   0.0178
  0.1800   0.0229
  0.1950   0.0290
  0.2100   0.0359
  0.2250   0.0433
  0.2400   0.0518
  0.2550   0.0604
  0.2700   0.0695
  0.2850   0.0787
  0.3000   0.0878
  0.3150   0.0961
  0.3300   0.1039
  0.3450   0.1105
  0.3600   0.1154
  0.3750   0.1195
  0.3900   0.1203
  0.4050   0.1199
  0.4200   0.1169
  0.4350   0.1115
  0.4500   0.1049
  0.4650   0.0950
  0.4800   0.0845
  0.4950   0.0725
  0.5100   0.0602
  0.5250   0.0477
  0.5400   0.0363
  0.5550   0.0260
  0.5700   0.0175
  0.5850   0.0119
  0.6000   0.0076
  0.6150   0.0079
  0.6300   0.0096
  0.6450   0.0141
  0.6600   0.0204
  0.6750   0.0275
  0.6900   0.0351
  0.7050   0.0422
  0.7200   0.0480
  0.7350   0.0514
  0.7500   0.0536
  0.7650   0.0514
  0.7800   0.0478
  0.7950   0.0417
  0.8100   0.0341
  0.8250   0.0259
  0.8400   0.0180
  0.8550   0.0108
  0.8700   0.0053
  0.8850   0.0018
  0.9000   0.0000
  0.9150   0.0000
  0.9300   0.0000
  0.9450   0.0000
  0.9600   0.0002
  0.9750   0.0003
  0.9900   0.0000
 /\relax}\relax
\setdots <2pt>
\Green{\relax
\plot
  0.0000   0.0000
  0.0150   0.0000
  0.0300   0.0001
  0.0450   0.0003
  0.0600   0.0007
  0.0750   0.0012
  0.0900   0.0022
  0.1050   0.0035
  0.1200   0.0054
  0.1350   0.0081
  0.1500   0.0112
  0.1650   0.0157
  0.1800   0.0207
  0.1950   0.0268
  0.2100   0.0340
  0.2250   0.0417
  0.2400   0.0507
  0.2550   0.0600
  0.2700   0.0698
  0.2850   0.0797
  0.3000   0.0896
  0.3150   0.0987
  0.3300   0.1072
  0.3450   0.1145
  0.3600   0.1198
  0.3750   0.1242
  0.3900   0.1250
  0.4050   0.1243
  0.4200   0.1209
  0.4350   0.1148
  0.4500   0.1074
  0.4650   0.0965
  0.4800   0.0848
  0.4950   0.0718
  0.5100   0.0584
  0.5250   0.0449
  0.5400   0.0330
  0.5550   0.0222
  0.5700   0.0136
  0.5850   0.0082
  0.6000   0.0043
  0.6150   0.0054
  0.6300   0.0079
  0.6450   0.0134
  0.6600   0.0206
  0.6750   0.0286
  0.6900   0.0368
  0.7050   0.0443
  0.7200   0.0503
  0.7350   0.0536
  0.7500   0.0555
  0.7650   0.0526
  0.7800   0.0484
  0.7950   0.0416
  0.8100   0.0335
  0.8250   0.0248
  0.8400   0.0168
  0.8550   0.0096
  0.8700   0.0043
  0.8850   0.0011
  0.9000   0.0000
  0.9150   0.0000
  0.9300   0.0001
  0.9450   0.0001
  0.9600   0.0003
  0.9750   0.0002
  0.9900   0.0000
 /\relax}\relax
\setdashes <4pt>
\setsolid
\Red{\relax
\plot
  0.0000   0.0000
  0.0150   0.0000
  0.0300   0.0001
  0.0450   0.0002
  0.0600   0.0004
  0.0750   0.0008
  0.0900   0.0016
  0.1050   0.0027
  0.1200   0.0044
  0.1350   0.0069
  0.1500   0.0098
  0.1650   0.0142
  0.1800   0.0192
  0.1950   0.0253
  0.2100   0.0326
  0.2250   0.0405
  0.2400   0.0499
  0.2550   0.0595
  0.2700   0.0697
  0.2850   0.0800
  0.3000   0.0903
  0.3150   0.0997
  0.3300   0.1084
  0.3450   0.1158
  0.3600   0.1211
  0.3750   0.1255
  0.3900   0.1260
  0.4050   0.1251
  0.4200   0.1214
  0.4350   0.1150
  0.4500   0.1072
  0.4650   0.0960
  0.4800   0.0841
  0.4950   0.0709
  0.5100   0.0574
  0.5250   0.0439
  0.5400   0.0321
  0.5550   0.0215
  0.5700   0.0131
  0.5850   0.0079
  0.6000   0.0043
  0.6150   0.0056
  0.6300   0.0084
  0.6450   0.0139
  0.6600   0.0211
  0.6750   0.0292
  0.6900   0.0373
  0.7050   0.0447
  0.7200   0.0505
  0.7350   0.0536
  0.7500   0.0554
  0.7650   0.0525
  0.7800   0.0482
  0.7950   0.0414
  0.8100   0.0333
  0.8250   0.0246
  0.8400   0.0166
  0.8550   0.0095
  0.8700   0.0043
  0.8850   0.0011
  0.9000   0.0000
  0.9150   0.0000
  0.9300   0.0001
  0.9450   0.0001
  0.9600   0.0002
  0.9750   0.0002
  0.9900   0.0001
 /\relax}\relax
\endpicture
}
%
\setbox\figuretwo=\vbox{\hsize=\xfiglen
\beginpicture
\eightrm
  \setcoordinatesystem units <\xfiglen,\yfiglen> 
  \setplotarea x from 0 to 1, y from 0 to 0.13
  \axis bottom shiftedto y=0 ticks short numbered from 0 to 1 by 0.2 /
  \axis left ticks short numbered from 0 to 0.12 by 0.02 /
\footnotesize
\put {$\kappa(x)$} [lt] at 0.02 0.13
\linethickness=0.5pt
\setsolid
\Black{\relax
\plot
         0         0
    0.0100         0
    0.0200         0
    0.0300         0
    0.0400         0
    0.0500         0
    0.0600         0
    0.0700         0
    0.0800         0
    0.0900         0
    0.1000         0
    0.1100    0.0003
    0.1200    0.0012
    0.1300    0.0027
    0.1400    0.0047
    0.1500    0.0073
    0.1600    0.0104
    0.1700    0.0140
    0.1800    0.0181
    0.1900    0.0227
    0.2000    0.0277
    0.2100    0.0331
    0.2200    0.0389
    0.2300    0.0450
    0.2400    0.0514
    0.2500    0.0580
    0.2600    0.0647
    0.2700    0.0716
    0.2800    0.0784
    0.2900    0.0851
    0.3000    0.0917
    0.3100    0.0979
    0.3200    0.1038
    0.3300    0.1092
    0.3400    0.1140
    0.3500    0.1182
    0.3600    0.1215
    0.3700    0.1239
    0.3800    0.1254
    0.3900    0.1259
    0.4000    0.1254
    0.4100    0.1237
    0.4200    0.1209
    0.4300    0.1171
    0.4400    0.1122
    0.4500    0.1064
    0.4600    0.0996
    0.4700    0.0922
    0.4800    0.0841
    0.4900    0.0755
    0.5000    0.0667
    0.5100    0.0578
    0.5200    0.0490
    0.5300    0.0405
    0.5400    0.0326
    0.5500    0.0253
    0.5600    0.0190
    0.5700    0.0138
    0.5800    0.0097
    0.5900    0.0068
    0.6000    0.0053
    0.6100    0.0051
    0.6200    0.0062
    0.6300    0.0084
    0.6400    0.0117
    0.6500    0.0159
    0.6600    0.0207
    0.6700    0.0260
    0.6800    0.0315
    0.6900    0.0369
    0.7000    0.0419
    0.7100    0.0464
    0.7200    0.0500
    0.7300    0.0526
    0.7400    0.0541
    0.7500    0.0544
    0.7600    0.0534
    0.7700    0.0512
    0.7800    0.0479
    0.7900    0.0437
    0.8000    0.0387
    0.8100    0.0332
    0.8200    0.0275
    0.8300    0.0218
    0.8400    0.0164
    0.8500    0.0116
    0.8600    0.0074
    0.8700    0.0041
    0.8800    0.0018
    0.8900    0.0004
    0.9000         0
    0.9100         0
    0.9200         0
    0.9300         0
    0.9400         0
    0.9500         0
    0.9600         0
    0.9700         0
    0.9800         0
    0.9900         0
    1.0000         0
 /\relax}\relax
\setdashes <3pt>
\Blue{\relax
\plot
  0.0000   0.0000
  0.0150   0.0000
  0.0300   0.0000
  0.0450   0.0000
  0.0600   0.0000
  0.0750   0.0000
  0.0900   0.0000
  0.1050   0.0000
  0.1200   0.0000
  0.1350   0.0010
  0.1500   0.0031
  0.1650   0.0071
  0.1800   0.0118
  0.1950   0.0182
  0.2100   0.0259
  0.2250   0.0344
  0.2400   0.0446
  0.2550   0.0550
  0.2700   0.0660
  0.2850   0.0768
  0.3000   0.0875
  0.3150   0.0968
  0.3300   0.1053
  0.3450   0.1122
  0.3600   0.1170
  0.3750   0.1208
  0.3900   0.1208
  0.4050   0.1196
  0.4200   0.1157
  0.4350   0.1095
  0.4500   0.1020
  0.4650   0.0916
  0.4800   0.0806
  0.4950   0.0684
  0.5100   0.0561
  0.5250   0.0436
  0.5400   0.0327
  0.5550   0.0229
  0.5700   0.0152
  0.5850   0.0103
  0.6000   0.0069
  0.6150   0.0081
  0.6300   0.0106
  0.6450   0.0158
  0.6600   0.0226
  0.6750   0.0302
  0.6900   0.0381
  0.7050   0.0454
  0.7200   0.0513
  0.7350   0.0546
  0.7500   0.0567
  0.7650   0.0542
  0.7800   0.0504
  0.7950   0.0439
  0.8100   0.0359
  0.8250   0.0273
  0.8400   0.0190
  0.8550   0.0115
  0.8700   0.0057
  0.8850   0.0022
  0.9000   0.0000
  0.9150   0.0000
  0.9300   0.0002
  0.9450   0.0009
  0.9600   0.0014
  0.9750   0.0018
  0.9900   0.0012
 /\relax}\relax
\setdots <2pt>
\Green{\relax
\plot
  0.0000   0.0000
  0.0150   0.0000
  0.0300   0.0000
  0.0450   0.0000
  0.0600   0.0000
  0.0750   0.0000
  0.0900   0.0000
  0.1050   0.0000
  0.1200   0.0000
  0.1350   0.0000
  0.1500   0.0000
  0.1650   0.0000
  0.1800   0.0023
  0.1950   0.0094
  0.2100   0.0186
  0.2250   0.0288
  0.2400   0.0412
  0.2550   0.0540
  0.2700   0.0673
  0.2850   0.0801
  0.3000   0.0927
  0.3150   0.1032
  0.3300   0.1126
  0.3450   0.1198
  0.3600   0.1244
  0.3750   0.1277
  0.3900   0.1267
  0.4050   0.1243
  0.4200   0.1191
  0.4350   0.1116
  0.4500   0.1028
  0.4650   0.0912
  0.4800   0.0791
  0.4950   0.0660
  0.5100   0.0530
  0.5250   0.0401
  0.5400   0.0290
  0.5550   0.0191
  0.5700   0.0116
  0.5850   0.0071
  0.6000   0.0042
  0.6150   0.0062
  0.6300   0.0095
  0.6450   0.0156
  0.6600   0.0231
  0.6750   0.0315
  0.6900   0.0399
  0.7050   0.0474
  0.7200   0.0534
  0.7350   0.0566
  0.7500   0.0584
  0.7650   0.0554
  0.7800   0.0510
  0.7950   0.0438
  0.8100   0.0353
  0.8250   0.0262
  0.8400   0.0177
  0.8550   0.0103
  0.8700   0.0048
  0.8850   0.0016
  0.9000   0.0000
  0.9150   0.0002
  0.9300   0.0005
  0.9450   0.0012
  0.9600   0.0015
  0.9750   0.0017
  0.9900   0.0012
 /\relax}\relax
\endpicture
}
%
\yfiglen=13 true in
\setbox\figurethree=\vbox{\hsize=\xfiglen
\beginpicture
\eightrm
  \setcoordinatesystem units <\xfiglen,\yfiglen> 
  \setplotarea x from 0 to 1, y from 0 to 0.16
  \axis bottom shiftedto y=0 ticks short numbered from 0 to 1 by 0.2 /
  \axis left ticks short numbered from 0 to 0.16 by 0.04 /
\put{\copy\figurelegend} [lt] at 0.25 0.15
\footnotesize
\put {$\kappa(x)$} [lt] at 0.02 0.16
\small
\linethickness=0.5pt
\setplotsymbol ({\sevenrm .})
\setquadratic
\setlinear
\setsolid
\Black{\relax  
\plot
         0         0
    0.0100         0
    0.0200         0
    0.0300         0
    0.0400         0
    0.0500         0
    0.0600    0.0015
    0.0700    0.0030
    0.0800    0.0045
    0.0900    0.0060
    0.1000    0.0075
    0.1100    0.0090
    0.1200    0.0105
    0.1300    0.0120
    0.1400    0.0135
    0.1500    0.0150
    0.1600    0.0165
    0.1700    0.0180
    0.1800    0.0195
    0.1900    0.0210
    0.2000    0.0225
    0.2100    0.0240
    0.2200    0.0255
    0.2300    0.0270
    0.2400    0.0285
    0.2500    0.0300
    0.2600    0.0340
    0.2700    0.0380
    0.2800    0.0420
    0.2900    0.0460
    0.3000    0.0500
    0.3100    0.0503
    0.3200    0.0505
    0.3300    0.0508
    0.3400    0.0510
    0.3500    0.0513
    0.3600    0.0515
    0.3700    0.0517
    0.3800    0.0520
    0.3900    0.0523
    0.4000    0.0525
    0.4100    0.0528
    0.4200    0.0530
    0.4300    0.0533
    0.4400    0.0535
    0.4500    0.0538
    0.4600    0.0540
    0.4700    0.0543
    0.4800    0.0545
    0.4900    0.0548
    0.5000    0.0550
    0.5100    0.0570
    0.5200    0.0590
    0.5300    0.0610
    0.5400    0.0630
    0.5500    0.0650
    0.5600    0.0670
    0.5700    0.0690
    0.5800    0.0710
    0.5900    0.0730
    0.6000    0.0750
    0.6100    0.0825
    0.6200    0.0900
    0.6300    0.0975
    0.6400    0.1050
    0.6500    0.1125
    0.6600    0.1200
    0.6700    0.1275
    0.6800    0.1350
    0.6900    0.1425
    0.7000    0.1500
    0.7100    0.1500
    0.7200    0.1500
    0.7300    0.1500
    0.7400    0.1500
    0.7500    0.1500
    0.7600    0.1500
    0.7700    0.1500
    0.7800    0.1500
    0.7900    0.1500
    0.8000    0.1500
    0.8100    0.1450
    0.8200    0.1400
    0.8300    0.1350
    0.8400    0.1300
    0.8500    0.1250
    0.8600    0.1200
    0.8700    0.1150
    0.8800    0.1100
    0.8900    0.1050
    0.9000    0.1000
    0.9100    0.0800
    0.9200    0.0600
    0.9300    0.0400
    0.9400    0.0200
    0.9500         0
    0.9600         0
    0.9700         0
    0.9800         0
    0.9900         0
    1.0000         0
 /\relax}\relax
\setdashes <3pt>
\Blue{\relax
\plot
  0.0000   0.0000
  0.0150   0.0000
  0.0300   0.0001
  0.0450   0.0003
  0.0600   0.0007
  0.0750   0.0012
  0.0900   0.0022
  0.1050   0.0033
  0.1200   0.0048
  0.1350   0.0067
  0.1500   0.0089
  0.1650   0.0116
  0.1800   0.0146
  0.1950   0.0179
  0.2100   0.0215
  0.2250   0.0252
  0.2400   0.0290
  0.2550   0.0328
  0.2700   0.0366
  0.2850   0.0401
  0.3000   0.0434
  0.3150   0.0462
  0.3300   0.0487
  0.3450   0.0508
  0.3600   0.0523
  0.3750   0.0536
  0.3900   0.0542
  0.4050   0.0546
  0.4200   0.0546
  0.4350   0.0545
  0.4500   0.0543
  0.4650   0.0543
  0.4800   0.0544
  0.4950   0.0549
  0.5100   0.0561
  0.5250   0.0576
  0.5400   0.0607
  0.5550   0.0643
  0.5700   0.0692
  0.5850   0.0754
  0.6000   0.0823
  0.6150   0.0909
  0.6300   0.0999
  0.6450   0.1096
  0.6600   0.1191
  0.6750   0.1286
  0.6900   0.1366
  0.7050   0.1437
  0.7200   0.1489
  0.7350   0.1520
  0.7500   0.1540
  0.7650   0.1529
  0.7800   0.1511
  0.7950   0.1478
  0.8100   0.1437
  0.8250   0.1391
  0.8400   0.1330
  0.8550   0.1259
  0.8700   0.1166
  0.8850   0.1031
  0.9000   0.0876
  0.9150   0.0654
  0.9300   0.0432
  0.9450   0.0208
  0.9600   0.0064
  0.9750   0.0000
  0.9900   0.0000
 /\relax}\relax
\setdots <2pt>
\Green{\relax
\plot
  0.0000   0.0000
  0.0150   0.0000
  0.0300   0.0001
  0.0450   0.0003
  0.0600   0.0007
  0.0750   0.0011
  0.0900   0.0020
  0.1050   0.0031
  0.1200   0.0047
  0.1350   0.0066
  0.1500   0.0088
  0.1650   0.0118
  0.1800   0.0149
  0.1950   0.0185
  0.2100   0.0224
  0.2250   0.0264
  0.2400   0.0305
  0.2550   0.0346
  0.2700   0.0386
  0.2850   0.0422
  0.3000   0.0456
  0.3150   0.0483
  0.3300   0.0506
  0.3450   0.0524
  0.3600   0.0536
  0.3750   0.0545
  0.3900   0.0547
  0.4050   0.0547
  0.4200   0.0545
  0.4350   0.0541
  0.4500   0.0537
  0.4650   0.0536
  0.4800   0.0536
  0.4950   0.0541
  0.5100   0.0552
  0.5250   0.0567
  0.5400   0.0597
  0.5550   0.0632
  0.5700   0.0680
  0.5850   0.0742
  0.6000   0.0810
  0.6150   0.0898
  0.6300   0.0990
  0.6450   0.1090
  0.6600   0.1191
  0.6750   0.1291
  0.6900   0.1376
  0.7050   0.1451
  0.7200   0.1506
  0.7350   0.1535
  0.7500   0.1551
  0.7650   0.1532
  0.7800   0.1506
  0.7950   0.1466
  0.8100   0.1422
  0.8250   0.1375
  0.8400   0.1323
  0.8550   0.1262
  0.8700   0.1183
  0.8850   0.1056
  0.9000   0.0905
  0.9150   0.0664
  0.9300   0.0420
  0.9450   0.0170
  0.9600   0.0024
  0.9750   0.0000
  0.9900   0.0000
 /\relax}\relax
\setdashes <4pt>
\setsolid
\Red{\relax
\plot
  0.0000   0.0000
  0.0150   0.0000
  0.0300   0.0001
  0.0450   0.0002
  0.0600   0.0006
  0.0750   0.0010
  0.0900   0.0018
  0.1050   0.0029
  0.1200   0.0044
  0.1350   0.0063
  0.1500   0.0086
  0.1650   0.0116
  0.1800   0.0148
  0.1950   0.0186
  0.2100   0.0226
  0.2250   0.0269
  0.2400   0.0312
  0.2550   0.0354
  0.2700   0.0395
  0.2850   0.0431
  0.3000   0.0465
  0.3150   0.0490
  0.3300   0.0512
  0.3450   0.0527
  0.3600   0.0536
  0.3750   0.0542
  0.3900   0.0541
  0.4050   0.0540
  0.4200   0.0536
  0.4350   0.0532
  0.4500   0.0528
  0.4650   0.0528
  0.4800   0.0531
  0.4950   0.0538
  0.5100   0.0551
  0.5250   0.0568
  0.5400   0.0599
  0.5550   0.0635
  0.5700   0.0683
  0.5850   0.0743
  0.6000   0.0811
  0.6150   0.0898
  0.6300   0.0990
  0.6450   0.1090
  0.6600   0.1192
  0.6750   0.1294
  0.6900   0.1381
  0.7050   0.1457
  0.7200   0.1512
  0.7350   0.1540
  0.7500   0.1554
  0.7650   0.1532
  0.7800   0.1503
  0.7950   0.1459
  0.8100   0.1413
  0.8250   0.1366
  0.8400   0.1318
  0.8550   0.1262
  0.8700   0.1190
  0.8850   0.1068
  0.9000   0.0920
  0.9150   0.0670
  0.9300   0.0415
  0.9450   0.0151
  0.9600   0.0004
  0.9750   0.0000
  0.9900   0.0000
 /\relax}\relax
\endpicture
}
%
\setbox\figurefour=\vbox{\hsize=\xfiglen
\beginpicture
\eightrm
  \setcoordinatesystem units <\xfiglen,\yfiglen> 
  \setplotarea x from 0 to 1, y from 0 to 0.16
  \axis bottom shiftedto y=0 ticks short numbered from 0 to 1 by 0.2 /
  \axis left ticks short numbered from 0 to 0.16 by 0.04 /
%
\footnotesize
\put {$\kappa(x)$} [lt] at 0.02 0.16
\small
\linethickness=1pt
\setplotsymbol ({\sevenrm .})
\setquadratic
\setlinear
\setsolid
\Black{\relax  
\plot
         0         0
    0.0100         0
    0.0200         0
    0.0300         0
    0.0400         0
    0.0500         0
    0.0600    0.0015
    0.0700    0.0030
    0.0800    0.0045
    0.0900    0.0060
    0.1000    0.0075
    0.1100    0.0090
    0.1200    0.0105
    0.1300    0.0120
    0.1400    0.0135
    0.1500    0.0150
    0.1600    0.0165
    0.1700    0.0180
    0.1800    0.0195
    0.1900    0.0210
    0.2000    0.0225
    0.2100    0.0240
    0.2200    0.0255
    0.2300    0.0270
    0.2400    0.0285
    0.2500    0.0300
    0.2600    0.0340
    0.2700    0.0380
    0.2800    0.0420
    0.2900    0.0460
    0.3000    0.0500
    0.3100    0.0503
    0.3200    0.0505
    0.3300    0.0508
    0.3400    0.0510
    0.3500    0.0513
    0.3600    0.0515
    0.3700    0.0517
    0.3800    0.0520
    0.3900    0.0523
    0.4000    0.0525
    0.4100    0.0528
    0.4200    0.0530
    0.4300    0.0533
    0.4400    0.0535
    0.4500    0.0538
    0.4600    0.0540
    0.4700    0.0543
    0.4800    0.0545
    0.4900    0.0548
    0.5000    0.0550
    0.5100    0.0570
    0.5200    0.0590
    0.5300    0.0610
    0.5400    0.0630
    0.5500    0.0650
    0.5600    0.0670
    0.5700    0.0690
    0.5800    0.0710
    0.5900    0.0730
    0.6000    0.0750
    0.6100    0.0825
    0.6200    0.0900
    0.6300    0.0975
    0.6400    0.1050
    0.6500    0.1125
    0.6600    0.1200
    0.6700    0.1275
    0.6800    0.1350
    0.6900    0.1425
    0.7000    0.1500
    0.7100    0.1500
    0.7200    0.1500
    0.7300    0.1500
    0.7400    0.1500
    0.7500    0.1500
    0.7600    0.1500
    0.7700    0.1500
    0.7800    0.1500
    0.7900    0.1500
    0.8000    0.1500
    0.8100    0.1450
    0.8200    0.1400
    0.8300    0.1350
    0.8400    0.1300
    0.8500    0.1250
    0.8600    0.1200
    0.8700    0.1150
    0.8800    0.1100
    0.8900    0.1050
    0.9000    0.1000
    0.9100    0.0800
    0.9200    0.0600
    0.9300    0.0400
    0.9400    0.0200
    0.9500         0
    0.9600         0
    0.9700         0
    0.9800         0
    0.9900         0
    1.0000         0
 /\relax}\relax
\setdashes <3pt>
\Blue{\relax
\plot
  0.0000   0.0000
  0.0150   0.0000
  0.0300   0.0000
  0.0450   0.0000
  0.0600   0.0000
  0.0750   0.0000
  0.0900   0.0000
  0.1050   0.0000
  0.1200   0.0000
  0.1350   0.0000
  0.1500   0.0000
  0.1650   0.0012
  0.1800   0.0038
  0.1950   0.0074
  0.2100   0.0119
  0.2250   0.0167
  0.2400   0.0223
  0.2550   0.0279
  0.2700   0.0335
  0.2850   0.0387
  0.3000   0.0437
  0.3150   0.0474
  0.3300   0.0506
  0.3450   0.0529
  0.3600   0.0542
  0.3750   0.0551
  0.3900   0.0548
  0.4050   0.0543
  0.4200   0.0533
  0.4350   0.0523
  0.4500   0.0512
  0.4650   0.0505
  0.4800   0.0501
  0.4950   0.0504
  0.5100   0.0516
  0.5250   0.0532
  0.5400   0.0567
  0.5550   0.0610
  0.5700   0.0666
  0.5850   0.0737
  0.6000   0.0815
  0.6150   0.0910
  0.6300   0.1009
  0.6450   0.1113
  0.6600   0.1214
  0.6750   0.1315
  0.6900   0.1397
  0.7050   0.1470
  0.7200   0.1523
  0.7350   0.1553
  0.7500   0.1571
  0.7650   0.1558
  0.7800   0.1537
  0.7950   0.1500
  0.8100   0.1455
  0.8250   0.1405
  0.8400   0.1341
  0.8550   0.1266
  0.8700   0.1171
  0.8850   0.1035
  0.9000   0.0880
  0.9150   0.0659
  0.9300   0.0438
  0.9450   0.0217
  0.9600   0.0076
  0.9750   0.0000
  0.9900   0.0000
 /\relax}\relax
\setdots <2pt>
\Green{\relax
\plot
  0.0000   0.0000
  0.0150   0.0000
  0.0300   0.0000
  0.0450   0.0000
  0.0600   0.0000
  0.0750   0.0000
  0.0900   0.0000
  0.1050   0.0000
  0.1200   0.0000
  0.1350   0.0000
  0.1500   0.0000
  0.1650   0.0000
  0.1800   0.0000
  0.1950   0.0014
  0.2100   0.0073
  0.2250   0.0140
  0.2400   0.0217
  0.2550   0.0294
  0.2700   0.0369
  0.2850   0.0436
  0.3000   0.0498
  0.3150   0.0538
  0.3300   0.0570
  0.3450   0.0586
  0.3600   0.0588
  0.3750   0.0583
  0.3900   0.0564
  0.4050   0.0544
  0.4200   0.0521
  0.4350   0.0500
  0.4500   0.0481
  0.4650   0.0472
  0.4800   0.0468
  0.4950   0.0474
  0.5100   0.0491
  0.5250   0.0514
  0.5400   0.0556
  0.5550   0.0604
  0.5700   0.0665
  0.5850   0.0739
  0.6000   0.0820
  0.6150   0.0917
  0.6300   0.1016
  0.6450   0.1121
  0.6600   0.1224
  0.6750   0.1325
  0.6900   0.1409
  0.7050   0.1482
  0.7200   0.1534
  0.7350   0.1560
  0.7500   0.1574
  0.7650   0.1554
  0.7800   0.1526
  0.7950   0.1483
  0.8100   0.1436
  0.8250   0.1386
  0.8400   0.1331
  0.8550   0.1268
  0.8700   0.1186
  0.8850   0.1059
  0.9000   0.0908
  0.9150   0.0668
  0.9300   0.0426
  0.9450   0.0179
  0.9600   0.0040
  0.9750   0.0000
  0.9900   0.0002
 /\relax}\relax
\endpicture
}
%
\setbox\figurefive=\vbox{\hsize=\xfiglen
\beginpicture
\eightrm
  \setcoordinatesystem units <\xfiglen,0.95\yfiglen> 
  \setplotarea x from 0 to 1, y from 0 to 0.16
  \axis bottom shiftedto y=0 ticks short numbered from 0 to 1 by 0.2 /
  \axis left ticks short numbered from 0 to 0.16 by 0.04 /
\footnotesize
\put {$\kappa(x)$} [lt] at 0.02 0.16
\small
\put{\box\figurelegendthree} [lt] at 0.1 0.14
\linethickness=1.5pt
\setquadratic
\setlinear
\setsolid
\Black{\relax  
\plot
         0         0
    0.0100         0
    0.0200         0
    0.0300         0
    0.0400         0
    0.0500         0
    0.0600    0.0015
    0.0700    0.0030
    0.0800    0.0045
    0.0900    0.0060
    0.1000    0.0075
    0.1100    0.0090
    0.1200    0.0105
    0.1300    0.0120
    0.1400    0.0135
    0.1500    0.0150
    0.1600    0.0165
    0.1700    0.0180
    0.1800    0.0195
    0.1900    0.0210
    0.2000    0.0225
    0.2100    0.0240
    0.2200    0.0255
    0.2300    0.0270
    0.2400    0.0285
    0.2500    0.0300
    0.2600    0.0340
    0.2700    0.0380
    0.2800    0.0420
    0.2900    0.0460
    0.3000    0.0500
    0.3100    0.0503
    0.3200    0.0505
    0.3300    0.0508
    0.3400    0.0510
    0.3500    0.0513
    0.3600    0.0515
    0.3700    0.0517
    0.3800    0.0520
    0.3900    0.0523
    0.4000    0.0525
    0.4100    0.0528
    0.4200    0.0530
    0.4300    0.0533
    0.4400    0.0535
    0.4500    0.0538
    0.4600    0.0540
    0.4700    0.0543
    0.4800    0.0545
    0.4900    0.0548
    0.5000    0.0550
    0.5100    0.0570
    0.5200    0.0590
    0.5300    0.0610
    0.5400    0.0630
    0.5500    0.0650
    0.5600    0.0670
    0.5700    0.0690
    0.5800    0.0710
    0.5900    0.0730
    0.6000    0.0750
    0.6100    0.0825
    0.6200    0.0900
    0.6300    0.0975
    0.6400    0.1050
    0.6500    0.1125
    0.6600    0.1200
    0.6700    0.1275
    0.6800    0.1350
    0.6900    0.1425
    0.7000    0.1500
    0.7100    0.1500
    0.7200    0.1500
    0.7300    0.1500
    0.7400    0.1500
    0.7500    0.1500
    0.7600    0.1500
    0.7700    0.1500
    0.7800    0.1500
    0.7900    0.1500
    0.8000    0.1500
    0.8100    0.1450
    0.8200    0.1400
    0.8300    0.1350
    0.8400    0.1300
    0.8500    0.1250
    0.8600    0.1200
    0.8700    0.1150
    0.8800    0.1100
    0.8900    0.1050
    0.9000    0.1000
    0.9100    0.0800
    0.9200    0.0600
    0.9300    0.0400
    0.9400    0.0200
    0.9500         0
    0.9600         0
    0.9700         0
    0.9800         0
    0.9900         0
    1.0000         0
 /\relax}\relax
%
\linethickness=0.4pt
\setlinear
\setdots <2pt>
\Blue{\relax  
\plot  
  0.0000   0.0000
  0.0150   0.0000
  0.0300   0.0000
  0.0450   0.0001
  0.0600   0.0002
  0.0750   0.0003
  0.0900   0.0005
  0.1050   0.0007
  0.1200   0.0010
  0.1350   0.0015
  0.1500   0.0019
  0.1650   0.0026
  0.1800   0.0033
  0.1950   0.0041
  0.2100   0.0050
  0.2250   0.0060
  0.2400   0.0073
  0.2550   0.0086
  0.2700   0.0100
  0.2850   0.0116
  0.3000   0.0133
  0.3150   0.0152
  0.3300   0.0171
  0.3450   0.0192
  0.3600   0.0215
  0.3750   0.0238
  0.3900   0.0264
  0.4050   0.0290
  0.4200   0.0318
  0.4350   0.0347
  0.4500   0.0376
  0.4650   0.0408
  0.4800   0.0440
  0.4950   0.0473
  0.5100   0.0507
  0.5250   0.0542
  0.5400   0.0578
  0.5550   0.0614
  0.5700   0.0651
  0.5850   0.0689
  0.6000   0.0727
  0.6150   0.0765
  0.6300   0.0804
  0.6450   0.0842
  0.6600   0.0881
  0.6750   0.0920
  0.6900   0.0958
  0.7050   0.0996
  0.7200   0.1033
  0.7350   0.1070
  0.7500   0.1107
  0.7650   0.1142
  0.7800   0.1177
  0.7950   0.1210
  0.8100   0.1242
  0.8250   0.1274
  0.8400   0.1304
  0.8550   0.1332
  0.8700   0.1359
  0.8850   0.1385
  0.9000   0.1410
  0.9150   0.1432
  0.9300   0.1453
  0.9450   0.1473
  0.9600   0.1504
  0.9750   0.1539
  0.9900   0.1056
 /\relax}\relax
\setsolid
\LimeGreen{\relax  
\plot  
  0.0000   0.0000
  0.0150   0.0000
  0.0300   0.0000
  0.0450   0.0001
  0.0600   0.0002
  0.0750   0.0003
  0.0900   0.0006
  0.1050   0.0009
  0.1200   0.0012
  0.1350   0.0017
  0.1500   0.0023
  0.1650   0.0030
  0.1800   0.0039
  0.1950   0.0048
  0.2100   0.0059
  0.2250   0.0071
  0.2400   0.0085
  0.2550   0.0100
  0.2700   0.0117
  0.2850   0.0135
  0.3000   0.0154
  0.3150   0.0175
  0.3300   0.0197
  0.3450   0.0220
  0.3600   0.0245
  0.3750   0.0271
  0.3900   0.0298
  0.4050   0.0326
  0.4200   0.0355
  0.4350   0.0386
  0.4500   0.0416
  0.4650   0.0449
  0.4800   0.0481
  0.4950   0.0514
  0.5100   0.0548
  0.5250   0.0582
  0.5400   0.0616
  0.5550   0.0651
  0.5700   0.0685
  0.5850   0.0720
  0.6000   0.0754
  0.6150   0.0787
  0.6300   0.0820
  0.6450   0.0852
  0.6600   0.0883
  0.6750   0.0913
  0.6900   0.0941
  0.7050   0.0967
  0.7200   0.0992
  0.7350   0.1013
  0.7500   0.1033
  0.7650   0.1049
  0.7800   0.1062
  0.7950   0.1073
  0.8100   0.1080
  0.8250   0.1085
  0.8400   0.1086
  0.8550   0.1085
  0.8700   0.1081
  0.8850   0.1075
  0.9000   0.1068
  0.9150   0.1060
  0.9300   0.1052
  0.9450   0.1044
  0.9600   0.1047
  0.9750   0.1056
  0.9900   0.0720
 /\relax}\relax
%
\Orange{\relax
\plot  
  0.0000   0.0000
  0.0150   0.0000
  0.0300   0.0001
  0.0450   0.0001
  0.0600   0.0003
  0.0750   0.0005
  0.0900   0.0010
  0.1050   0.0015
  0.1200   0.0021
  0.1350   0.0030
  0.1500   0.0039
  0.1650   0.0052
  0.1800   0.0065
  0.1950   0.0081
  0.2100   0.0099
  0.2250   0.0118
  0.2400   0.0141
  0.2550   0.0164
  0.2700   0.0190
  0.2850   0.0217
  0.3000   0.0246
  0.3150   0.0277
  0.3300   0.0309
  0.3450   0.0342
  0.3600   0.0376
  0.3750   0.0411
  0.3900   0.0447
  0.4050   0.0484
  0.4200   0.0521
  0.4350   0.0559
  0.4500   0.0596
  0.4650   0.0635
  0.4800   0.0674
  0.4950   0.0713
  0.5100   0.0753
  0.5250   0.0793
  0.5400   0.0835
  0.5550   0.0877
  0.5700   0.0920
  0.5850   0.0964
  0.6000   0.1009
  0.6150   0.1054
  0.6300   0.1098
  0.6450   0.1143
  0.6600   0.1185
  0.6750   0.1227
  0.6900   0.1262
  0.7050   0.1294
  0.7200   0.1320
  0.7350   0.1337
  0.7500   0.1349
  0.7650   0.1344
  0.7800   0.1332
  0.7950   0.1307
  0.8100   0.1267
  0.8250   0.1219
  0.8400   0.1149
  0.8550   0.1073
  0.8700   0.0984
  0.8850   0.0884
  0.9000   0.0780
  0.9150   0.0670
  0.9300   0.0563
  0.9450   0.0461
  0.9600   0.0376
  0.9750   0.0300
  0.9900   0.0189
 /\relax}\relax
%
\setdashes <3pt>
\Brown{\relax
\plot  
  0.0000   0.0000
  0.0150   0.0000
  0.0300   0.0001
  0.0450   0.0002
  0.0600   0.0004
  0.0750   0.0006
  0.0900   0.0011
  0.1050   0.0016
  0.1200   0.0023
  0.1350   0.0033
  0.1500   0.0043
  0.1650   0.0057
  0.1800   0.0071
  0.1950   0.0089
  0.2100   0.0108
  0.2250   0.0128
  0.2400   0.0152
  0.2550   0.0177
  0.2700   0.0203
  0.2850   0.0231
  0.3000   0.0260
  0.3150   0.0291
  0.3300   0.0322
  0.3450   0.0354
  0.3600   0.0386
  0.3750   0.0418
  0.3900   0.0451
  0.4050   0.0483
  0.4200   0.0515
  0.4350   0.0547
  0.4500   0.0579
  0.4650   0.0611
  0.4800   0.0643
  0.4950   0.0676
  0.5100   0.0711
  0.5250   0.0746
  0.5400   0.0784
  0.5550   0.0824
  0.5700   0.0867
  0.5850   0.0913
  0.6000   0.0960
  0.6150   0.1013
  0.6300   0.1066
  0.6450   0.1122
  0.6600   0.1178
  0.6750   0.1234
  0.6900   0.1287
  0.7050   0.1336
  0.7200   0.1378
  0.7350   0.1411
  0.7500   0.1438
  0.7650   0.1443
  0.7800   0.1440
  0.7950   0.1419
  0.8100   0.1377
  0.8250   0.1326
  0.8400   0.1244
  0.8550   0.1153
  0.8700   0.1042
  0.8850   0.0915
  0.9000   0.0781
  0.9150   0.0636
  0.9300   0.0493
  0.9450   0.0354
  0.9600   0.0237
  0.9750   0.0131
  0.9900   0.0068
 /\relax}\relax
%
\setsolid
\BrickRed{\relax
\plot  
  0.0000   0.0000
  0.0150   0.0000
  0.0300   0.0001
  0.0450   0.0002
  0.0600   0.0005
  0.0750   0.0008
  0.0900   0.0014
  0.1050   0.0021
  0.1200   0.0031
  0.1350   0.0043
  0.1500   0.0056
  0.1650   0.0073
  0.1800   0.0092
  0.1950   0.0113
  0.2100   0.0137
  0.2250   0.0162
  0.2400   0.0190
  0.2550   0.0219
  0.2700   0.0250
  0.2850   0.0282
  0.3000   0.0313
  0.3150   0.0345
  0.3300   0.0377
  0.3450   0.0408
  0.3600   0.0438
  0.3750   0.0466
  0.3900   0.0492
  0.4050   0.0517
  0.4200   0.0540
  0.4350   0.0561
  0.4500   0.0581
  0.4650   0.0599
  0.4800   0.0618
  0.4950   0.0637
  0.5100   0.0659
  0.5250   0.0681
  0.5400   0.0709
  0.5550   0.0741
  0.5700   0.0778
  0.5850   0.0823
  0.6000   0.0872
  0.6150   0.0932
  0.6300   0.0995
  0.6450   0.1064
  0.6600   0.1137
  0.6750   0.1212
  0.6900   0.1285
  0.7050   0.1355
  0.7200   0.1419
  0.7350   0.1470
  0.7500   0.1514
  0.7650   0.1532
  0.7800   0.1539
  0.7950   0.1524
  0.8100   0.1484
  0.8250   0.1433
  0.8400   0.1342
  0.8550   0.1240
  0.8700   0.1112
  0.8850   0.0963
  0.9000   0.0803
  0.9150   0.0623
  0.9300   0.0445
  0.9450   0.0270
  0.9600   0.0122
  0.9750  -0.0014
  0.9900  -0.0036
 /\relax}\relax
\endpicture
}
\setbox\figuresix=\vbox{\hsize=\xfiglen
\beginpicture
\eightrm
  \setcoordinatesystem units <\xfiglen, 0.95\yfiglen> 
  \setplotarea x from 0 to 1, y from 0 to 0.16
  \axis bottom shiftedto y=0 ticks short numbered from 0 to 1 by 0.2 /
  \axis left ticks short numbered from 0 to 0.16 by 0.04 /
\footnotesize
\put {$\kappa(x)$} [lt] at 0.02 0.16
\small
\put{\box\figurelegendfour} [lt] at 0.1 0.14
\linethickness=1.5pt
\setquadratic
\setlinear
\setsolid
\Black{\relax  
\plot
         0         0
    0.0100         0
    0.0200         0
    0.0300         0
    0.0400         0
    0.0500         0
    0.0600    0.0015
    0.0700    0.0030
    0.0800    0.0045
    0.0900    0.0060
    0.1000    0.0075
    0.1100    0.0090
    0.1200    0.0105
    0.1300    0.0120
    0.1400    0.0135
    0.1500    0.0150
    0.1600    0.0165
    0.1700    0.0180
    0.1800    0.0195
    0.1900    0.0210
    0.2000    0.0225
    0.2100    0.0240
    0.2200    0.0255
    0.2300    0.0270
    0.2400    0.0285
    0.2500    0.0300
    0.2600    0.0340
    0.2700    0.0380
    0.2800    0.0420
    0.2900    0.0460
    0.3000    0.0500
    0.3100    0.0503
    0.3200    0.0505
    0.3300    0.0508
    0.3400    0.0510
    0.3500    0.0513
    0.3600    0.0515
    0.3700    0.0517
    0.3800    0.0520
    0.3900    0.0523
    0.4000    0.0525
    0.4100    0.0528
    0.4200    0.0530
    0.4300    0.0533
    0.4400    0.0535
    0.4500    0.0538
    0.4600    0.0540
    0.4700    0.0543
    0.4800    0.0545
    0.4900    0.0548
    0.5000    0.0550
    0.5100    0.0570
    0.5200    0.0590
    0.5300    0.0610
    0.5400    0.0630
    0.5500    0.0650
    0.5600    0.0670
    0.5700    0.0690
    0.5800    0.0710
    0.5900    0.0730
    0.6000    0.0750
    0.6100    0.0825
    0.6200    0.0900
    0.6300    0.0975
    0.6400    0.1050
    0.6500    0.1125
    0.6600    0.1200
    0.6700    0.1275
    0.6800    0.1350
    0.6900    0.1425
    0.7000    0.1500
    0.7100    0.1500
    0.7200    0.1500
    0.7300    0.1500
    0.7400    0.1500
    0.7500    0.1500
    0.7600    0.1500
    0.7700    0.1500
    0.7800    0.1500
    0.7900    0.1500
    0.8000    0.1500
    0.8100    0.1450
    0.8200    0.1400
    0.8300    0.1350
    0.8400    0.1300
    0.8500    0.1250
    0.8600    0.1200
    0.8700    0.1150
    0.8800    0.1100
    0.8900    0.1050
    0.9000    0.1000
    0.9100    0.0800
    0.9200    0.0600
    0.9300    0.0400
    0.9400    0.0200
    0.9500         0
    0.9600         0
    0.9700         0
    0.9800         0
    0.9900         0
    1.0000         0
 /\relax}\relax
\linethickness=0.6pt
\setsolid
\Blue{\relax
\plot  
  0.0000   0.0000
  0.0150   0.0000
  0.0300   0.0001
  0.0450   0.0003
  0.0600   0.0006
  0.0750   0.0010
  0.0900   0.0018
  0.1050   0.0027
  0.1200   0.0038
  0.1350   0.0053
  0.1500   0.0069
  0.1650   0.0091
  0.1800   0.0113
  0.1950   0.0139
  0.2100   0.0167
  0.2250   0.0197
  0.2400   0.0230
  0.2550   0.0264
  0.2700   0.0298
  0.2850   0.0333
  0.3000   0.0368
  0.3150   0.0401
  0.3300   0.0433
  0.3450   0.0463
  0.3600   0.0489
  0.3750   0.0515
  0.3900   0.0534
  0.4050   0.0552
  0.4200   0.0566
  0.4350   0.0578
  0.4500   0.0587
  0.4650   0.0595
  0.4800   0.0603
  0.4950   0.0611
  0.5100   0.0623
  0.5250   0.0636
  0.5400   0.0659
  0.5550   0.0687
  0.5700   0.0723
  0.5850   0.0770
  0.6000   0.0823
  0.6150   0.0891
  0.6300   0.0963
  0.6450   0.1044
  0.6600   0.1128
  0.6750   0.1215
  0.6900   0.1299
  0.7050   0.1377
  0.7200   0.1447
  0.7350   0.1501
  0.7500   0.1547
  0.7650   0.1562
  0.7800   0.1566
  0.7950   0.1546
  0.8100   0.1503
  0.8250   0.1447
  0.8400   0.1356
  0.8550   0.1253
  0.8700   0.1126
  0.8850   0.0975
  0.9000   0.0812
  0.9150   0.0624
  0.9300   0.0437
  0.9450   0.0251
  0.9600   0.0095
  0.9750  -0.0046
  0.9900  -0.0057
 /\relax}\relax
%
\Red{\relax
\plot  
  0.0000   0.0000
  0.0150   0.0000
  0.0300   0.0001
  0.0450   0.0003
  0.0600   0.0007
  0.0750   0.0012
  0.0900   0.0021
  0.1050   0.0032
  0.1200   0.0045
  0.1350   0.0063
  0.1500   0.0082
  0.1650   0.0107
  0.1800   0.0133
  0.1950   0.0163
  0.2100   0.0195
  0.2250   0.0229
  0.2400   0.0265
  0.2550   0.0302
  0.2700   0.0339
  0.2850   0.0375
  0.3000   0.0411
  0.3150   0.0444
  0.3300   0.0474
  0.3450   0.0501
  0.3600   0.0524
  0.3750   0.0544
  0.3900   0.0556
  0.4050   0.0566
  0.4200   0.0572
  0.4350   0.0574
  0.4500   0.0574
  0.4650   0.0573
  0.4800   0.0573
  0.4950   0.0575
  0.5100   0.0582
  0.5250   0.0592
  0.5400   0.0616
  0.5550   0.0646
  0.5700   0.0688
  0.5850   0.0743
  0.6000   0.0805
  0.6150   0.0885
  0.6300   0.0969
  0.6450   0.1061
  0.6600   0.1155
  0.6750   0.1250
  0.6900   0.1336
  0.7050   0.1415
  0.7200   0.1480
  0.7350   0.1525
  0.7500   0.1561
  0.7650   0.1563
  0.7800   0.1555
  0.7950   0.1526
  0.8100   0.1478
  0.8250   0.1421
  0.8400   0.1336
  0.8550   0.1242
  0.8700   0.1126
  0.8850   0.0983
  0.9000   0.0827
  0.9150   0.0636
  0.9300   0.0443
  0.9450   0.0249
  0.9600   0.0090
  0.9750  -0.0050
  0.9900  -0.0056
 /\relax}\relax
%
\Goldenrod{\relax
\plot  
  0.0000   0.0000
  0.0150   0.0000
  0.0300   0.0002
  0.0450   0.0004
  0.0600   0.0008
  0.0750   0.0014
  0.0900   0.0025
  0.1050   0.0037
  0.1200   0.0053
  0.1350   0.0073
  0.1500   0.0096
  0.1650   0.0124
  0.1800   0.0153
  0.1950   0.0187
  0.2100   0.0222
  0.2250   0.0259
  0.2400   0.0298
  0.2550   0.0336
  0.2700   0.0374
  0.2850   0.0410
  0.3000   0.0445
  0.3150   0.0475
  0.3300   0.0502
  0.3450   0.0524
  0.3600   0.0540
  0.3750   0.0553
  0.3900   0.0558
  0.4050   0.0560
  0.4200   0.0558
  0.4350   0.0554
  0.4500   0.0548
  0.4650   0.0542
  0.4800   0.0539
  0.4950   0.0540
  0.5100   0.0549
  0.5250   0.0561
  0.5400   0.0591
  0.5550   0.0628
  0.5700   0.0679
  0.5850   0.0743
  0.6000   0.0814
  0.6150   0.0904
  0.6300   0.0996
  0.6450   0.1094
  0.6600   0.1192
  0.6750   0.1288
  0.6900   0.1369
  0.7050   0.1440
  0.7200   0.1494
  0.7350   0.1525
  0.7500   0.1545
  0.7650   0.1535
  0.7800   0.1516
  0.7950   0.1483
  0.8100   0.1440
  0.8250   0.1392
  0.8400   0.1327
  0.8550   0.1252
  0.8700   0.1156
  0.8850   0.1021
  0.9000   0.0868
  0.9150   0.0656
  0.9300   0.0443
  0.9450   0.0225
  0.9600   0.0068
  0.9750  -0.0060
  0.9900  -0.0049
 /\relax}\relax
\endpicture
}
\setbox\figureseven=\vbox{\hsize=\xfiglen
\beginpicture
\eightrm
  \setcoordinatesystem units <\xfiglen,0.8\yfiglen> 
  \setplotarea x from 0 to 1, y from 0 to 0.21
  \axis bottom shiftedto y=0 ticks short numbered from 0 to 1 by 0.2 /
  \axis left ticks short numbered from 0 to 0.2 by 0.1 /
\footnotesize
\put {$\kappa(x)$} [lt] at 0.02 0.21
\small
\linethickness=1.5pt
\putrule from 0 0 to 0.1 0
\putrule from 0.1 0.01 to 0.2 0.01
\putrule from 0.20 0.03 to 0.30 0.03
\putrule from 0.30 0.14 to 0.45 0.14
\putrule from 0.45 0.11 to 0.60 0.11
\putrule from 0.60 0.19 to 0.80 0.19
\putrule from 0.80 0.11 to 0.90 0.11
\putrule from 0.90 0.08 to 1.00 0.08
\linethickness=0.6pt
\setsolid
\putrule from 0.10 0.00 to 0.10 0.01
\putrule from 0.20 0.01 to 0.20 0.03
\putrule from 0.30 0.03 to 0.30 0.14
\putrule from 0.45 0.14 to 0.45 0.11
\putrule from 0.60 0.11 to 0.60 0.19
\putrule from 0.80 0.19 to 0.80 0.11
\putrule from 0.90 0.11 to 0.90 0.08
%
\setsolid
\setquadratic
\setlinear
\linethickness=0.6pt
\Green{\relax\putrule from 0.5 0.045 to 0.6 0.045 }\relax
\footnotesize
\put {Haar Basis} [l] at 0.65 0.045
\Red{\relax\putrule from 0.5 0.03 to 0.6 0.03 }\relax
\put {chapeau Basis} [l] at 0.65 0.03
%
\Red{\relax   
\plot
         0    0.0000
    0.0100    0.0000
    0.0200    0.0002
    0.0300    0.0005
    0.0400    0.0012
    0.0500    0.0021
    0.0600    0.0035
    0.0700    0.0051
    0.0800    0.0070
    0.0900    0.0091
    0.1000    0.0113
    0.1100    0.0134
    0.1200    0.0154
    0.1300    0.0170
    0.1400    0.0183
    0.1500    0.0193
    0.1600    0.0199
    0.1700    0.0202
    0.1800    0.0204
    0.1900    0.0207
    0.2000    0.0213
    0.2100    0.0224
    0.2200    0.0242
    0.2300    0.0270
    0.2400    0.0309
    0.2500    0.0360
    0.2600    0.0423
    0.2700    0.0497
    0.2800    0.0583
    0.2900    0.0678
    0.3000    0.0782
    0.3100    0.0893
    0.3200    0.1008
    0.3300    0.1123
    0.3400    0.1235
    0.3500    0.1340
    0.3600    0.1434
    0.3700    0.1511
    0.3800    0.1568
    0.3900    0.1599
    0.4000    0.1602
    0.4100    0.1577
    0.4200    0.1523
    0.4300    0.1444
    0.4400    0.1347
    0.4500    0.1241
    0.4600    0.1134
    0.4700    0.1038
    0.4800    0.0962
    0.4900    0.0914
    0.5000    0.0898
    0.5100    0.0914
    0.5200    0.0960
    0.5300    0.1030
    0.5400    0.1115
    0.5500    0.1207
    0.5600    0.1297
    0.5700    0.1380
    0.5800    0.1450
    0.5900    0.1508
    0.6000    0.1554
    0.6100    0.1591
    0.6200    0.1625
    0.6300    0.1662
    0.6400    0.1707
    0.6500    0.1761
    0.6600    0.1823
    0.6700    0.1888
    0.6800    0.1949
    0.6900    0.2000
    0.7000    0.2036
    0.7100    0.2051
    0.7200    0.2046
    0.7300    0.2019
    0.7400    0.1973
    0.7500    0.1910
    0.7600    0.1836
    0.7700    0.1752
    0.7800    0.1664
    0.7900    0.1570
    0.8000    0.1473
    0.8100    0.1374
    0.8200    0.1280
    0.8300    0.1198
    0.8400    0.1133
    0.8500    0.1089
    0.8600    0.1065
    0.8700    0.1053
    0.8800    0.1041
    0.8900    0.1014
    0.9000    0.0962
    0.9100    0.0889
    0.9200    0.0818
    0.9300    0.0777
    0.9400    0.0782
    0.9500    0.0818
    0.9600    0.0833
    0.9700    0.0778
    0.9800    0.0702
    0.9900    0.0949
    1.0000    0.0647
 /\relax}\relax
\setlinear
\Green{\relax
\plot    
         0    0.0001
    0.0050    0.0001
    0.0100    0.0001
    0.0150    0.0001
    0.0200    0.0001
    0.0250    0.0001
    0.0300    0.0001
    0.0350    0.0006
    0.0400    0.0006
    0.0450    0.0006
    0.0500    0.0006
    0.0550    0.0006
    0.0600    0.0006
    0.0650    0.0021
    0.0700    0.0021
    0.0750    0.0021
    0.0800    0.0021
    0.0850    0.0021
    0.0900    0.0021
    0.0950    0.0053
    0.1000    0.0053
    0.1050    0.0053
    0.1100    0.0053
    0.1150    0.0053
    0.1200    0.0053
    0.1250    0.0078
    0.1300    0.0103
    0.1350    0.0103
    0.1400    0.0103
    0.1450    0.0103
    0.1500    0.0103
    0.1550    0.0103
    0.1600    0.0173
    0.1650    0.0173
    0.1700    0.0173
    0.1750    0.0173
    0.1800    0.0173
    0.1850    0.0173
    0.1900    0.0282
    0.1950    0.0282
    0.2000    0.0282
    0.2050    0.0282
    0.2100    0.0282
    0.2150    0.0282
    0.2200    0.0443
    0.2250    0.0443
    0.2300    0.0443
    0.2350    0.0443
    0.2400    0.0443
    0.2450    0.0443
    0.2500    0.0534
    0.2550    0.0625
    0.2600    0.0625
    0.2650    0.0625
    0.2700    0.0625
    0.2750    0.0625
    0.2800    0.0625
    0.2850    0.0808
    0.2900    0.0808
    0.2950    0.0808
    0.3000    0.0808
    0.3050    0.0808
    0.3100    0.0808
    0.3150    0.1028
    0.3200    0.1028
    0.3250    0.1028
    0.3300    0.1028
    0.3350    0.1028
    0.3400    0.1028
    0.3450    0.1270
    0.3500    0.1270
    0.3550    0.1270
    0.3600    0.1270
    0.3650    0.1270
    0.3700    0.1270
    0.3750    0.1336
    0.3800    0.1402
    0.3850    0.1402
    0.3900    0.1402
    0.3950    0.1402
    0.4000    0.1402
    0.4050    0.1402
    0.4100    0.1383
    0.4150    0.1383
    0.4200    0.1383
    0.4250    0.1383
    0.4300    0.1383
    0.4350    0.1383
    0.4400    0.1313
    0.4450    0.1313
    0.4500    0.1313
    0.4550    0.1313
    0.4600    0.1313
    0.4650    0.1313
    0.4700    0.1230
    0.4750    0.1230
    0.4800    0.1230
    0.4850    0.1230
    0.4900    0.1230
    0.4950    0.1230
    0.5000    0.1174
    0.5050    0.1117
    0.5100    0.1117
    0.5150    0.1117
    0.5200    0.1117
    0.5250    0.1117
    0.5300    0.1117
    0.5350    0.1062
    0.5400    0.1062
    0.5450    0.1062
    0.5500    0.1062
    0.5550    0.1062
    0.5600    0.1062
    0.5650    0.1199
    0.5700    0.1199
    0.5750    0.1199
    0.5800    0.1199
    0.5850    0.1199
    0.5900    0.1199
    0.5950    0.1508
    0.6000    0.1508
    0.6050    0.1508
    0.6100    0.1508
    0.6150    0.1508
    0.6200    0.1508
    0.6250    0.1655
    0.6300    0.1801
    0.6350    0.1801
    0.6400    0.1801
    0.6450    0.1801
    0.6500    0.1801
    0.6550    0.1801
    0.6600    0.1961
    0.6650    0.1961
    0.6700    0.1961
    0.6750    0.1961
    0.6800    0.1961
    0.6850    0.1961
    0.6900    0.2049
    0.6950    0.2049
    0.7000    0.2049
    0.7050    0.2049
    0.7100    0.2049
    0.7150    0.2049
    0.7200    0.2034
    0.7250    0.2034
    0.7300    0.2034
    0.7350    0.2034
    0.7400    0.2034
    0.7450    0.2034
    0.7500    0.1914
    0.7550    0.1793
    0.7600    0.1793
    0.7650    0.1793
    0.7700    0.1793
    0.7750    0.1793
    0.7800    0.1793
    0.7850    0.1440
    0.7900    0.1440
    0.7950    0.1440
    0.8000    0.1440
    0.8050    0.1440
    0.8100    0.1440
    0.8150    0.1206
    0.8200    0.1206
    0.8250    0.1206
    0.8300    0.1206
    0.8350    0.1206
    0.8400    0.1206
    0.8450    0.1102
    0.8500    0.1102
    0.8550    0.1102
    0.8600    0.1102
    0.8650    0.1102
    0.8700    0.1102
    0.8750    0.1038
    0.8800    0.0974
    0.8850    0.0974
    0.8900    0.0974
    0.8950    0.0974
    0.9000    0.0974
    0.9050    0.0974
    0.9100    0.0812
    0.9150    0.0812
    0.9200    0.0812
    0.9250    0.0812
    0.9300    0.0812
    0.9350    0.0812
    0.9400    0.0845
    0.9450    0.0845
    0.9500    0.0845
    0.9550    0.0845
    0.9600    0.0845
    0.9650    0.0845
    0.9700    0.0769
    0.9750    0.0769
    0.9800    0.0769
    0.9850    0.0769
    0.9900    0.0769
    0.9950    0.0769
    1.0000    0.0532
 /\relax}\relax
\endpicture
}
\setbox\figurenine=\vbox{\hsize=\xfiglen
\beginpicture
\eightrm
  \setcoordinatesystem units <\xfiglen,\yfiglen> 
  \setplotarea x from 0 to 1, y from 0 to 0.16
  \axis bottom shiftedto y=0 ticks short numbered from 0 to 1 by 0.2 /
  \axis left ticks short numbered from 0 to 0.16 by 0.04 /
%
\footnotesize
\put {$\kappa(x)$} [lt] at 0.02 0.16
\small
\linethickness=1.2pt
\putrule from 0.00 0.00 to 0.10 0.00
\putrule from 0.10 0.01 to 0.20 0.10
\putrule from 0.20 0.03 to 0.30 0.30
\putrule from 0.30 0.14 to 0.45 0.14
\putrule from 0.45 0.11 to 0.60 0.11
\putrule from 0.60 0.19 to 0.80 0.19
\putrule from 0.80 0.11 to 0.90 0.11
\putrule from 0.90 0.08 to 1.00 0.08
%
\setsolid
\setquadratic
\linethickness=0.5pt
%
\Blue{\relax   
\plot
         0    0.0000
    0.0100    0.0000
    0.0200    0.0002
    0.0300    0.0005
    0.0400    0.0012
    0.0500    0.0021
    0.0600    0.0035
    0.0700    0.0051
    0.0800    0.0070
    0.0900    0.0091
    0.1000    0.0113
    0.1100    0.0134
    0.1200    0.0154
    0.1300    0.0170
    0.1400    0.0183
    0.1500    0.0193
    0.1600    0.0199
    0.1700    0.0202
    0.1800    0.0204
    0.1900    0.0207
    0.2000    0.0213
    0.2100    0.0224
    0.2200    0.0242
    0.2300    0.0270
    0.2400    0.0309
    0.2500    0.0360
    0.2600    0.0423
    0.2700    0.0497
    0.2800    0.0583
    0.2900    0.0678
    0.3000    0.0782
    0.3100    0.0893
    0.3200    0.1008
    0.3300    0.1123
    0.3400    0.1235
    0.3500    0.1340
    0.3600    0.1434
    0.3700    0.1511
    0.3800    0.1568
    0.3900    0.1599
    0.4000    0.1602
    0.4100    0.1577
    0.4200    0.1523
    0.4300    0.1444
    0.4400    0.1347
    0.4500    0.1241
    0.4600    0.1134
    0.4700    0.1038
    0.4800    0.0962
    0.4900    0.0914
    0.5000    0.0898
    0.5100    0.0914
    0.5200    0.0960
    0.5300    0.1030
    0.5400    0.1115
    0.5500    0.1207
    0.5600    0.1297
    0.5700    0.1380
    0.5800    0.1450
    0.5900    0.1508
    0.6000    0.1554
    0.6100    0.1591
    0.6200    0.1625
    0.6300    0.1662
    0.6400    0.1707
    0.6500    0.1761
    0.6600    0.1823
    0.6700    0.1888
    0.6800    0.1949
    0.6900    0.2000
    0.7000    0.2036
    0.7100    0.2051
    0.7200    0.2046
    0.7300    0.2019
    0.7400    0.1973
    0.7500    0.1910
    0.7600    0.1836
    0.7700    0.1752
    0.7800    0.1664
    0.7900    0.1570
    0.8000    0.1473
    0.8100    0.1374
    0.8200    0.1280
    0.8300    0.1198
    0.8400    0.1133
    0.8500    0.1089
    0.8600    0.1065
    0.8700    0.1053
    0.8800    0.1041
    0.8900    0.1014
    0.9000    0.0962
    0.9100    0.0889
    0.9200    0.0818
    0.9300    0.0777
    0.9400    0.0782
    0.9500    0.0818
    0.9600    0.0833
    0.9700    0.0778
    0.9800    0.0702
    0.9900    0.0949
    1.0000    0.0647
 /\relax}\relax
\setlinear
\Green{\relax
\plot
         0    0.0001
    0.0050    0.0001
    0.0100    0.0001
    0.0150    0.0001
    0.0200    0.0001
    0.0250    0.0001
    0.0300    0.0001
    0.0350    0.0006
    0.0400    0.0006
    0.0450    0.0006
    0.0500    0.0006
    0.0550    0.0006
    0.0600    0.0006
    0.0650    0.0021
    0.0700    0.0021
    0.0750    0.0021
    0.0800    0.0021
    0.0850    0.0021
    0.0900    0.0021
    0.0950    0.0053
    0.1000    0.0053
    0.1050    0.0053
    0.1100    0.0053
    0.1150    0.0053
    0.1200    0.0053
    0.1250    0.0078
    0.1300    0.0103
    0.1350    0.0103
    0.1400    0.0103
    0.1450    0.0103
    0.1500    0.0103
    0.1550    0.0103
    0.1600    0.0173
    0.1650    0.0173
    0.1700    0.0173
    0.1750    0.0173
    0.1800    0.0173
    0.1850    0.0173
    0.1900    0.0282
    0.1950    0.0282
    0.2000    0.0282
    0.2050    0.0282
    0.2100    0.0282
    0.2150    0.0282
    0.2200    0.0443
    0.2250    0.0443
    0.2300    0.0443
    0.2350    0.0443
    0.2400    0.0443
    0.2450    0.0443
    0.2500    0.0534
    0.2550    0.0625
    0.2600    0.0625
    0.2650    0.0625
    0.2700    0.0625
    0.2750    0.0625
    0.2800    0.0625
    0.2850    0.0808
    0.2900    0.0808
    0.2950    0.0808
    0.3000    0.0808
    0.3050    0.0808
    0.3100    0.0808
    0.3150    0.1028
    0.3200    0.1028
    0.3250    0.1028
    0.3300    0.1028
    0.3350    0.1028
    0.3400    0.1028
    0.3450    0.1270
    0.3500    0.1270
    0.3550    0.1270
    0.3600    0.1270
    0.3650    0.1270
    0.3700    0.1270
    0.3750    0.1336
    0.3800    0.1402
    0.3850    0.1402
    0.3900    0.1402
    0.3950    0.1402
    0.4000    0.1402
    0.4050    0.1402
    0.4100    0.1383
    0.4150    0.1383
    0.4200    0.1383
    0.4250    0.1383
    0.4300    0.1383
    0.4350    0.1383
    0.4400    0.1313
    0.4450    0.1313
    0.4500    0.1313
    0.4550    0.1313
    0.4600    0.1313
    0.4650    0.1313
    0.4700    0.1230
    0.4750    0.1230
    0.4800    0.1230
    0.4850    0.1230
    0.4900    0.1230
    0.4950    0.1230
    0.5000    0.1174
    0.5050    0.1117
    0.5100    0.1117
    0.5150    0.1117
    0.5200    0.1117
    0.5250    0.1117
    0.5300    0.1117
    0.5350    0.1062
    0.5400    0.1062
    0.5450    0.1062
    0.5500    0.1062
    0.5550    0.1062
    0.5600    0.1062
    0.5650    0.1199
    0.5700    0.1199
    0.5750    0.1199
    0.5800    0.1199
    0.5850    0.1199
    0.5900    0.1199
    0.5950    0.1508
    0.6000    0.1508
    0.6050    0.1508
    0.6100    0.1508
    0.6150    0.1508
    0.6200    0.1508
    0.6250    0.1655
    0.6300    0.1801
    0.6350    0.1801
    0.6400    0.1801
    0.6450    0.1801
    0.6500    0.1801
    0.6550    0.1801
    0.6600    0.1961
    0.6650    0.1961
    0.6700    0.1961
    0.6750    0.1961
    0.6800    0.1961
    0.6850    0.1961
    0.6900    0.2049
    0.6950    0.2049
    0.7000    0.2049
    0.7050    0.2049
    0.7100    0.2049
    0.7150    0.2049
    0.7200    0.2034
    0.7250    0.2034
    0.7300    0.2034
    0.7350    0.2034
    0.7400    0.2034
    0.7450    0.2034
    0.7500    0.1914
    0.7550    0.1793
    0.7600    0.1793
    0.7650    0.1793
    0.7700    0.1793
    0.7750    0.1793
    0.7800    0.1793
    0.7850    0.1440
    0.7900    0.1440
    0.7950    0.1440
    0.8000    0.1440
    0.8050    0.1440
    0.8100    0.1440
    0.8150    0.1206
    0.8200    0.1206
    0.8250    0.1206
    0.8300    0.1206
    0.8350    0.1206
    0.8400    0.1206
    0.8450    0.1102
    0.8500    0.1102
    0.8550    0.1102
    0.8600    0.1102
    0.8650    0.1102
    0.8700    0.1102
    0.8750    0.1038
    0.8800    0.0974
    0.8850    0.0974
    0.8900    0.0974
    0.8950    0.0974
    0.9000    0.0974
    0.9050    0.0974
    0.9100    0.0812
    0.9150    0.0812
    0.9200    0.0812
    0.9250    0.0812
    0.9300    0.0812
    0.9350    0.0812
    0.9400    0.0845
    0.9450    0.0845
    0.9500    0.0845
    0.9550    0.0845
    0.9600    0.0845
    0.9650    0.0845
    0.9700    0.0769
    0.9750    0.0769
    0.9800    0.0769
    0.9850    0.0769
    0.9900    0.0769
    0.9950    0.0769
    1.0000    0.0532
 /\relax}\relax
\endpicture
}
\yfiglen=17 true in
\setbox\figureten=\vbox{\hsize=\xfiglen
\beginpicture
\eightrm
  \setcoordinatesystem units <\xfiglen,\yfiglen> 
  \setplotarea x from 0 to 1, y from 0 to 0.13
  \axis bottom shiftedto y=0 ticks short numbered from 0 to 1 by 0.2 /
  \axis left ticks short numbered from 0 to 0.12 by 0.02 /
\footnotesize
\put {$\kappa(x)$} [lt] at 0.02 0.13
\put{\box\figurelegendfive} [rt] at 0.9 0.12
\linethickness=0.6pt
\setplotsymbol ({\fiverm .})
\setquadratic
\setlinear
\setsolid
\Black{\relax  
\plot
         0         0
    0.0100         0
    0.0200         0
    0.0300         0
    0.0400         0
    0.0500         0
    0.0600         0
    0.0700         0
    0.0800         0
    0.0900         0
    0.1000         0
    0.1100    0.0003
    0.1200    0.0012
    0.1300    0.0027
    0.1400    0.0047
    0.1500    0.0073
    0.1600    0.0104
    0.1700    0.0140
    0.1800    0.0181
    0.1900    0.0227
    0.2000    0.0277
    0.2100    0.0331
    0.2200    0.0389
    0.2300    0.0450
    0.2400    0.0514
    0.2500    0.0580
    0.2600    0.0647
    0.2700    0.0716
    0.2800    0.0784
    0.2900    0.0851
    0.3000    0.0917
    0.3100    0.0979
    0.3200    0.1038
    0.3300    0.1092
    0.3400    0.1140
    0.3500    0.1182
    0.3600    0.1215
    0.3700    0.1239
    0.3800    0.1254
    0.3900    0.1259
    0.4000    0.1254
    0.4100    0.1237
    0.4200    0.1209
    0.4300    0.1171
    0.4400    0.1122
    0.4500    0.1064
    0.4600    0.0996
    0.4700    0.0922
    0.4800    0.0841
    0.4900    0.0755
    0.5000    0.0667
    0.5100    0.0578
    0.5200    0.0490
    0.5300    0.0405
    0.5400    0.0326
    0.5500    0.0253
    0.5600    0.0190
    0.5700    0.0138
    0.5800    0.0097
    0.5900    0.0068
    0.6000    0.0053
    0.6100    0.0051
    0.6200    0.0062
    0.6300    0.0084
    0.6400    0.0117
    0.6500    0.0159
    0.6600    0.0207
    0.6700    0.0260
    0.6800    0.0315
    0.6900    0.0369
    0.7000    0.0419
    0.7100    0.0464
    0.7200    0.0500
    0.7300    0.0526
    0.7400    0.0541
    0.7500    0.0544
    0.7600    0.0534
    0.7700    0.0512
    0.7800    0.0479
    0.7900    0.0437
    0.8000    0.0387
    0.8100    0.0332
    0.8200    0.0275
    0.8300    0.0218
    0.8400    0.0164
    0.8500    0.0116
    0.8600    0.0074
    0.8700    0.0041
    0.8800    0.0018
    0.8900    0.0004
    0.9000         0
    0.9100         0
    0.9200         0
    0.9300         0
    0.9400         0
    0.9500         0
    0.9600         0
    0.9700         0
    0.9800         0
    0.9900         0
    1.0000         0
 /\relax}\relax
\setsolid
\Blue{\relax  
\plot
  0.0000   0.0000
  0.0150   0.0000
  0.0300   0.0000
  0.0450   0.0000
  0.0600   0.0001
  0.0750   0.0001
  0.0900   0.0002
  0.1050   0.0003
  0.1200   0.0004
  0.1350   0.0005
  0.1500   0.0007
  0.1650   0.0009
  0.1800   0.0012
  0.1950   0.0015
  0.2100   0.0019
  0.2250   0.0022
  0.2400   0.0027
  0.2550   0.0031
  0.2700   0.0037
  0.2850   0.0042
  0.3000   0.0048
  0.3150   0.0055
  0.3300   0.0062
  0.3450   0.0069
  0.3600   0.0077
  0.3750   0.0085
  0.3900   0.0094
  0.4050   0.0103
  0.4200   0.0112
  0.4350   0.0122
  0.4500   0.0132
  0.4650   0.0142
  0.4800   0.0152
  0.4950   0.0163
  0.5100   0.0174
  0.5250   0.0185
  0.5400   0.0196
  0.5550   0.0207
  0.5700   0.0218
  0.5850   0.0230
  0.6000   0.0241
  0.6150   0.0253
  0.6300   0.0264
  0.6450   0.0276
  0.6600   0.0287
  0.6750   0.0298
  0.6900   0.0310
  0.7050   0.0321
  0.7200   0.0332
  0.7350   0.0342
  0.7500   0.0353
  0.7650   0.0363
  0.7800   0.0373
  0.7950   0.0382
  0.8100   0.0392
  0.8250   0.0401
  0.8400   0.0409
  0.8550   0.0417
  0.8700   0.0425
  0.8850   0.0432
  0.9000   0.0439
  0.9150   0.0446
  0.9300   0.0452
  0.9450   0.0458
  0.9600   0.0467
  0.9750   0.0478
  0.9900   0.0328
 /\relax}\relax
\Orange{\relax  
\plot
  0.0000   0.0000
  0.0150   0.0000
  0.0300   0.0000
  0.0450   0.0001
  0.0600   0.0001
  0.0750   0.0002
  0.0900   0.0004
  0.1050   0.0006
  0.1200   0.0009
  0.1350   0.0013
  0.1500   0.0017
  0.1650   0.0022
  0.1800   0.0028
  0.1950   0.0035
  0.2100   0.0042
  0.2250   0.0051
  0.2400   0.0060
  0.2550   0.0070
  0.2700   0.0081
  0.2850   0.0093
  0.3000   0.0105
  0.3150   0.0119
  0.3300   0.0132
  0.3450   0.0146
  0.3600   0.0160
  0.3750   0.0174
  0.3900   0.0189
  0.4050   0.0203
  0.4200   0.0217
  0.4350   0.0230
  0.4500   0.0243
  0.4650   0.0256
  0.4800   0.0267
  0.4950   0.0278
  0.5100   0.0288
  0.5250   0.0298
  0.5400   0.0306
  0.5550   0.0314
  0.5700   0.0320
  0.5850   0.0326
  0.6000   0.0331
  0.6150   0.0335
  0.6300   0.0338
  0.6450   0.0340
  0.6600   0.0342
  0.6750   0.0343
  0.6900   0.0343
  0.7050   0.0343
  0.7200   0.0341
  0.7350   0.0339
  0.7500   0.0336
  0.7650   0.0332
  0.7800   0.0327
  0.7950   0.0321
  0.8100   0.0314
  0.8250   0.0307
  0.8400   0.0300
  0.8550   0.0292
  0.8700   0.0284
  0.8850   0.0276
  0.9000   0.0268
  0.9150   0.0262
  0.9300   0.0256
  0.9450   0.0251
  0.9600   0.0249
  0.9750   0.0248
  0.9900   0.0169
  1.00     0.0089
 /\relax}\relax
\setsolid
\OliveGreen{\relax  
\plot
  0.0000   0.0000
  0.0150   0.0000
  0.0300   0.0001
  0.0450   0.0002
  0.0600   0.0005
  0.0750   0.0009
  0.0900   0.0016
  0.1050   0.0024
  0.1200   0.0035
  0.1350   0.0049
  0.1500   0.0064
  0.1650   0.0084
  0.1800   0.0106
  0.1950   0.0131
  0.2100   0.0160
  0.2250   0.0190
  0.2400   0.0224
  0.2550   0.0260
  0.2700   0.0298
  0.2850   0.0337
  0.3000   0.0378
  0.3150   0.0419
  0.3300   0.0459
  0.3450   0.0499
  0.3600   0.0537
  0.3750   0.0574
  0.3900   0.0606
  0.4050   0.0636
  0.4200   0.0661
  0.4350   0.0680
  0.4500   0.0697
  0.4650   0.0704
  0.4800   0.0708
  0.4950   0.0705
  0.5100   0.0697
  0.5250   0.0685
  0.5400   0.0666
  0.5550   0.0645
  0.5700   0.0621
  0.5850   0.0594
  0.6000   0.0567
  0.6150   0.0540
  0.6300   0.0513
  0.6450   0.0487
  0.6600   0.0463
  0.6750   0.0439
  0.6900   0.0419
  0.7050   0.0399
  0.7200   0.0379
  0.7350   0.0360
  0.7500   0.0340
  0.7650   0.0318
  0.7800   0.0295
  0.7950   0.0269
  0.8100   0.0241
  0.8250   0.0211
  0.8400   0.0178
  0.8550   0.0145
  0.8700   0.0112
  0.8850   0.0080
  0.9000   0.0049
  0.9150   0.0023
  0.9300  -0.0001
  0.9450  -0.0022
  0.9600  -0.0039
  0.9750  -0.0054
  0.9900  -0.0040
  1.0000  -0.0025
 /\relax}\relax
\Goldenrod{\relax  
\plot
  0.0000   0.0000
  0.0150   0.0001
  0.0300   0.0002
  0.0450   0.0006
  0.0600   0.0013
  0.0750   0.0021
  0.0900   0.0038
  0.1050   0.0057
  0.1200   0.0082
  0.1350   0.0114
  0.1500   0.0149
  0.1650   0.0196
  0.1800   0.0246
  0.1950   0.0303
  0.2100   0.0366
  0.2250   0.0432
  0.2400   0.0506
  0.2550   0.0581
  0.2700   0.0658
  0.2850   0.0735
  0.3000   0.0812
  0.3150   0.0882
  0.3300   0.0949
  0.3450   0.1007
  0.3600   0.1053
  0.3750   0.1093
  0.3900   0.1109
  0.4050   0.1115
  0.4200   0.1103
  0.4350   0.1072
  0.4500   0.1031
  0.4650   0.0962
  0.4800   0.0886
  0.4950   0.0796
  0.5100   0.0697
  0.5250   0.0595
  0.5400   0.0492
  0.5550   0.0394
  0.5700   0.0305
  0.5850   0.0233
  0.6000   0.0169
  0.6150   0.0140
  0.6300   0.0122
  0.6450   0.0128
  0.6600   0.0155
  0.6750   0.0192
  0.6900   0.0247
  0.7050   0.0304
  0.7200   0.0361
  0.7350   0.0407
  0.7500   0.0448
  0.7650   0.0459
  0.7800   0.0457
  0.7950   0.0432
  0.8100   0.0386
  0.8250   0.0330
  0.8400   0.0257
  0.8550   0.0185
  0.8700   0.0115
  0.8850   0.0058
  0.9000   0.0007
  0.9150  -0.0016
  0.9300  -0.0032
  0.9450  -0.0033
  0.9600  -0.0028
  0.9750  -0.0020
  0.9900  -0.0012
  1.0000  -0.0004
 /\relax}\relax
\Red{\relax  
\plot
  0.0000   0.0000
  0.0150   0.0001
  0.0300   0.0002
  0.0450   0.0005
  0.0600   0.0012
  0.0750   0.0020
  0.0900   0.0035
  0.1050   0.0053
  0.1200   0.0077
  0.1350   0.0109
  0.1500   0.0145
  0.1650   0.0193
  0.1800   0.0245
  0.1950   0.0306
  0.2100   0.0376
  0.2250   0.0450
  0.2400   0.0534
  0.2550   0.0621
  0.2700   0.0713
  0.2850   0.0805
  0.3000   0.0897
  0.3150   0.0983
  0.3300   0.1063
  0.3450   0.1132
  0.3600   0.1184
  0.3750   0.1228
  0.3900   0.1237
  0.4050   0.1234
  0.4200   0.1203
  0.4350   0.1146
  0.4500   0.1076
  0.4650   0.0970
  0.4800   0.0857
  0.4950   0.0728
  0.5100   0.0595
  0.5250   0.0460
  0.5400   0.0338
  0.5550   0.0227
  0.5700   0.0138
  0.5850   0.0081
  0.6000   0.0039
  0.6150   0.0048
  0.6300   0.0073
  0.6450   0.0127
  0.6600   0.0200
  0.6750   0.0282
  0.6900   0.0367
  0.7050   0.0444
  0.7200   0.0506
  0.7350   0.0539
  0.7500   0.0557
  0.7650   0.0527
  0.7800   0.0483
  0.7950   0.0413
  0.8100   0.0330
  0.8250   0.0242
  0.8400   0.0162
  0.8550   0.0092
  0.8700   0.0041
  0.8850   0.0012
  0.9000  -0.0005
  0.9150  -0.0002
  0.9300   0.0002
  0.9450   0.0006
  0.9600   0.0004
  0.9750  -0.0001
  0.9900  -0.0004
  1.0000   0.0001
 /\relax}\relax
\Black{\relax  
\plot
         0         0
    0.0100         0
    0.0200         0
    0.0300         0
    0.0400         0
    0.0500         0
    0.0600         0
    0.0700         0
    0.0800         0
    0.0900         0
    0.1000         0
    0.1100    0.0003
    0.1200    0.0012
    0.1300    0.0027
    0.1400    0.0047
    0.1500    0.0073
    0.1600    0.0104
    0.1700    0.0140
    0.1800    0.0181
    0.1900    0.0227
    0.2000    0.0277
    0.2100    0.0331
    0.2200    0.0389
    0.2300    0.0450
    0.2400    0.0514
    0.2500    0.0580
    0.2600    0.0647
    0.2700    0.0716
    0.2800    0.0784
    0.2900    0.0851
    0.3000    0.0917
    0.3100    0.0979
    0.3200    0.1038
    0.3300    0.1092
    0.3400    0.1140
    0.3500    0.1182
    0.3600    0.1215
    0.3700    0.1239
    0.3800    0.1254
    0.3900    0.1259
    0.4000    0.1254
    0.4100    0.1237
    0.4200    0.1209
    0.4300    0.1171
    0.4400    0.1122
    0.4500    0.1064
    0.4600    0.0996
    0.4700    0.0922
    0.4800    0.0841
    0.4900    0.0755
    0.5000    0.0667
    0.5100    0.0578
    0.5200    0.0490
    0.5300    0.0405
    0.5400    0.0326
    0.5500    0.0253
    0.5600    0.0190
    0.5700    0.0138
    0.5800    0.0097
    0.5900    0.0068
    0.6000    0.0053
    0.6100    0.0051
    0.6200    0.0062
    0.6300    0.0084
    0.6400    0.0117
    0.6500    0.0159
    0.6600    0.0207
    0.6700    0.0260
    0.6800    0.0315
    0.6900    0.0369
    0.7000    0.0419
    0.7100    0.0464
    0.7200    0.0500
    0.7300    0.0526
    0.7400    0.0541
    0.7500    0.0544
    0.7600    0.0534
    0.7700    0.0512
    0.7800    0.0479
    0.7900    0.0437
    0.8000    0.0387
    0.8100    0.0332
    0.8200    0.0275
    0.8300    0.0218
    0.8400    0.0164
    0.8500    0.0116
    0.8600    0.0074
    0.8700    0.0041
    0.8800    0.0018
    0.8900    0.0004
    0.9000         0
    0.9100         0
    0.9200         0
    0.9300         0
    0.9400         0
    0.9500         0
    0.9600         0
    0.9700         0
    0.9800         0
    0.9900         0
    1.0000         0
 /\relax}\relax
\endpicture
}
\setbox\figuretwelve=\vbox{\hsize=\xfiglen
\beginpicture
\eightrm
  \setcoordinatesystem units <0.01\xfiglen,10\yfiglen> 
  \setplotarea x from 0 to 100, y from 0 to 0.012
  \axis bottom shiftedto y=0 ticks short from 10 to 100  by 10 /
  \axis left ticks short numbered from 0 to 0.012 by 0.002 /
\put {10000} at 100 -0.001
\put {5000} at 50 -0.001
\put {1000} at 10 -0.001
\put {$n$} at 100 0.001
\put {$\|\kappa_{n}-\kappa_{\rm act}\|_\infty$} [lt] at 1 0.012
\plot
     1	    0.011106  
     2	    0.007826  
     3	    0.006022  
     4	    0.004831  
     5	    0.003974  
     6	    0.003331  
     7	    0.002834  
     8	    0.002439  
     9	    0.002118  
    10	    0.001854  
    11	    0.001632  
    12	    0.001445  
    13	    0.001286  
    14	    0.001149  
    15	    0.001032  
    16	    0.000932  
    17	    0.000845  
    18	    0.000770  
    19	    0.000706  
    20	    0.000650  
    21	    0.000601  
    22	    0.000559  
    23	    0.000522  
    24	    0.000489  
    25	    0.000461  
    26	    0.000436  
    27	    0.000413  
    28	    0.000393  
    29	    0.000375  
    30	    0.000359  
    31	    0.000344  
    32	    0.000331  
    33	    0.000318  
    34	    0.000307  
    35	    0.000297  
    36	    0.000287  
    37	    0.000278  
    38	    0.000270  
    39	    0.000262  
    40	    0.000255  
    41	    0.000248  
    42	    0.000242  
    43	    0.000236  
    44	    0.000231  
    45	    0.000225  
    46	    0.000220  
    47	    0.000216  
    48	    0.000211  
    49	    0.000207  
    50	    0.000203  
    51	    0.000199  
    52	    0.000195  
    53	    0.000192  
    54	    0.000188  
    55	    0.000185  
    56	    0.000182  
    57	    0.000179  
    58	    0.000176  
    59	    0.000174  
    60	    0.000171  
    61	    0.000169  
    62	    0.000166  
    63	    0.000164  
    64	    0.000161  
    65	    0.000159  
    66	    0.000157  
    67	    0.000155  
    68	    0.000153  
    69	    0.000151  
    70	    0.000149  
    71	    0.000147  
    72	    0.000146  
    73	    0.000144  
    74	    0.000142  
    75	    0.000140  
    76	    0.000139  
    77	    0.000137  
    78	    0.000136  
    79	    0.000134  
    80	    0.000133  
    81	    0.000131  
    82	    0.000130  
    83	    0.000128  
    84	    0.000127  
    85	    0.000126  
    86	    0.000125  
    87	    0.000123  
    88	    0.000122  
    89	    0.000121  
    90	    0.000120  
    91	    0.000118  
    92	    0.000117  
    93	    0.000116  
    94	    0.000115  
    95	    0.000114  
    96	    0.000113  
    97	    0.000112  
    98	    0.000111  
    99	    0.000110  
   100	    0.000109 
/
\endpicture
}
\yfiglen=17 true in
\setbox\figurethirteen=\vbox{\hsize=\xfiglen
\beginpicture
\eightrm
  \setcoordinatesystem units <\xfiglen,\yfiglen> 
  \setplotarea x from 0 to 1, y from 0 to 0.13
  \axis bottom shiftedto y=0 ticks short numbered from 0 to 1 by 0.2 /
  \axis left ticks short numbered from 0 to 0.12 by 0.02 /
%
\footnotesize
\put {$\kappa(x)$} [lt] at 0.02 0.13
\small
\linethickness=0.5pt
\setplotsymbol ({\sevenrm .})
\setquadratic
\setlinear
\setsolid
\Black{\relax  
\plot
         0         0
    0.0100         0
    0.0200         0
    0.0300         0
    0.0400         0
    0.0500         0
    0.0600         0
    0.0700         0
    0.0800         0
    0.0900         0
    0.1000         0
    0.1100    0.0003
    0.1200    0.0012
    0.1300    0.0027
    0.1400    0.0047
    0.1500    0.0073
    0.1600    0.0104
    0.1700    0.0140
    0.1800    0.0181
    0.1900    0.0227
    0.2000    0.0277
    0.2100    0.0331
    0.2200    0.0389
    0.2300    0.0450
    0.2400    0.0514
    0.2500    0.0580
    0.2600    0.0647
    0.2700    0.0716
    0.2800    0.0784
    0.2900    0.0851
    0.3000    0.0917
    0.3100    0.0979
    0.3200    0.1038
    0.3300    0.1092
    0.3400    0.1140
    0.3500    0.1182
    0.3600    0.1215
    0.3700    0.1239
    0.3800    0.1254
    0.3900    0.1259
    0.4000    0.1254
    0.4100    0.1237
    0.4200    0.1209
    0.4300    0.1171
    0.4400    0.1122
    0.4500    0.1064
    0.4600    0.0996
    0.4700    0.0922
    0.4800    0.0841
    0.4900    0.0755
    0.5000    0.0667
    0.5100    0.0578
    0.5200    0.0490
    0.5300    0.0405
    0.5400    0.0326
    0.5500    0.0253
    0.5600    0.0190
    0.5700    0.0138
    0.5800    0.0097
    0.5900    0.0068
    0.6000    0.0053
    0.6100    0.0051
    0.6200    0.0062
    0.6300    0.0084
    0.6400    0.0117
    0.6500    0.0159
    0.6600    0.0207
    0.6700    0.0260
    0.6800    0.0315
    0.6900    0.0369
    0.7000    0.0419
    0.7100    0.0464
    0.7200    0.0500
    0.7300    0.0526
    0.7400    0.0541
    0.7500    0.0544
    0.7600    0.0534
    0.7700    0.0512
    0.7800    0.0479
    0.7900    0.0437
    0.8000    0.0387
    0.8100    0.0332
    0.8200    0.0275
    0.8300    0.0218
    0.8400    0.0164
    0.8500    0.0116
    0.8600    0.0074
    0.8700    0.0041
    0.8800    0.0018
    0.8900    0.0004
    0.9000         0
    0.9100         0
    0.9200         0
    0.9300         0
    0.9400         0
    0.9500         0
    0.9600         0
    0.9700         0
    0.9800         0
    0.9900         0
    1.0000         0
/\relax}\relax
\Red{\relax  
\plot
  0.0000   0.0000
  0.0150   0.0000
  0.0300   0.0001
  0.0450   0.0002
  0.0600   0.0004
  0.0750   0.0008
  0.0900   0.0016
  0.1050   0.0027
  0.1200   0.0044
  0.1350   0.0069
  0.1500   0.0098
  0.1650   0.0142
  0.1800   0.0192
  0.1950   0.0253
  0.2100   0.0326
  0.2250   0.0405
  0.2400   0.0499
  0.2550   0.0595
  0.2700   0.0697
  0.2850   0.0800
  0.3000   0.0903
  0.3150   0.0997
  0.3300   0.1084
  0.3450   0.1158
  0.3600   0.1211
  0.3750   0.1255
  0.3900   0.1260
  0.4050   0.1251
  0.4200   0.1214
  0.4350   0.1150
  0.4500   0.1072
  0.4650   0.0960
  0.4800   0.0841
  0.4950   0.0709
  0.5100   0.0574
  0.5250   0.0439
  0.5400   0.0321
  0.5550   0.0215
  0.5700   0.0131
  0.5850   0.0079
  0.6000   0.0043
  0.6150   0.0056
  0.6300   0.0084
  0.6450   0.0139
  0.6600   0.0211
  0.6750   0.0292
  0.6900   0.0373
  0.7050   0.0447
  0.7200   0.0505
  0.7350   0.0536
  0.7500   0.0554
  0.7650   0.0525
  0.7800   0.0482
  0.7950   0.0414
  0.8100   0.0333
  0.8250   0.0246
  0.8400   0.0166
  0.8550   0.0095
  0.8700   0.0043
  0.8850   0.0011
  0.9000   0.0000
  0.9150   0.0000
  0.9300   0.0001
  0.9450   0.0001
  0.9600   0.0002
  0.9750   0.0002
  0.9900   0.0001
/\relax}\relax
\Blue{\relax  
\plot
         0         0
    0.0100         0
    0.0200         0
    0.0300         0
    0.0400         0
    0.0500         0
    0.0600         0
    0.0700         0
    0.0800         0
    0.0900         0
    0.1000    0.0002
    0.1100    0.0010
    0.1200    0.0018
    0.1300    0.0032
    0.1400    0.0051
    0.1500    0.0070
    0.1600    0.0103
    0.1700    0.0136
    0.1800    0.0176
    0.1900    0.0223
    0.2000    0.0270
    0.2100    0.0329
    0.2200    0.0388
    0.2300    0.0451
    0.2400    0.0517
    0.2500    0.0583
    0.2600    0.0651
    0.2700    0.0719
    0.2800    0.0786
    0.2900    0.0852
    0.3000    0.0917
    0.3100    0.0977
    0.3200    0.1036
    0.3300    0.1089
    0.3400    0.1136
    0.3500    0.1183
    0.3600    0.1212
    0.3700    0.1240
    0.3800    0.1256
    0.3900    0.1259
    0.4000    0.1262
    0.4100    0.1236
    0.4200    0.1210
    0.4300    0.1171
    0.4400    0.1118
    0.4500    0.1066
    0.4600    0.0993
    0.4700    0.0919
    0.4800    0.0840
    0.4900    0.0754
    0.5000    0.0668
    0.5100    0.0580
    0.5200    0.0491
    0.5300    0.0407
    0.5400    0.0328
    0.5500    0.0249
    0.5600    0.0192
    0.5700    0.0136
    0.5800    0.0095
    0.5900    0.0070
    0.6000    0.0046
    0.6100    0.0054
    0.6200    0.0063
    0.6300    0.0085
    0.6400    0.0121
    0.6500    0.0156
    0.6600    0.0208
    0.6700    0.0260
    0.6800    0.0313
    0.6900    0.0367
    0.7000    0.0421
    0.7100    0.0461
    0.7200    0.0501
    0.7300    0.0527
    0.7400    0.0539
    0.7500    0.0551
    0.7600    0.0531
    0.7700    0.0511
    0.7800    0.0479
    0.7900    0.0434
    0.8000    0.0389
    0.8100    0.0332
    0.8200    0.0275
    0.8300    0.0219
    0.8400    0.0166
    0.8500    0.0112
    0.8600    0.0076
    0.8700    0.0041
    0.8800    0.0017
    0.8900    0.0006
    0.9000    0.0001
    0.9100    0.0000
    0.9200         0
    0.9300         0
    0.9400         0
    0.9500    0.0000
    0.9600    0.0000
    0.9700         0
    0.9800         0
    0.9900         0
    1.0000    0.0000
/\relax}\relax
\endpicture
}
\yfiglen=14 true in
\setbox\figurefourteen=\vbox{\hsize=\xfiglen
\beginpicture
\eightrm
  \setcoordinatesystem units <\xfiglen,\yfiglen>  point at 0 0
  \setplotarea x from 0 to 1, y from 0 to 0.16
  \axis bottom shiftedto y=0 ticks short numbered from 0 to 1 by 0.2 /
  \axis left ticks short numbered from 0 to 0.16 by 0.04 /
\footnotesize
\put {$\kappa(x)$} [lt] at 0.02 0.16
\small
\linethickness=1.8pt
\setplotsymbol ({\sevenrm .})
\setquadratic
\setsolid
\Black{\relax  
\plot
         0         0
    0.0100         0
    0.0200         0
    0.0300         0
    0.0400         0
    0.0500         0
    0.0600    0.0015
    0.0700    0.0030
    0.0800    0.0045
    0.0900    0.0060
    0.1000    0.0075
    0.1100    0.0090
    0.1200    0.0105
    0.1300    0.0120
    0.1400    0.0135
    0.1500    0.0150
    0.1600    0.0165
    0.1700    0.0180
    0.1800    0.0195
    0.1900    0.0210
    0.2000    0.0225
    0.2100    0.0240
    0.2200    0.0255
    0.2300    0.0270
    0.2400    0.0285
    0.2500    0.0300
    0.2600    0.0340
    0.2700    0.0380
    0.2800    0.0420
    0.2900    0.0460
    0.3000    0.0500
    0.3100    0.0503
    0.3200    0.0505
    0.3300    0.0508
    0.3400    0.0510
    0.3500    0.0513
    0.3600    0.0515
    0.3700    0.0517
    0.3800    0.0520
    0.3900    0.0523
    0.4000    0.0525
    0.4100    0.0528
    0.4200    0.0530
    0.4300    0.0533
    0.4400    0.0535
    0.4500    0.0538
    0.4600    0.0540
    0.4700    0.0543
    0.4800    0.0545
    0.4900    0.0548
    0.5000    0.0550
    0.5100    0.0570
    0.5200    0.0590
    0.5300    0.0610
    0.5400    0.0630
    0.5500    0.0650
    0.5600    0.0670
    0.5700    0.0690
    0.5800    0.0710
    0.5900    0.0730
    0.6000    0.0750
    0.6100    0.0825
    0.6200    0.0900
    0.6300    0.0975
    0.6400    0.1050
    0.6500    0.1125
    0.6600    0.1200
    0.6700    0.1275
    0.6800    0.1350
    0.6900    0.1425
    0.7000    0.1500
    0.7100    0.1500
    0.7200    0.1500
    0.7300    0.1500
    0.7400    0.1500
    0.7500    0.1500
    0.7600    0.1500
    0.7700    0.1500
    0.7800    0.1500
    0.7900    0.1500
    0.8000    0.1500
    0.8100    0.1450
    0.8200    0.1400
    0.8300    0.1350
    0.8400    0.1300
    0.8500    0.1250
    0.8600    0.1200
    0.8700    0.1150
    0.8800    0.1100
    0.8900    0.1050
    0.9000    0.1000
    0.9100    0.0800
    0.9200    0.0600
    0.9300    0.0400
    0.9400    0.0200
    0.9500         0
    0.9600         0
    0.9700         0
    0.9800         0
    0.9900         0
    1.0000         0
 /\relax}\relax
\linethickness=0.6pt
\setdashes <3pt>
\setplotsymbol ({\tiny{\BrickRed{$\bullet$}}})
\plotsymbolspacing 3pt
\Red{\relax  
\plot
         0    0.0000
    0.0100    0.0001
    0.0200    0.0002
    0.0300    0.0004
    0.0400    0.0008
    0.0500    0.0012
    0.0600    0.0021
    0.0700    0.0030
    0.0800    0.0040
    0.0900    0.0054
    0.1000    0.0067
    0.1100    0.0083
    0.1200    0.0099
    0.1300    0.0115
    0.1400    0.0131
    0.1500    0.0147
    0.1600    0.0163
    0.1700    0.0179
    0.1800    0.0194
    0.1900    0.0210
    0.2000    0.0226
    0.2100    0.0244
    0.2200    0.0261
    0.2300    0.0281
    0.2400    0.0304
    0.2500    0.0326
    0.2600    0.0352
    0.2700    0.0378
    0.2800    0.0404
    0.2900    0.0430
    0.3000    0.0457
    0.3100    0.0478
    0.3200    0.0499
    0.3300    0.0515
    0.3400    0.0525
    0.3500    0.0536
    0.3600    0.0536
    0.3700    0.0537
    0.3800    0.0534
    0.3900    0.0528
    0.4000    0.0523
    0.4100    0.0519
    0.4200    0.0515
    0.4300    0.0515
    0.4400    0.0518
    0.4500    0.0521
    0.4600    0.0531
    0.4700    0.0541
    0.4800    0.0552
    0.4900    0.0565
    0.5000    0.0577
    0.5100    0.0588
    0.5200    0.0599
    0.5300    0.0610
    0.5400    0.0622
    0.5500    0.0633
    0.5600    0.0652
    0.5700    0.0672
    0.5800    0.0700
    0.5900    0.0738
    0.6000    0.0776
    0.6100    0.0837
    0.6200    0.0898
    0.6300    0.0968
    0.6400    0.1048
    0.6500    0.1127
    0.6600    0.1207
    0.6700    0.1287
    0.6800    0.1355
    0.6900    0.1411
    0.7000    0.1468
    0.7100    0.1487
    0.7200    0.1507
    0.7300    0.1515
    0.7400    0.1511
    0.7500    0.1507
    0.7600    0.1503
    0.7700    0.1499
    0.7800    0.1495
    0.7900    0.1490
    0.8000    0.1485
    0.8100    0.1449
    0.8200    0.1413
    0.8300    0.1363
    0.8400    0.1300
    0.8500    0.1237
    0.8600    0.1195
    0.8700    0.1154
    0.8800    0.1105
    0.8900    0.1049
    0.9000    0.0993
    0.9100    0.0799
    0.9200    0.0605
    0.9300    0.0403
    0.9400    0.0195
    0.9500    0.0003
    0.9600    0.0000
    0.9700    0.0000
    0.9800         0
    0.9900         0
    1.0000         0
/\relax}\relax
\setplotsymbol ({\sixrm{\Blue{.}}})
\plotsymbolspacing 2pt
\RoyalBlue{\relax  
\plot
         0    0.0000
    0.0100    0.0002
    0.0200    0.0003
    0.0300    0.0007
    0.0400    0.0013
    0.0500    0.0020
    0.0600    0.0032
    0.0700    0.0044
    0.0800    0.0058
    0.0900    0.0074
    0.1000    0.0089
    0.1100    0.0103
    0.1200    0.0117
    0.1300    0.0129
    0.1400    0.0139
    0.1500    0.0150
    0.1600    0.0159
    0.1700    0.0168
    0.1800    0.0179
    0.1900    0.0192
    0.2000    0.0205
    0.2100    0.0227
    0.2200    0.0248
    0.2300    0.0274
    0.2400    0.0303
    0.2500    0.0332
    0.2600    0.0363
    0.2700    0.0394
    0.2800    0.0422
    0.2900    0.0448
    0.3000    0.0473
    0.3100    0.0488
    0.3200    0.0502
    0.3300    0.0512
    0.3400    0.0516
    0.3500    0.0520
    0.3600    0.0519
    0.3700    0.0518
    0.3800    0.0518
    0.3900    0.0519
    0.4000    0.0519
    0.4100    0.0523
    0.4200    0.0527
    0.4300    0.0532
    0.4400    0.0536
    0.4500    0.0541
    0.4600    0.0544
    0.4700    0.0547
    0.4800    0.0551
    0.4900    0.0555
    0.5000    0.0559
    0.5100    0.0570
    0.5200    0.0582
    0.5300    0.0598
    0.5400    0.0618
    0.5500    0.0638
    0.5600    0.0663
    0.5700    0.0689
    0.5800    0.0718
    0.5900    0.0751
    0.6000    0.0783
    0.6100    0.0836
    0.6200    0.0889
    0.6300    0.0955
    0.6400    0.1035
    0.6500    0.1114
    0.6600    0.1201
    0.6700    0.1288
    0.6800    0.1361
    0.6900    0.1421
    0.7000    0.1481
    0.7100    0.1496
    0.7200    0.1511
    0.7300    0.1515
    0.7400    0.1506
    0.7500    0.1497
    0.7600    0.1495
    0.7700    0.1493
    0.7800    0.1492
    0.7900    0.1492
    0.8000    0.1492
    0.8100    0.1455
    0.8200    0.1418
    0.8300    0.1366
    0.8400    0.1297
    0.8500    0.1229
    0.8600    0.1190
    0.8700    0.1152
    0.8800    0.1106
    0.8900    0.1053
    0.9000    0.0999
    0.9100    0.0801
    0.9200    0.0602
    0.9300    0.0399
    0.9400    0.0194
    0.9500    0.0007
    0.9600    0.0002
    0.9700         0
    0.9800         0
    0.9900         0
    1.0000    0.0001
/\relax}\relax
\endpicture
}
\yfiglen=67 true in
\setbox\figurefifteen=\vbox{\hsize=\xfiglen
\beginpicture
\eightrm
  \setcoordinatesystem units <0.03\xfiglen,\yfiglen>  point at 1 0.015
  \setplotarea x from 1 to 30, y from 0.015 to 0.048
  \axis bottom shiftedto y=0.015 ticks short numbered from 5 to 30 by 5 /
  \axis left ticks short numbered from 0.015 to 0.45 by 0.005 /
\footnotesize
\put {$n$} [rb] at 30 0.016
\put {$\|\kappa_{n}-\kappa_{\rm act}\|_\infty$} [lt] at 2 0.048
\eightrm
\Red{\relax  
\plot
    2.0000    0.0340
    3.0000    0.0357
    4.0000    0.0360
    5.0000    0.0356
    6.0000    0.0349
    7.0000    0.0344
    8.0000    0.0339
    9.0000    0.0335
   10.0000    0.0331
   11.0000    0.0328
   12.0000    0.0324
   13.0000    0.0321
   14.0000    0.0318
   15.0000    0.0315
   16.0000    0.0313
   17.0000    0.0310
   18.0000    0.0308
   19.0000    0.0306
   20.0000    0.0304
   21.0000    0.0302
   22.0000    0.0300
   23.0000    0.0298
   24.0000    0.0296
   25.0000    0.0294
   26.0000    0.0293
   27.0000    0.0291
   28.0000    0.0290
   29.0000    0.0288
   30.0000    0.0287
/\relax}\relax
\RoyalBlue{\relax  
\plot
    2.0000    0.0438
    3.0000    0.0406
    4.0000    0.0356
    5.0000    0.0350
    6.0000    0.0393
    7.0000    0.0382
    8.0000    0.0319
    9.0000    0.0281
   10.0000    0.0266
   11.0000    0.0249
   12.0000    0.0237
   13.0000    0.0230
   14.0000    0.0222
   15.0000    0.0213
   16.0000    0.0215
   17.0000    0.0216
   18.0000    0.0218
   19.0000    0.0218
   20.0000    0.0219
   21.0000    0.0219
   22.0000    0.0219
   23.0000    0.0219
   24.0000    0.0220
   25.0000    0.0222
   26.0000    0.0223
   27.0000    0.0225
   28.0000    0.0226
   29.0000    0.0228
   30.0000    0.0229
/\relax}\relax
\endpicture
}

\captionsetup{width=0.9\textwidth}
\captionsetup{format=hang}

\subsection{Newton iteration}\label{sect:Newton_graphs}

Figures~\oldref{fig:Newton_iteration_kappa4} and \oldref{fig:Newton_iteration_kappa6}
below show reconstructions of  two actual $\kappa$ functions:
one $C^\infty$ and the other just piecewise linear.
In each case the frozen Newton scheme was used with the leftmost graphs
for the case of a noise level of $0.1\%$ and the rightmost for $1\%$ noise.
Note the very rapid convergence of the scheme.
In the case of $1\%$ noise the scheme was stopped by the Discrepancy Principle
at the second iteration, whereas with the lower noise level a third iteration
was reached.

 \begin{figure}[h]
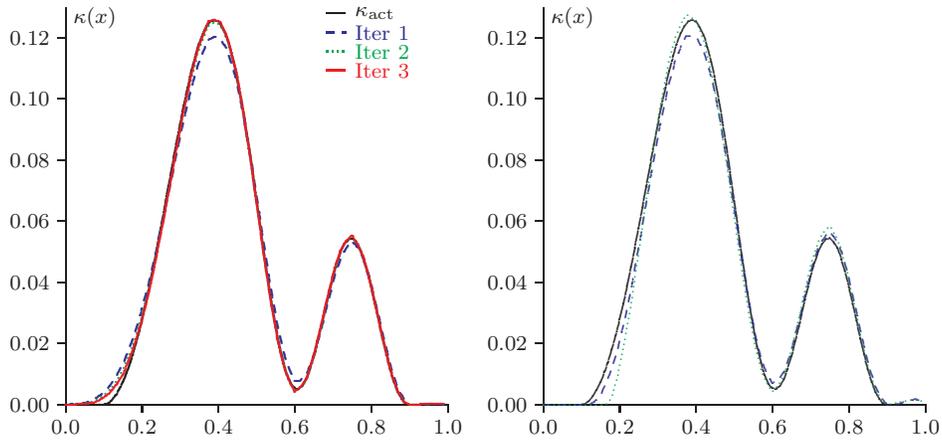

 \bigskip
 \hbox to\hsize{\hss\copy\figureone\hss\hss\copy\figuretwo\hss}
 \caption{
 Reconstructions of a smooth $\kappa(x)$ from time trace data at $\,x=1\,$
 under 0.1\%  (left) and 1\% (right) noise using Newton's method.
 }
 \label{fig:Newton_iteration_kappa4}
 \end{figure}

 \begin{figure}[h]
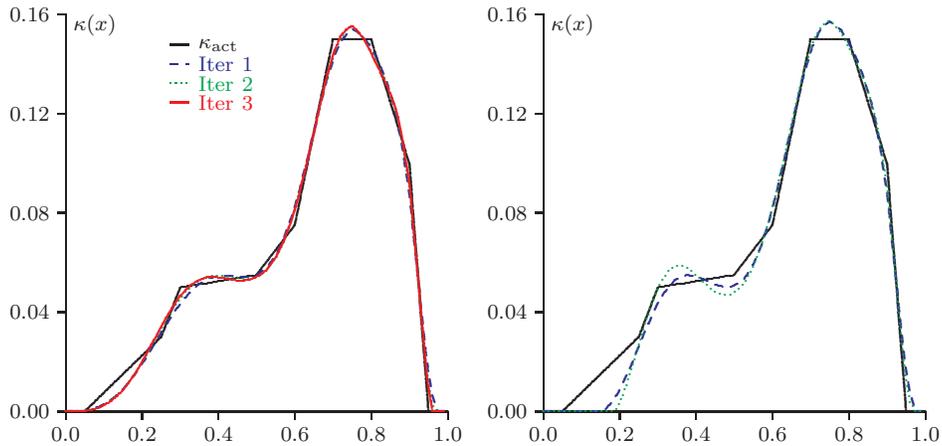

 \bigskip
 \hbox to\hsize{\hss\box\figurethree\hss\hss\box\figurefour\hss}
 \caption{
 Reconstructions of piecewise linear $\kappa(x)$ from time trace data at $\,x=1$
 under 0.1\%  (left) and 1\% (right) noise using Newton's method.
 }
 \label{fig:Newton_iteration_kappa6}
 \end{figure}

As a final example for Newton's method we show a reconstruction of a piecewise
constant function in Figure \oldref{fig:kappa9_reconstruction}.
This uses both piecewise linear or chapeau basis functions and also a
Haar piecewise constant basis.
We were careful to ensure that the basis breakpoints did not align with the
discontinuities of $\kappa$.
Clearly, the reconstruction here is much poorer but still able to follow the
significant features of the actual $\kappa(x)$ function.

 \begin{figure}[h]
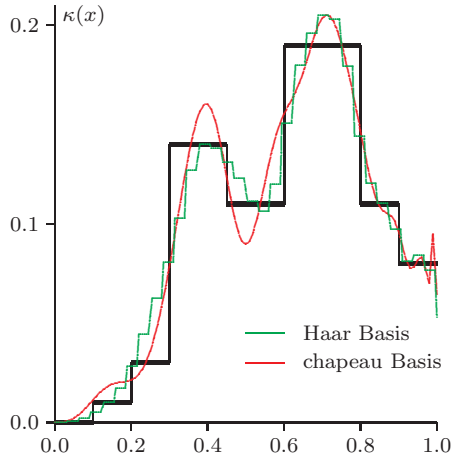

 \bigskip
 \hbox to\hsize{\hss\box\figureseven\hss}
 \caption{
 Reconstructions of a piecewise constant $\kappa(x)$ from time trace data at
$x=1$ under $0.1\%$ noise using Newton iteration.
 }
 \label{fig:kappa9_reconstruction}
 \end{figure}
\bigskip

\subsection{Halley iteration}\label{sect:Halley_graphs}

Theoretically, the frozen Newton scheme is only first order accurate and
thus it is natural to seek a higher order of convergence method.
Thus comparisons with Halley should be made here.
The frozen Halley with the added corrector step is second order and might
seem the obvious choice.
Implementation requires the computation of the Hessian and this if implemented
in the most straightforward way takes approximately $M$ times longer than
just computing the Jacobian where $M$ is the number of basis elements.
Thus, given the rapid convergence of the Newton scheme with smooth $\kappa$
the use of Halley to improve the convergence rate seems counterproductive
given what can be a considerable computational expense.
However, there is another factor to consider.
In many cases, (see for example, \cite{HeRu00}) the use of the corrector can
sometimes lead to a slightly improved reconstruction and this is
the case here as we show in Figure~\oldref{fig:Newton_Halley_compare}.
 \begin{figure}[h]
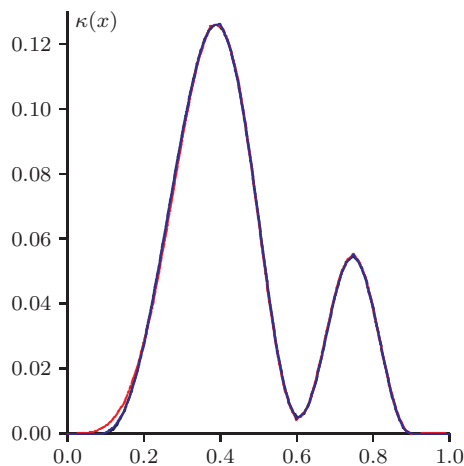

 \bigskip
 \hbox to\hsize{\hss\box\figurethirteen\hss}
 \caption{
 Comparison of Newton (in red) and Halley (in blue) final reconstructions under $0.1\%$ noise.}
 \label{fig:Newton_Halley_compare}
 \end{figure}
These two reconstructions (from the same data subject to $0.1\%$ noise)
are hardly distinguishable except for the extreme left hand endpoint
where Halley outperforms Newton.
The final errors:
$\;\|\kappa - \kappa_{\rm act}\|_{\infty}$,
$\;\|\kappa - \kappa_{\rm act}\|_{L^2}\,$
for Newton were $0.0084$ and $0.0059$ respectively.
The corresponding errors for Halley were
$0.0066$ and $0.0045$.

For this numerical run
there is a homogeneous Dirichlet condition at the left hand endpoint
and the small values of $p(x,t)$ near $x=0$ are a multiplier for the $\kappa$
to be reconstructed. With these boundary conditions reconstructions were
always worse at this endpoint.

Figure~\oldref{fig:Newton_Halley_norm_compare} below shows another
Newton-Halley comparison this time using a piecewise linear function for
$\kappa$.
Here we took Neumann boundary conditions at both endpoints and so the above
issue of the small multiplier is lessened.
Again, there is a slight improvement using Halley's method.
\medskip
 \begin{figure}[h]
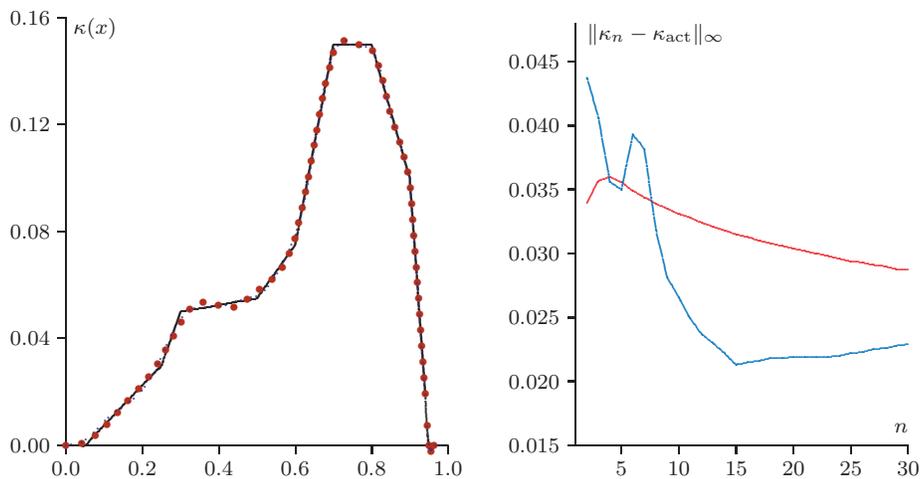

 \bigskip
 \hbox to\hsize{\hss\box\figurefourteen\hss\hss\box\figurefifteen\hss}
 \caption{
 Comparison of Newton (in red) and Halley (in blue) final reconstructions and
norm differences of the $n^{\rm th}$ iterate $\kappa_n$ and the actual $\kappa$.
Noise level was $1\%$.}
 \label{fig:Newton_Halley_norm_compare}
 \end{figure}

The rightmost figure shows the $L^\infty$ norm difference between the
computed and the actual $\kappa$ at each iteration. Several features are worthy of note.

First, both Newton and Halley schemes wander on the first  few iterations
and this is much more noticeable in the Halley.
There is no stepsize control implemented and the actual $\kappa$ is quite
far from the initial approximation $\kappa=0$.
This effect would disappear with proper stepsize control.

Second, after this initial phase the Newton progresses with approximately
linear convergence whereas the Halley decreases roughly quadratically.
In this simulation the use of the Discrepancy Principle to terminate the
iteration process was turned off and so after a certain point the noise in
the data plays a role and the norm differences start to increase again.
This point is after about 15 iterations with Halley but much later with Newton;
somewhere around 35 iterations.

Third, note that after terminating the schemes by some effective mechanism
the overall error in the Halley method is less than that of the Newton by a
factor of approximately the same amount as noted for
Figure~\oldref{fig:Newton_Halley_compare}.

Thus in summary here, the computational cost of a Halley implementation
and the need to compute the Hessian is not repaid in time to reach convergence
as opposed to Newton, but there is a relatively small but consistent advantage
of a superior reconstruction.

\subsection{Landweber iteration}\label{sect:Landweber_graphs}

As a point of reference, Figure~\oldref{fig:Landweber_iteration_kappa4}
shows the piecewise linear actual $\kappa$ reconstructed using Landweber
iteration.
After 5,000 steps the scheme was still converging so computational efficiency
is orders of magnitude below either Newton or Halley methods.

The rate was better for a smooth $\kappa$ as shown in
Figure~\oldref{fig:Landweber_iteration_kappa6} but typical of this scheme
it lags orders of magnitude behind Newton-schemes if these are applicable to
the problem.
 \begin{figure}[h]
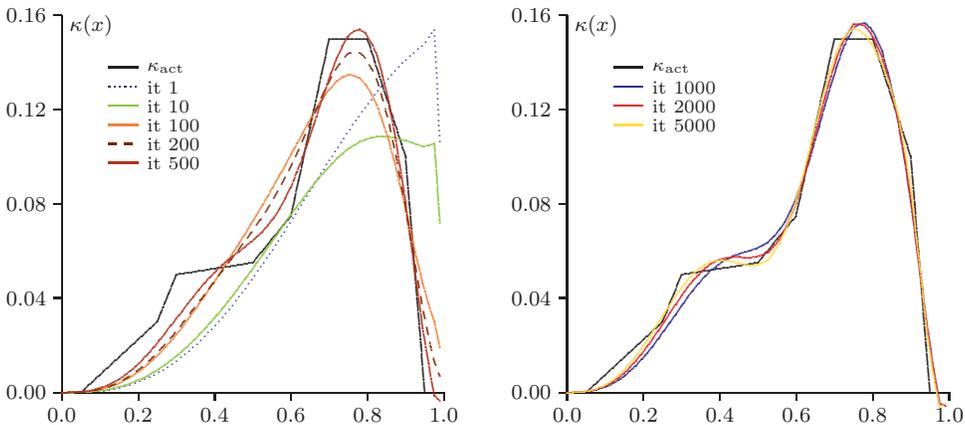

 \bigskip
 \hbox to\hsize{\hss\copy\figurefive\qquad\copy\figuresix\hss}
 \caption{
 Reconstructions of a piecewise linear $\kappa(x)$ from time trace data at $x=1$
 under $1\%$ noise using Landweber iteration.
 }
 \label{fig:Landweber_iteration_kappa4}
 \end{figure}

 \begin{figure}[h]
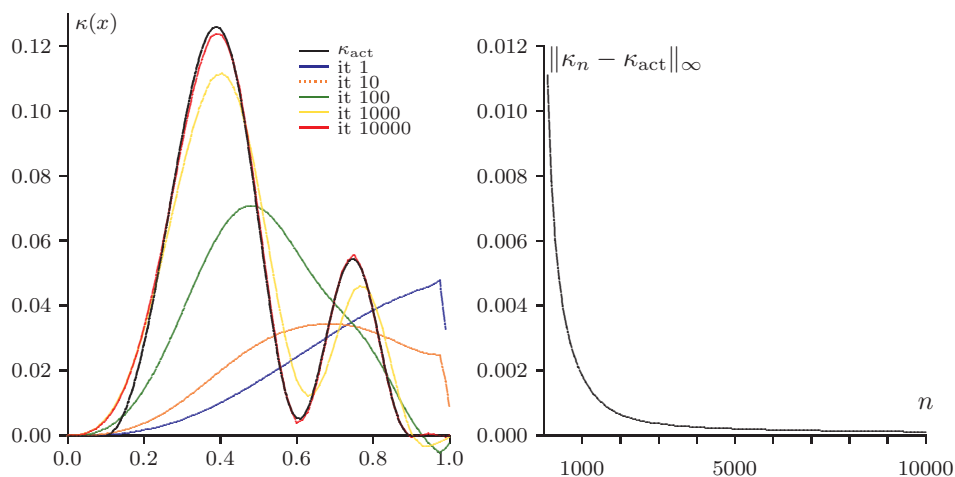

 \bigskip
 \hbox to\hsize{\hss\copy\figureten\hss\hss\copy\figuretwelve\hss}
 \caption{
The leftmost figure shows reconstructions of $\kappa(x)$
under $0.1\%$ noise using Landweber iteration.
The rightmost figure shows the decay of the norm
$\kappa_n(x)-\kappa_{\rm act}(x)$
 }
 \label{fig:Landweber_iteration_kappa6}
 \end{figure}
The main advantage to the Landweber scheme is its greater robustness against
noise than the Newton scheme and certainly so against the Halley which requires
two regularisation constants, one for each of the predictor and the corrector
steps.

As noted in Section \oldref{sec:LW}, there are acceleration methods available
for this scheme, but given the performance of the Newton and Halley schemes
shown here, these seem the methods of choice.

\section*{Acknowledgments}
The work of the first author was supported by the Austrian Science Fund {\sc fwf}
under the grants P30054 and DOC78. The work of the second author was supported
in part by the National Science Foundation through award {\sc dms}-1620138. We thank both reviewers for their careful reading of the manuscript and their detailed reports with valuable comments and suggestions that have led to an improved version of the paper.

\medskip
Received August 2020; 1st revision August 2020; 2nd revision November 2020.
\medskip

{\it E-mail address: } barbara.kaltenbacher@aau.at\\
\indent {\it E-mail address: } rundell@math.tamu.edu

\end{document}